\chardef\@x10\chardef\@xv60
\def\tcitime{
\def\@time{%
  \@minute\time\@hour\@minute\divide\@hour\@xv
  \ifnum\@hour<\@x 0\fi\the\@hour:%
  \multiply\@hour\@xv\advance\@minute-\@hour
  \ifnum\@minute<\@x 0\fi\the\@minute
  }}%
\def\x@hyperref#1#2#3{%
   \catcode`\~ = 12
   \catcode`\$ = 12
   \catcode`\_ = 12
   \catcode`\# = 12
   \catcode`\& = 12
   \y@hyperref{#1}{#2}{#3}%
}
\def\y@hyperref#1#2#3#4{%
   #2\ref{#4}#3
   \catcode`\~ = 13
   \catcode`\$ = 3
   \catcode`\_ = 8
   \catcode`\# = 6
   \catcode`\& = 4
}
\def\QCTOpt[#1]#2{%
  \def\QCTOptB{#1}
  \def\QCTOptA{#2}
}
\def\QCTNOpt#1{%
  \def\QCTOptA{#1}
  \let\QCTOptB\empty
}
\def\Qct{%
  \@ifnextchar[{%
    \QCTOpt}{\QCTNOpt}
}
\def\QCBOpt[#1]#2{%
  \def\QCBOptB{#1}%
  \def\QCBOptA{#2}%
}
\def\QCBNOpt#1{%
  \def\QCBOptA{#1}%
  \let\QCBOptB\empty
}
\def\Qcb{%
  \@ifnextchar[{%
    \QCBOpt}{\QCBNOpt}%
}
\def\PrepCapArgs{%
  \ifx\QCBOptA\empty
    \ifx\QCTOptA\empty
      {}%
    \else
      \ifx\QCTOptB\empty
        {\QCTOptA}%
      \else
        [\QCTOptB]{\QCTOptA}%
      \fi
    \fi
  \else
    \ifx\QCBOptA\empty
      {}%
    \else
      \ifx\QCBOptB\empty
        {\QCBOptA}%
      \else
        [\QCBOptB]{\QCBOptA}%
      \fi
    \fi
  \fi
}
\def\GRAPHICSPS#1{%
 \ifcase\GRAPHICSTYPE
   \special{ps: #1}%
 \or
   \special{language "PS", include "#1"}%
 \fi
}%
\def\graffile#1#2#3#4{%
    \bgroup
       \@inlabelfalse
       \leavevmode
       \@ifundefined{bbl@deactivate}{\def~{\string~}}{\activesoff}%
        \raise -#4 \BOXTHEFRAME{%
           \hbox to #2{\raise #3\hbox to #2{\null #1\hfil}}}%
    \egroup
}%
\def\draftbox#1#2#3#4{%
 \leavevmode\raise -#4 \hbox{%
  \frame{\rlap{\protect\tiny #1}\hbox to #2%
   {\vrule height#3 width\z@ depth\z@\hfil}%
  }%
 }%
}%
\let\nographics=\@msidraft
\newif\ifwasdraft
\def\GRAPHIC#1#2#3#4#5{%
   \ifnum\@msidraft=\@ne\draftbox{#2}{#3}{#4}{#5}%
   \else\graffile{#1}{#3}{#4}{#5}%
   \fi
}
\def\addtoLaTeXparams#1{%
    \edef\LaTeXparams{\LaTeXparams #1}}%
\newif\ifBoxFrame \BoxFramefalse
\newif\ifOverFrame \OverFramefalse
\newif\ifUnderFrame \UnderFramefalse
\def\BOXTHEFRAME#1{%
   \hbox{%
      \ifBoxFrame
         \frame{#1}%
      \else
         {#1}%
      \fi
   }%
}
\def\doFRAMEparams#1{\BoxFramefalse\OverFramefalse\UnderFramefalse\readFRAMEparams#1\end}%
\def\readFRAMEparams#1{%
 \ifx#1\end%
  \let\next=\relax
  \else
  \ifx#1i\dispkind=\z@\fi
  \ifx#1d\dispkind=\@ne\fi
  \ifx#1f\dispkind=\tw@\fi
  \ifx#1t\addtoLaTeXparams{t}\fi
  \ifx#1b\addtoLaTeXparams{b}\fi
  \ifx#1p\addtoLaTeXparams{p}\fi
  \ifx#1h\addtoLaTeXparams{h}\fi
  \ifx#1X\BoxFrametrue\fi
  \ifx#1O\OverFrametrue\fi
  \ifx#1U\UnderFrametrue\fi
  \ifx#1w
    \ifnum\@msidraft=1\wasdrafttrue\else\wasdraftfalse\fi
    \@msidraft=\@ne
  \fi
  \let\next=\readFRAMEparams
  \fi
 \next
 }%
\def\IFRAME#1#2#3#4#5#6{%
      \bgroup
      \let\QCTOptA\empty
      \let\QCTOptB\empty
      \let\QCBOptA\empty
      \let\QCBOptB\empty
      #6%
      \parindent=0pt
      \leftskip=0pt
      \rightskip=0pt
      \setbox0=\hbox{\QCBOptA}%
      \@tempdima=#1\relax
      \ifOverFrame
          \typeout{This is not implemented yet}%
          \show\HELP
      \else
         \ifdim\wd0>\@tempdima
            \advance\@tempdima by \@tempdima
            \ifdim\wd0 >\@tempdima
               \setbox1 =\vbox{%
                  \unskip\hbox to \@tempdima{\hfill\GRAPHIC{#5}{#4}{#1}{#2}{#3}\hfill}%
                  \unskip\hbox to \@tempdima{\parbox[b]{\@tempdima}{\QCBOptA}}%
               }%
               \wd1=\@tempdima
            \else
               \textwidth=\wd0
               \setbox1 =\vbox{%
                 \noindent\hbox to \wd0{\hfill\GRAPHIC{#5}{#4}{#1}{#2}{#3}\hfill}\\%
                 \noindent\hbox{\QCBOptA}%
               }%
               \wd1=\wd0
            \fi
         \else
            \ifdim\wd0>0pt
              \hsize=\@tempdima
              \setbox1=\vbox{%
                \unskip\GRAPHIC{#5}{#4}{#1}{#2}{0pt}%
                \break
                \unskip\hbox to \@tempdima{\hfill \QCBOptA\hfill}%
              }%
              \wd1=\@tempdima
           \else
              \hsize=\@tempdima
              \setbox1=\vbox{%
                \unskip\GRAPHIC{#5}{#4}{#1}{#2}{0pt}%
              }%
              \wd1=\@tempdima
           \fi
         \fi
         \@tempdimb=\ht1
         \advance\@tempdimb by -#2
         \advance\@tempdimb by #3
         \leavevmode
         \raise -\@tempdimb \hbox{\box1}%
      \fi
      \egroup%
}%
\def\DFRAME#1#2#3#4#5{%
  \hfil\break
  \bgroup
     \leftskip\@flushglue
     \rightskip\@flushglue
     \parindent\z@
     \parfillskip\z@skip
     \let\QCTOptA\empty
     \let\QCTOptB\empty
     \let\QCBOptA\empty
     \let\QCBOptB\empty
     \vbox\bgroup
        \ifOverFrame
           #5\QCTOptA\par
        \fi
        \GRAPHIC{#4}{#3}{#1}{#2}{\z@}%
        \ifUnderFrame
           \break#5\QCBOptA
        \fi
     \egroup
   \egroup
   \break
}%
\def\FFRAME#1#2#3#4#5#6#7{%
  \@ifundefined{floatstyle}
    {
     \begin{figure}[#1]%
    }
    {
     \ifx#1h
      \begin{figure}[H]%
     \else
      \begin{figure}[#1]%
     \fi
    }
  \let\QCTOptA\empty
  \let\QCTOptB\empty
  \let\QCBOptA\empty
  \let\QCBOptB\empty
  \ifOverFrame
    #4
    \ifx\QCTOptA\empty
    \else
      \ifx\QCTOptB\empty
        \caption{\QCTOptA}%
      \else
        \caption[\QCTOptB]{\QCTOptA}%
      \fi
    \fi
    \ifUnderFrame\else
      \label{#5}%
    \fi
  \else
    \UnderFrametrue%
  \fi
  \begin{center}\GRAPHIC{#7}{#6}{#2}{#3}{\z@}\end{center}%
  \ifUnderFrame
    #4
    \ifx\QCBOptA\empty
      \caption{}%
    \else
      \ifx\QCBOptB\empty
        \caption{\QCBOptA}%
      \else
        \caption[\QCBOptB]{\QCBOptA}%
      \fi
    \fi
    \label{#5}%
  \fi
  \end{figure}%
 }%
\def\makeactives{
  \catcode`\"=\active
  \catcode`\;=\active
  \catcode`\:=\active
  \catcode`\'=\active
  \catcode`\~=\active
}
   \gdef\activesoff{%
      \def"{\string"}%
      \def;{\string;}%
      \def:{\string:}%
      \def'{\string'}%
      \def~{\string~}%
    }
\def\FRAME#1#2#3#4#5#6#7#8{%
 \bgroup
 \ifnum\@msidraft=\@ne
   \wasdrafttrue
 \else
   \wasdraftfalse%
 \fi
 \def\LaTeXparams{}%
 \dispkind=\z@
 \def\LaTeXparams{}%
 \doFRAMEparams{#1}%
 \ifnum\dispkind=\z@\IFRAME{#2}{#3}{#4}{#7}{#8}{#5}\else
  \ifnum\dispkind=\@ne\DFRAME{#2}{#3}{#7}{#8}{#5}\else
   \ifnum\dispkind=\tw@
    \edef\@tempa{\noexpand\FFRAME{\LaTeXparams}}%
    \@tempa{#2}{#3}{#5}{#6}{#7}{#8}%
    \fi
   \fi
  \fi
  \ifwasdraft\@msidraft=1\else\@msidraft=0\fi{}%
  \egroup
 }%
\def\TEXUX#1{"texux"}
\long\def\QQQ#1#2{%
     \long\expandafter\def\csname#1\endcsname{#2}}%
\long\def\QQA#1#2{}%
\def\QTR#1#2{{\csname#1\endcsname {#2}}}%
\def\EXPAND#1[#2]#3{}%
\def\NOEXPAND#1[#2]#3{}%
\def\LaTeXparent#1{}%
\def\ChildStyles#1{}%
\def\ChildDefaults#1{}%
\def\QTagDef#1#2#3{}%
  \providecommand{\UNICODE}[2][]{\protect\rule{.1in}{.1in}}
  \providecommand{\U}[1]{\protect\rule{.1in}{.1in}}
\def\QQfnmark#1{\footnotemark}
 \def\abstract{%
  \if@twocolumn
   \section*{Abstract (Not appropriate in this style!)}%
   \else \small
   \begin{center}{\bf Abstract\vspace{-.5em}\vspace{\z@}}\end{center}%
   \quotation
   \fi
  }%
   \def\registered{\relax\ifmmode{}\r@gistered
                    \else$\m@th\r@gistered$\fi}%
 \def\r@gistered{^{\ooalign
  {\hfil\raise.07ex\hbox{$\scriptstyle\rm\text{R}$}\hfil\crcr
  \mathhexbox20D}}}}{}%
\newdimen\theight
\def\newfmtname{LaTeX2e}
  \DeclareOldFontCommand{\rm}{\normalfont\rmfamily}{\mathrm}
  \DeclareOldFontCommand{\sf}{\normalfont\sffamily}{\mathsf}
  \DeclareOldFontCommand{\tt}{\normalfont\ttfamily}{\mathtt}
  \DeclareOldFontCommand{\bf}{\normalfont\bfseries}{\mathbf}
  \DeclareOldFontCommand{\it}{\normalfont\itshape}{\mathit}
  \DeclareOldFontCommand{\sl}{\normalfont\slshape}{\@nomath\sl}
  \DeclareOldFontCommand{\sc}{\normalfont\scshape}{\@nomath\sc}
\def\alpha{{\Greekmath 010B}}%
\def\beta{{\Greekmath 010C}}%
\def\gamma{{\Greekmath 010D}}%
\def\delta{{\Greekmath 010E}}%
\def\epsilon{{\Greekmath 010F}}%
\def\zeta{{\Greekmath 0110}}%
\def\eta{{\Greekmath 0111}}%
\def\theta{{\Greekmath 0112}}%
\def\iota{{\Greekmath 0113}}%
\def\kappa{{\Greekmath 0114}}%
\def\lambda{{\Greekmath 0115}}%
\def\mu{{\Greekmath 0116}}%
\def\nu{{\Greekmath 0117}}%
\def\xi{{\Greekmath 0118}}%
\def\pi{{\Greekmath 0119}}%
\def\rho{{\Greekmath 011A}}%
\def\sigma{{\Greekmath 011B}}%
\def\tau{{\Greekmath 011C}}%
\def\upsilon{{\Greekmath 011D}}%
\def\phi{{\Greekmath 011E}}%
\def\chi{{\Greekmath 011F}}%
\def\psi{{\Greekmath 0120}}%
\def\omega{{\Greekmath 0121}}%
\def\varepsilon{{\Greekmath 0122}}%
\def\vartheta{{\Greekmath 0123}}%
\def\varpi{{\Greekmath 0124}}%
\def\varrho{{\Greekmath 0125}}%
\def\varsigma{{\Greekmath 0126}}%
\def\varphi{{\Greekmath 0127}}%
\def\nabla{{\Greekmath 0272}}
\def\FindBoldGroup{%
   {\setbox0=\hbox{$\mathbf{x\global\edef\theboldgroup{\the\mathgroup}}$}}%
}
\def\Greekmath#1#2#3#4{%
    \if@compatibility
        \ifnum\mathgroup=\symbold
           \mathchoice{\mbox{\boldmath$\displaystyle\mathchar"#1#2#3#4$}}%
                      {\mbox{\boldmath$\textstyle\mathchar"#1#2#3#4$}}%
                      {\mbox{\boldmath$\scriptstyle\mathchar"#1#2#3#4$}}%
                      {\mbox{\boldmath$\scriptscriptstyle\mathchar"#1#2#3#4$}}%
        \else
           \mathchar"#1#2#3#4%
        \fi
    \else
        \FindBoldGroup
        \ifnum\mathgroup=\theboldgroup 
           \mathchoice{\mbox{\boldmath$\displaystyle\mathchar"#1#2#3#4$}}%
                      {\mbox{\boldmath$\textstyle\mathchar"#1#2#3#4$}}%
                      {\mbox{\boldmath$\scriptstyle\mathchar"#1#2#3#4$}}%
                      {\mbox{\boldmath$\scriptscriptstyle\mathchar"#1#2#3#4$}}%
        \else
           \mathchar"#1#2#3#4%
        \fi
      \fi}
\newif\ifGreekBold  \GreekBoldfalse
\let\SAVEPBF=\pbf
\def\pbf{\GreekBoldtrue\SAVEPBF}%
  \newcounter{equationnumber}
  \def\mathletters{%
     \addtocounter{equation}{1}
     \edef\@currentlabel{\theequation}%
     \setcounter{equationnumber}{\c@equation}
     \setcounter{equation}{0}%
     \edef\theequation{\@currentlabel\noexpand\alph{equation}}%
  }
    \def\BibTeX{{\rm B\kern-.05em{\sc i\kern-.025em b}\kern-.08em
                 T\kern-.1667em\lower.7ex\hbox{E}\kern-.125emX}}}{}%
\def\AmS{{\protect\usefont{OMS}{cmsy}{m}{n}%
                A\kern-.1667em\lower.5ex\hbox{M}\kern-.125emS}}}{}%
\def\@@eqncr{\let\@tempa\relax
    \ifcase\@eqcnt \def\@tempa{& & &}\or \def\@tempa{& &}%
      \else \def\@tempa{&}\fi
     \@tempa
     \if@eqnsw
        \iftag@
           \@taggnum
        \else
           \@eqnnum\stepcounter{equation}%
        \fi
     \fi
     \global\tag@false
     \global\@eqnswtrue
     \global\@eqcnt\z@\cr}
\def\TCItag{\@ifnextchar*{\@TCItagstar}{\@TCItag}}
\def\@TCItag#1{%
    \global\tag@true
    \global\def\@taggnum{(#1)}}
\def\@TCItagstar*#1{%
    \global\tag@true
    \global\def\@taggnum{#1}}
\def\ExitTCILatex{\makeatother }
\let\DOTSI\relax
\def\RIfM@{\relax\ifmmode}%
\def\FN@{\futurelet\next}%
\def\iint{\DOTSI\intno@\tw@\FN@\ints@}%
\def\iiint{\DOTSI\intno@\thr@@\FN@\ints@}%
\def\iiiint{\DOTSI\intno@4 \FN@\ints@}%
\def\idotsint{\DOTSI\intno@\z@\FN@\ints@}%
\def\ints@{\findlimits@\ints@@}%
\newif\iflimtoken@
\newif\iflimits@
\def\findlimits@{\limtoken@true\ifx\next\limits\limits@true
 \else\ifx\next\nolimits\limits@false\else
 \limtoken@false\ifx\ilimits@\nolimits\limits@false\else
 \ifinner\limits@false\else\limits@true\fi\fi\fi\fi}%
\def\multint@{\int\ifnum\intno@=\z@\intdots@                          
 \else\intkern@\fi                                                    
 \ifnum\intno@>\tw@\int\intkern@\fi                                   
 \ifnum\intno@>\thr@@\int\intkern@\fi                                 
 \int}
\def\multintlimits@{\intop\ifnum\intno@=\z@\intdots@\else\intkern@\fi
 \ifnum\intno@>\tw@\intop\intkern@\fi
 \ifnum\intno@>\thr@@\intop\intkern@\fi\intop}%
\def\intic@{%
    \mathchoice{\hskip.5em}{\hskip.4em}{\hskip.4em}{\hskip.4em}}%
\def\negintic@{\mathchoice
 {\hskip-.5em}{\hskip-.4em}{\hskip-.4em}{\hskip-.4em}}%
\def\ints@@{\iflimtoken@                                              
 \def\ints@@@{\iflimits@\negintic@
   \mathop{\intic@\multintlimits@}\limits                             
  \else\multint@\nolimits\fi                                          
  \eat@}
 \else                                                                
 \def\ints@@@{\iflimits@\negintic@
  \mathop{\intic@\multintlimits@}\limits\else
  \multint@\nolimits\fi}\fi\ints@@@}%
\def\intkern@{\mathchoice{\!\!\!}{\!\!}{\!\!}{\!\!}}%
\def\plaincdots@{\mathinner{\cdotp\cdotp\cdotp}}%
\def\intdots@{\mathchoice{\plaincdots@}%
 {{\cdotp}\mkern1.5mu{\cdotp}\mkern1.5mu{\cdotp}}%
 {{\cdotp}\mkern1mu{\cdotp}\mkern1mu{\cdotp}}%
 {{\cdotp}\mkern1mu{\cdotp}\mkern1mu{\cdotp}}}%
\def\RIfM@{\relax\protect\ifmmode}
\def\text{\RIfM@\expandafter\text@\else\expandafter\mbox\fi}
\let\nfss@text\text
\def\text@#1{\mathchoice
   {\textdef@\displaystyle\f@size{#1}}%
   {\textdef@\textstyle\tf@size{\firstchoice@false #1}}%
   {\textdef@\textstyle\sf@size{\firstchoice@false #1}}%
   {\textdef@\textstyle \ssf@size{\firstchoice@false #1}}%
   \glb@settings}
\def\textdef@#1#2#3{\hbox{{%
                    \everymath{#1}%
                    \let\f@size#2\selectfont
                    #3}}}
\newif\iffirstchoice@
\def\Let@{\relax\iffalse{\fi\let\\=\cr\iffalse}\fi}%
\def\vspace@{\def\vspace##1{\crcr\noalign{\vskip##1\relax}}}%
\def\multilimits@{\bgroup\vspace@\Let@
 \baselineskip\fontdimen10 \scriptfont\tw@
 \advance\baselineskip\fontdimen12 \scriptfont\tw@
 \lineskip\thr@@\fontdimen8 \scriptfont\thr@@
 \lineskiplimit\lineskip
 \vbox\bgroup\ialign\bgroup\hfil$\m@th\scriptstyle{##}$\hfil\crcr}%
\def\Sb{_\multilimits@}%
\def\endSb{\crcr\egroup\egroup\egroup}%
\def\Sp{^\multilimits@}%
\newdimen\ex@
\def\rightarrowfill@#1{$#1\m@th\mathord-\mkern-6mu\cleaders
 \hbox{$#1\mkern-2mu\mathord-\mkern-2mu$}\hfill
 \mkern-6mu\mathord\rightarrow$}%
\def\leftarrowfill@#1{$#1\m@th\mathord\leftarrow\mkern-6mu\cleaders
 \hbox{$#1\mkern-2mu\mathord-\mkern-2mu$}\hfill\mkern-6mu\mathord-$}%
\def\leftrightarrowfill@#1{$#1\m@th\mathord\leftarrow
\mkern-6mu\cleaders
 \hbox{$#1\mkern-2mu\mathord-\mkern-2mu$}\hfill
 \mkern-6mu\mathord\rightarrow$}%
\def\overrightarrow{\mathpalette\overrightarrow@}%
\def\overrightarrow@#1#2{\vbox{\ialign{##\crcr\rightarrowfill@#1\crcr
 \noalign{\kern-\ex@\nointerlineskip}$\m@th\hfil#1#2\hfil$\crcr}}}%
\def\overleftarrow{\mathpalette\overleftarrow@}%
\def\overleftarrow@#1#2{\vbox{\ialign{##\crcr\leftarrowfill@#1\crcr
 \noalign{\kern-\ex@\nointerlineskip}$\m@th\hfil#1#2\hfil$\crcr}}}%
\def\overleftrightarrow{\mathpalette\overleftrightarrow@}%
\def\overleftrightarrow@#1#2{\vbox{\ialign{##\crcr
   \leftrightarrowfill@#1\crcr
 \noalign{\kern-\ex@\nointerlineskip}$\m@th\hfil#1#2\hfil$\crcr}}}%
\def\underrightarrow{\mathpalette\underrightarrow@}%
\def\underrightarrow@#1#2{\vtop{\ialign{##\crcr$\m@th\hfil#1#2\hfil
  $\crcr\noalign{\nointerlineskip}\rightarrowfill@#1\crcr}}}%
\def\underleftarrow{\mathpalette\underleftarrow@}%
\def\underleftarrow@#1#2{\vtop{\ialign{##\crcr$\m@th\hfil#1#2\hfil
  $\crcr\noalign{\nointerlineskip}\leftarrowfill@#1\crcr}}}%
\def\underleftrightarrow{\mathpalette\underleftrightarrow@}%
\def\underleftrightarrow@#1#2{\vtop{\ialign{##\crcr$\m@th
  \hfil#1#2\hfil$\crcr
 \noalign{\nointerlineskip}\leftrightarrowfill@#1\crcr}}}%
\def\qopnamewl@#1{\mathop{\operator@font#1}\nlimits@}
\let\nlimits@\displaylimits
\def\setboxz@h{\setbox\z@\hbox}
\def\varlim@#1#2{\mathop{\vtop{\ialign{##\crcr
 \hfil$#1\m@th\operator@font lim$\hfil\crcr
 \noalign{\nointerlineskip}#2#1\crcr
 \noalign{\nointerlineskip\kern-\ex@}\crcr}}}}
 \def\rightarrowfill@#1{\m@th\setboxz@h{$#1-$}\ht\z@\z@
  $#1\copy\z@\mkern-6mu\cleaders
  \hbox{$#1\mkern-2mu\box\z@\mkern-2mu$}\hfill
  \mkern-6mu\mathord\rightarrow$}
\def\leftarrowfill@#1{\m@th\setboxz@h{$#1-$}\ht\z@\z@
  $#1\mathord\leftarrow\mkern-6mu\cleaders
  \hbox{$#1\mkern-2mu\copy\z@\mkern-2mu$}\hfill
  \mkern-6mu\box\z@$}
\def\projlim{\qopnamewl@{proj\,lim}}
\def\injlim{\qopnamewl@{inj\,lim}}
\def\varinjlim{\mathpalette\varlim@\rightarrowfill@}
\def\varprojlim{\mathpalette\varlim@\leftarrowfill@}
\def\varliminf{\mathpalette\varliminf@{}}
\def\varliminf@#1{\mathop{\underline{\vrule\@depth.2\ex@\@width\z@
   \hbox{$#1\m@th\operator@font lim$}}}}
\def\varlimsup{\mathpalette\varlimsup@{}}
\def\varlimsup@#1{\mathop{\overline
  {\hbox{$#1\m@th\operator@font lim$}}}}
\def\align{\@verbatim \frenchspacing\@vobeyspaces \@alignverbatim
You are using the "align" environment in a style in which it is not defined.}
\let\csname endalign*\endcsname =\endtrivlist
\def\alignat{\@verbatim \frenchspacing\@vobeyspaces \@alignatverbatim
You are using the "alignat" environment in a style in which it is not defined.}
\let\csname endalignat*\endcsname =\endtrivlist
\def\xalignat{\@verbatim \frenchspacing\@vobeyspaces \@xalignatverbatim
You are using the "xalignat" environment in a style in which it is not defined.}
\let\csname endxalignat*\endcsname =\endtrivlist
\def\gather{\@verbatim \frenchspacing\@vobeyspaces \@gatherverbatim
You are using the "gather" environment in a style in which it is not defined.}
\let\csname endgather*\endcsname =\endtrivlist
\def\multiline{\@verbatim \frenchspacing\@vobeyspaces \@multilineverbatim
You are using the "multiline" environment in a style in which it is not defined.}
\let\csname endmultiline*\endcsname =\endtrivlist
\def\arrax{\@verbatim \frenchspacing\@vobeyspaces \@arraxverbatim
You are using a type of "array" construct that is only allowed in AmS-LaTeX.}
\def\tabulax{\@verbatim \frenchspacing\@vobeyspaces \@tabulaxverbatim
You are using a type of "tabular" construct that is only allowed in AmS-LaTeX.}
\let\csname endarrax*\endcsname =\endtrivlist
\let\csname endtabulax*\endcsname =\endtrivlist
 \def\endequation{%
     \ifmmode\ifinner 
      \iftag@
        \addtocounter{equation}{-1} 
        $\hfil
           \displaywidth\linewidth\@taggnum\egroup \endtrivlist
        \global\tag@false
        \global\@ignoretrue
      \else
        $\hfil
           \displaywidth\linewidth\@eqnnum\egroup \endtrivlist
        \global\tag@false
        \global\@ignoretrue
      \fi
     \else
      \iftag@
        \addtocounter{equation}{-1} 
        \eqno \hbox{\@taggnum}
        \global\tag@false%
        $$\global\@ignoretrue
      \else
        \eqno \hbox{\@eqnnum}
        $$\global\@ignoretrue
      \fi
     \fi\fi
 }
 \newif\iftag@ \tag@false
 \def\TCItag{\@ifnextchar*{\@TCItagstar}{\@TCItag}}
 \def\@TCItag#1{%
     \global\tag@true
     \global\def\@taggnum{(#1)}}
 \def\@TCItagstar*#1{%
     \global\tag@true
     \global\def\@taggnum{#1}}
     \def\tag{\@ifnextchar*{\@tagstar}{\@tag}}
     \def\@tag#1{%
         \global\tag@true
         \global\def\@taggnum{(#1)}}
     \def\@tagstar*#1{%
         \global\tag@true
         \global\def\@taggnum{#1}}
\def\tfrac#1#2{{\textstyle {#1 \over #2}}}%
\begin{document}

\title{A rough path perspective on renormalization}

\author[1]{Y. Bruned}
\author[2]{I. Chevyrev}
\author[3,4]{P.K. Friz}
\author[3]{R. Prei\ss}
\affil[1]{{School of Mathematics, University of Edinburgh}}
\affil[2]{Mathematical Institute,
University of Oxford}
\affil[3]{{Institut f\"ur Mathematik, Technische Universit\"at Berlin}}
\affil[4]{{Weierstra\ss --Institut f\"ur Angewandte Analysis und Stochastik, Berlin}}

\date{\today}
%
%

\maketitle

\begin{abstract}
We develop the algebraic theory of rough path translation. Particular attention is given to the case of branched rough paths, whose underlying algebraic structure (Connes-Kreimer, Grossman-Larson) makes it a useful model case of a regularity structure in the sense of Hairer. Pre-Lie structures are seen to play a fundamental rule which allow a direct understanding of the translated (i.e. renormalized) equation under consideration.  This construction is also novel with regard to the algebraic renormalization theory for regularity structures due to Bruned--Hairer--Zambotti (2016), the links with which are discussed in detail.
\end{abstract}

\tableofcontents

\section{Introduction}

\subsection{Rough paths and regularity structures}

The theory of rough paths~\cite{Lyons98} deals with controlled
differential equations of the form
\[
dY_{t}= { f_0 (Y_t) dt +}  \sum_{i=1}^{d}f_{i}\left( Y_{t}\right) dX_{t}^{i} 
\ \ Y_0 = y_0 \in \R^e \, .
\]
with $(X^1,...,X^d) :\left[ 0,T\right] \rightarrow \mathbb{R}^{d}$, of
of low, say $\alpha $-H\"{o}lder, regularity for $0<\alpha \leq 1$. 
As may
be seen by formal Picard iteration, { given a collection $f_0,f_1,...,f_d$ of nice vector fields on $\R^e$,} the solution can be expanded in terms of
certain integrals. Assuming validity of the chain-rule, and writing $X^0(t) \equiv t$ for notational convenience,
these are just
iterated integrals of the form $\int dX^{i_{1}}\cdots dX^{i_{n}}$ with
integration over $n$-dimensional simplex. In geometric rough path theory
one postulates the existence of such integrals, for sufficiently many words $w=\left( i_{1},\dots ,i_{n}\right)$, namely $\left\vert w\right\vert =n\leq\left[ 1/\alpha \right] $, such as to regain analytic control: the
collection of resulting objects
\[
\left\langle \mathbf{X},w\right\rangle =\int \dots \int dX^{i_{1}}\dots
dX^{i_{n}}\text{ (integration over }s<t_{1}<\dots <t_{n}<t\text{, for all }
0\leq s<t\leq T\text{)}
\]
subject to suitable analytic and algebraic constraints (in particular, \textit{Chen's relation}, which describes the recentering $s\rightarrow \tilde{s}$ )
is then known as a (level-$n$) \textit{geometric rough path}, introduced by \cite{Lyons98}.
For the readers convenience we give some precise recalls, along the lines of Hairer--Kelly \cite{HairerKelly15}, in Section \ref{sec:GNG} below.
Without assuming a chain-rule (think:\
It\^o), iterated integrals of the form $\int X^{i}X^{j}dX^{k}$ appear in the expansion,
the resulting objects are then naturally indexed by trees, for example
\[
\left\langle \mathbf{X},\tau \right\rangle =\int X^{i}X^{j}dX^{k}\text{ with 
}\tau =\left[ \bullet_i \bullet_j \right] _{\bullet_k} \equiv 
\tikzexternaldisable  \begin{tikzpicture}[scale=0.2,baseline=0.1cm]
        \node at (0,0)  [dot,label= {[label distance=-0.2em]below: \scriptsize  $ k $} ] (root) {};
         \node at (1,2)  [dot,label={[label distance=-0.2em]above: \scriptsize  $ j $}] (right) {};
         \node at (-1,2)  [dot,label={[label distance=-0.2em]above: \scriptsize  $ i $} ] (left) {};
            \draw[kernel1] (right) to
     node [sloped,below] {\small }     (root); \draw[kernel1] (left) to
     node [sloped,below] {\small }     (root);
     \end{tikzpicture} \tikzexternaldisable .
\]
The collection of all such objects, again for sufficiently many trees, $\left\vert \tau \right\vert =\#$nodes $\leq \left[ 1/\alpha \right] $ and
subject to algebraic and analytic constraints, form what is known as
a {\it branched rough path} \cite{Gubinelli10, HairerKelly15}. 
Here again, we refer to Section \ref{sec:GNG} for a precise definition and further recalls.

\medskip

A basic result - known as the \textit{extension theorem} \cite{Lyons98, Gubinelli10} - asserts that
all ``higher'' iterated integrals, $n$-fold with $n>\left[ 1/\alpha \right] $,
are automatically well-defined, with validity of all algebraic and analytic
constraints in the extended setting.\footnote{This entire ensemble of iterated integrals is called the {\it signature} or the {\it fully lifted rough path}.} Solving differential equations driven
by such rough paths can then be achieved, following
\cite{Gubinelli04}, see also \cite{FrizHairer14}, by formulating a fixed point problem in a space
of controlled rough paths, essentially a (linear) space of good integrands
for rough integration (mind that rough path spaces are, in contrast,
fundamentally non-linear due to the afore-mentioned algebraic constraints).
Given a rough differential equation (RDE) of the form
\[
dY= f_0(Y)dt + f\left( Y\right) d\mathbf{X}
\]
it is interesting to see the effect on $Y$ induced by higher-order
perturbations (``translations'') of the driving rough path $\mathbf{X}$. For instance, one can use It\^{o} integration
to lift a $d$-dimensional Brownian motion $(B^1,...,B^d)$ to a (level-2) random rough path, $\mathbf{X}=\mathbf{B}^{\Ito}\left( \omega \right) $ of regularity $\alpha \in \left(
1/3,1/2\right) $, in which case the above RDE corresponds to the classical It{\^o}
SDE { 
$$
dY_{t}= f_0 (Y_t) dt +  \sum_{i=1}^{d}f_{i}\left( Y_{t}\right) dB_{t}^{i} \, ,
\ \ Y_0 = y_0 \in \R^e \, .
$$ }
However, we may perturb $\mathbf{B}^{\Ito}=\left( B,\mathbb{B}
^{\Ito}\right) $ via $\mathbb{B}_{s,t}^{\Ito}\mapsto \mathbb{B}_{s,t}^{\Ito}+
\frac{1}{2}I\left( t-s\right) =:\mathbb{B}_{s,t}^{\Strat}$, without touching
the underlying Brownian path $B$. The above RDE then becomes a Stratonovich
SDE. { On the level of the original (It\^o)-equation, the effect of this perturbation is 
a modified drift vector field, 
$$
             f_0   \rightsquigarrow f_0 + \frac{1}{2} \sum_{i=1}^{d}\nabla _{f_{i}}f_{i} \ , 
$$
famously known as It{\^o}-Stratonovich correction.}
Examples from physics (e.g. Brownian motion
in a magnetic field) suggest second order perturbation of the form $\mathbb{B}_{s,t}^{\Strat}\mapsto \mathbb{B}_{s,t}^{\Strat}+a\left( t-s\right) $, for
some $a\in \mathfrak{so}\left( d\right) $, the SDE is then affected by a drift correction of the form
$$
             f_0   \rightsquigarrow f_0 + \sum_{i,j}a^{ij}\left[ f_{i},f_{j}\right] \ .
$$
In the context of classical SDEs, area corrections are also discussed in~\cite{ikeda1989stochastic}, and carefully designed twisted Wong--Zakai type approximations led Sussmann~\cite{Sussmann91} to drift corrections involving higher iterated Lie brackets.
This was reconciled with geometric rough path theory in~\cite{FrizOberhauser09}, and provides a nice example where (Brownian) rough paths (with $\gamma=\frac12-$ regularity) need to be studied in the {\it entire} scale of different rough path topologies indexed by $\gamma \in (0,1/2)$. 

As we shall see, all these examples are but the tip of an iceberg.
It will also be seen that there is a substantial difference between the geometric rough path case and the generality aimed for in this paper.

We finally note that tampering with ``higher-levels'' of the lifted noise also affects the structure of stochastic partial differential equations: this is not only omnipresent in the case of singular SPDEs~\cite{Hairer14, GIP15}, but very much in every SPDE with rough path noise as remarked e.g. in~\cite{CFO11}.

\bigskip

\textbf{From rough paths to regularity structures.} The theory of
regularity structures~\cite{Hairer14} extends rough path theory and then provides a
remarkable framework to analyse (singular) semi-linear stochastic partial differential
equations, e.g. of the form
\[
\left( \partial _{t}-\Delta \right) u=f\left( u,Du\right) +g\left( u\right)
\xi \left( t,x,\omega \right) 
\]
with $\left( d+1\right) $-dimensional space-time white noise. The demarche
is similar as above: noise is lifted to a {\it model}, whose algebraic properties
(especially with regard to recentering) are formulated with the aid of the
{\it structure group}. Given an (abstract) model, a fixed point problem is solved and gives a solution
in a
(linear) space of {\it modelled distributions}. The abstract solution can then be mapped (``{\it reconstructed}\hspace{0.4mm}'') into an actual distribution (a.k.a generalized function). In fact,
one has the rather precise correspondences as follows (see \cite{FrizHairer14} for explicit
details in the level-2 setting): 

\begin{table}[!htbp]
\centering
\begin{tabular}{|lll|}
\hline
    rough path                   &      $\longleftrightarrow$                 &  model \\ \hline
     Chen's relation                  &      $\longleftrightarrow$           & structure group \\ \hline
     controlled rough path                  &      $\longleftrightarrow$                &  modelled
distribution \\ \hline
       rough integration                 &   $\longleftrightarrow$               &  reconstruction map \\ \hline
\end{tabular}
\\
\caption{Basic correspondences: rough paths $\longleftrightarrow$  regularity structures}
\label{my-label}

\end{table}
\noindent Furthermore, one has similar results concerning continuity properties of the solution
map as a function of the enhanced noise:

\begin{table}[!htbp]
\centering
\begin{tabular}{|l|}
\hline
   continuity of solution in (rough path $\longleftrightarrow $ model) topology \\ \hline
\end{tabular}
\end{table}

\noindent Unfortunately mollified lifted noise - in infinite dimensions - in general does not converge (as a model), hence {\it renormalization} plays a fundamental role in regularity structures.  The algebraic formalism of how to conduct this renormalization then relies on heavy Hopf algebraic considerations \cite{Hairer14}, pushed to 
great generality
in \cite{BHZ16}, see also \cite{Hairer16} for a
summary. Our investigation was driven by two questions:

\vspace{0.3cm}
{\it

\newcounter{intro_qs}
\begin{enumerate}[label=(\arabic*)]
\item \label{ques:fd_renorm} Are there meaningful (finite-dimensional) examples from stochastics
which require (infinite) renormalization?

\vspace{0.3cm}

\item \label{ques:alg_st} Do we have algebraic structures in rough path theory comparable
with those seen in regularity structures? 
\setcounter{intro_qs}{\value{enumi}}
\end{enumerate}

}

\vspace{0.3cm}

To be more specific, with regard to~\ref{ques:fd_renorm}, consider the situation of a differential equation driven by some finite-dimensional Brownian (or more general Gaussian) noise, mollified at scale $\eps$, followed by the question 
if the resulting (random) ODE solutions converge as $\eps \to 0$.
As remarked explicitly in~\cite[p.230]{FrizHairer14}, this is very often the case (with concrete results given in~\cite[Sec.10]{FrizHairer14}), with the potential caveat of area (and higher order) anomalities (\cite{FrizOberhauser09, FGL15} ...), leading to a more involved description (sometimes called {\it finite renormalization}) of the limit.
 We emphasize, however, that this is not always the case; there are perfectly meaningful (finite-dimensional) situations which require (infinite) renormalization, which we sketch in
Section~\ref{sec:introex} below together with precise references.
We further highlight that a natural example of geometric rough path (over $\R^2$) with a logarithmically diverging area term requiring (infinite) renormalization appears in Hairer's solution of the KPZ equation~\cite{KPZ}.
This situation is characteristic of singular SPDEs, in which the procedure described above typically leads to plain divergence, cured by ``subtracting infinities'', a.k.a. {\it infinite renormalization}.

Much effort in this work is then devoted to question~\ref{ques:alg_st}: In~\cite{BHZ16}, the algebraic formalism in
regularity structures relies crucially on two Hopf algebras, $\TT_{+}$ and $\TT_{-}$ (which are further in ``cointeraction''). The first one helps to
construct the structure group which, in turn, provides the recentering
(``positive renormalization'' in the language of \cite{BHZ16}) and hence constitutes
an abstract form of Chen's relation in rough path theory. In this sense, $\TT_{+}$ was always present in rough path theory, the point being enforced in
the model case of branched rough paths \cite{Gubinelli10, HairerKelly15} where $\TT_{+}$ is effectively given by the Connes-Kreimer Hopf algebra.

Question~\ref{ques:alg_st} is then reduced to the question if $\TT_{-}$, built to carry
out the actual renormalization of models, and subsequently SPDEs, (``negative
renormalization'' in the language of \cite{BHZ16}), has any correspondence in rough
path theory. Our answer is again affirmative and we establish the precise relation:

\begin{table}[!htbp]
\centering
\begin{tabular}{|l|}
\hline
   translation of rough paths $\longleftrightarrow$ renormalization of models \\ \hline
\end{tabular}
\end{table}

At last, during the course of writing this paper, we realized that we have been touching on a third important point, whose importance seems to transcend the
rough path setting in which it is discussed. 

\vspace{0.3cm}
{\it

\begin{enumerate}[label=(\arabic*)]
\setcounter{enumi}{\value{intro_qs}}
\item \label{ques:renorm_eq} How does one obtain from the renormalized model, in some algebraic and automated fashion, the renormalized equation? 
\end{enumerate}
}
\vspace{0.3cm}

It is indeed the algebraic approach to ``translation of rough paths'' (i.e. renormalization of a branched rough path model)
that indicated an important role played by pre-Lie structures, which first
appear in Section~\ref{subsec:transBranched} to
construct the translation operator (on forest series) and then to characterize its dual.
These considerations help answer the (not very precise) question of what pre-Lie structures (after all, a well-known tool in the renormalization theory, e.g.~\cite{Manchon11} and the references therein) have to do with rough paths and regularity structures. 
From a regularity structures perspective, a remarkable consequence is that this allows to understand {\it directly} the action of the renormalization group on the (to-be-renormalized) equation at hand, thus providing an answer to question~\ref{ques:renorm_eq}.
Indeed, by exploiting the pre-Lie structure of the space of trees, we obtain a direct conversion formula for the RDE driven by a translated branched rough path; see Section~\ref{subsec:effectsRDEs}, Theorem~\ref{thm:transRDEs}, and Remark~\ref{rem:new52}.
The analogous statement in regularity structures is an explicit form of an arbitrary renormalised SPDE, a result which was recently established in~\cite{BCCH17}.

\noindent Several remarks are in order. 

\begin{itemize}
\item We first develop the algebraic renormalization theory for rough paths
in its own right, analytic considerations then take place in Section \ref{sec:RDE}. 
The link to regularity structures and its renormalization
theory is only made in Section \ref{sec:renorm}.

\item While pre-Lie morphisms play a crucial role in the construction of translation maps, we point out that in certain situations the fine-details of pre-Lie structures are not visible; see the final point of Theorem~\ref{thm:transRPs}, as well as Remarks~\ref{remark:algMorphs} and~\ref{remark:algMorphs2}.

\item In Section~\ref{sec:introex} we present several examples, 
based on finite- (and even one-) dimensional Brownian motion, which do require genuine renormalization. 
Another interesting type of rough path renormalization, aiming at fractional Brownian (recall divergence of L\'evy area for Hurst parameter
 $H \le 1/4$, \cite{CQ02}) based on Fourier normal ordering, was proposed by Unterberger \cite{U13}. That said, his methods and aims are quite different from those considered in this paper and/or those from Hairer's regularity structures.

\item The existence of a finite-dimensional renormalization group is much
related to the stationarity of the (lifted) noise, see~\cite{Hairer14} and the recent article~\cite{ChandraHairerForth}. In the rough path context, this amounts to saying that a
random (branched) rough path $\mathbf{X}=\mathbf{X}\left( \omega \right) $,
with values in a (truncated) Butcher (hence finite-dimensional Lie) group $\GG$, actually has independent increments with respect to the Grossmann-Larson
product $\star$ (dual to the Connes-Kreimer coproduct $\Delta _{\star}$). In
other words, $\mathbf{X}$ is a (continuous) $\GG$-valued L\'{e}vy process.
This yields a close connection to the works~\cite{FrizShekhar17,Chevyrev18} devoted to the study of L{\'e}vy rough paths;
in Section~\ref{sec:levy}  we shall see how L{\'e}vy triplets {(or rather tuples in the absence of jumps)} behave under renormalization. 
\end{itemize}

{
\subsection{Geometric and non-geometric rough paths} \label{sec:GNG}  

In this subsection, we briefly recall the notions of geometric and branched rough paths; see~\cite{FrizVictoir10, Gubinelli10, HairerKelly15} for further details.
See also Sections~\ref{subsec:TensorPrelims} and~\ref{subsec:PrelimsForest} for further details on the algebraic structures involved. 

\textbf{Geometric rough paths.} Consider a path $X : [0,T] \to \R^d$.
A (geometric) rough path over $X$ is a map $\mathbf{X} : [0,T]^2 \to T((\R^d))$, where $T((R^d)) = \prod_{k=0}^\infty (\R^d)^{\otimes k}$ is the space of ``tensor series'' over $\R^d$, which should be thought of as the iterated integrals of $X$.
Equipping $\R^d$ with an inner product, 
we can identify $T((\R^d))$ with the algebraic dual of the tensor algebra
\begin{equ}
T(\R^d) = \R \oplus \R^d \oplus (\R^d)^{\otimes 2} \oplus \dots \;.
\end{equ}
One should think of the components of $\mathbf{X}$ as formally being given by
\begin{equ}\label{eq:iter_int}
\langle\mathbf{X}_{s,t}, e_{i_1 \dots i_n}\rangle := \int_s^t \dots \int_s^{t_{2}} dX_{t_1}^{i_1} \dots dX_{t_n}^{i_n}\;,
\end{equ}
for $i_1,\dots,i_n \in \{ 1 , \dots, d\}$, where $X^i_{t} = \scal{X_t,e_i}$ and where we use the shorthand $e_{i_1 \dots i_n} = e_{i_1}\tensor \dots \tensor e_{i_n}$ with $\tensor$ denoting the tensor product in $T(\R^d)$.
We emphasize that, unless $n=1$, the definition~\eqref{eq:iter_int} is, in general, only formal and one should think of the rough path $\mathbf{X}$ as defining the RHS.
\par
Observe that if $X$ is smooth and~\eqref{eq:iter_int} is used to define $\mathbf{X}$, then the so-called shuffle identity holds
\begin{equ}\label{eq:shuffle}
\langle\mathbf{X}_t,e_{i_1 \dots i_n}\rangle\langle\mathbf{X}_t,e_{j_1 \dots j_m}\rangle= \langle\mathbf{X}_t,e_{i_1 \dots i_n} \shuffle e_{j_1 \dots j_m}\rangle\;, \quad \text{for all } e_{i_1 \dots i_n}, e_{j_1 \dots j_m} \in T(\R^d)\;,
\end{equ}
where $\shuffle$ denotes the (commutative) shuffle product~\cite{reutenauer93}.
While we do not give the definition of $\shuffle$ here or prove this identity, we remark that it is a direct consequence of integration by parts.
Another important algebraic identity which holds in this case is Chen's relation
\begin{equ}
\mathbf{X}_{s,t} = \mathbf{X}_{s,u}\tensor\mathbf{X}_{u,t} \;, \quad \text{for all } s,t,u \in [0,T]\;,
\end{equ}
which can be shown by an application of Fubini's theorem.

The concept of a (weakly geometric) rough path should be thought of as a generalisation of these identities to paths of lower regularity.

\begin{definition}\label{def:geo_rp}
Let $\gamma \in (0,1]$.
A \emph{$\gamma$-H{\"o}lder weakly geometric rough path} is a map $\mathbf{X}:[0,T]^2\to T((\R^d))$ satisfying
\begin{enumerate}[label=\roman*)]
\item\label{pt:shuffle} $\langle\mathbf{X}_{st},x \shuffle y\rangle = \langle\mathbf{X}_{st},x\rangle\langle\mathbf{X}_{st},y\rangle$, for all $x,y \in T(\R^d)$,
\item\label{pt:chen} $\mathbf{X}_{st} = \mathbf{X}_{su}\tensor \mathbf{X}_{ut}$ for all $s,t,u \in [0,T]$,
\item\label{pt:holder} $\sup_{s\neq t} {|\langle\mathbf{X}_{st},e_{i_1,\ldots, i_n}}|\rangle/{|t-s|^{\gamma n}} < \infty$, for all $n \geq 1$ and $i_1,\ldots, i_n \in \{1,\ldots, d\}$.
\end{enumerate}
\end{definition}

\textbf{Branched rough paths.} One is often interested in paths $X$ for which natural definitions of ``iterated integrals'' do not satisfy classical integration by parts and thus do not constitute geometric rough paths, e.g., integrals defined in the sense of It{\^o} for a semi-martingale $X$.
Branched rough paths are a generalisation of geometric rough paths which allows for violation of the shuffle identity~\eqref{eq:shuffle} and thus of the usual rules of calculus.
This is achieved by substituting the space $T((\R^d))$ with a larger (Hopf) algebra $\mathcal{H}^*$ in which natural generalisations of properties~\ref{pt:shuffle},~\ref{pt:chen}, and~\ref{pt:holder} are required to hold.
The Hopf algebra $\HH^*$ is known as the Grossman--Larson algebra of series of forests, and is the algebraic dual of the Connes--Kreimer Hopf algebra~\cite{connes98} consisting of polynomials of rooted trees with nodes decorated by the set $\{1,\ldots, d\}$.

Denoting by $\poly$ the (commutative) polynomial product on $\HH$ and by $\star$ the (non-commutative) Grossman--Larson product on $\HH^*$, we have the following analogue of Definition~\ref{def:geo_rp}. 

\begin{definition}\label{def:branched_rp}
Let $\gamma \in (0,1]$.
A \emph{$\gamma$-H{\"older} branched rough path} is a map $\mathbf{X}:[0,T]^2\to\mathcal{H}^*$ satisfying
\begin{enumerate}[label=\alph*)]
\item\label{pt:poly} $\langle\mathbf{X}_{st}, \tau_1 \poly\tau_2\rangle = \langle\mathbf{X}_{st},\tau_1\rangle\langle\mathbf{X}_{st},\tau_2\rangle$ for all $\tau_1,\tau_2 \in \HH$,
\item\label{pt:chen2} $\mathbf{X}_{st} = \mathbf{X}_{su} \star \mathbf{X}_{ut}$ for all $s,t,u \in [0,T]$,
\item\label{pt:holder2} $\sup_{s\neq t} |\langle\mathbf{X}_{st},\tau\rangle|/{|t-s|^{\gamma |\tau|}} < \infty$ for every forest $\tau \in \mathcal{H}$, where $|\tau|$ denotes the number of nodes in $\tau$.
\end{enumerate}
\end{definition}

Here we set $\langle\mathbf{X}_{s,t}, {\bullet_i} \rangle := X^i_{s,t}$ and then think of the components of $\mathbf{X}$ given by the formal recursion
\begin{equ}\label{eq:iter_int_branched}
\langle\mathbf{X}_{s,t}, [\tau_1 \poly \ldots \poly \tau_n]_{\bullet_i}\rangle = \int_s^t \scal{\mathbf{X}_{s,u},\tau_1}\ldots\scal{\mathbf{X}_{s,u},\tau_n} d X^i_u
\end{equ}
for trees $\tau_1,\ldots,\tau_n \in \HH$ and $i \in \{1,\ldots, d\}$, where $[\tau_1\poly\ldots\poly\tau_n]_{\bullet_i}$ denotes the tree formed by grafting the trees $\tau_1,\ldots, \tau_n$ onto a single root with label $i$.
If $X$ is smooth and one uses~\eqref{eq:iter_int_branched} to define $\mathbf{X}$, then, as before, points~\ref{pt:poly} and~\ref{pt:chen2} are direct consequences of integration by parts and Fubini's theorem respectively.

Equipping $T((\R^d))$ with the tensor Hopf algebra structure, there is a canonical graded embedding of Hopf algebras $T((\R^d)) \hookrightarrow \HH^*$.
Points~\ref{pt:poly},~\ref{pt:chen2}, and~\ref{pt:holder2} are therefore generalisations of points~\ref{pt:shuffle},~\ref{pt:chen}, and~\ref{pt:holder}, hence every geometric rough path constitutes a branched rough path.
We emphasize however that this embedding is strict and~\ref{pt:poly} is more general than~\ref{pt:shuffle}, which allows a general branched rough path $\mathbf{X}$ to violate classical integration by parts.
For example, if $\mathbf{X}$ is defined via~\eqref{eq:iter_int_branched} using It{\^o} integrals for a semi-martingale $X$, then $\mathbf{X}$ is an example of a $\gamma$-H{\"o}lder branched (but in general not geometric!) rough path for any $\gamma \in (0,\frac12)$.
}

\subsection{Translation of paths} \label{sec:12}

Consider a $d$-dimensional path $X_{t}$, written with respect to an orthonormal basis $e_{1},\dots ,e_{d}$ of $\R^{d}$,
\[
X_{t}=\sum_{i=1}^{d}X_{t}^{i}e_{i}.
\]
We are interested in constant speed perturbations, of the form
\[
T_{v}X_{t}:=X_{t}+tv,\,\,\,\text{\ with }v=\sum_{i=1}^{d}v^{i}e_{i}\in \R^{d}
\text{.}
\]
In coordinates, $\left( T_{v}X_{t}\right) ^{i}=X_{t}^{i}+tv^{i}$ for $i=1,\dots, d$, i.e,
\[
\left\langle T_{v}X,e_{i}\right\rangle =\left\langle
X_{t},e_{i}\right\rangle +\left\langle tv,e_{i}\right\rangle
.
\]
Consider now an orthonormal basis $e_{0},e_{1},\dots ,e_{d}$ of $\R^{1+d}$, and consider the $\R^{1+d}$-valued ``time-space'' path 
$$\bar{X}
_{t}=X_{t}+X_{t}^{0}e_{0}=\sum_{i=0}^{d}X_{t}^{i}e_{i}
$$ with scalar-valued $X_{t}^{0}\equiv t$. We can now write
\[
T_{v}\bar{X}_{t}=\bar{X}_{t}+tv=X_{t}+X_{t}^{0}\left( e_{0}+v\right) 
\]
which identifies $T_{v}$ as linear map on $\R^{1+d}$, which maps $e_{0}\mapsto e_{0}+v$, and $e_{i}\mapsto e_{i}$ for $i=1,\dots, d$.
We then can (and will) also look at general endomorphisms
of the vector space $\R^{1+d}$, which we still write in the form  
\begin{eqnarray*}
e_{j} &\mapsto &e_{j}+v_{j},\,\,j=0,\dots \,,d \\
v_{j} &=&\sum_{i=0}^{d}v_{j}^{i}e_{i}\in \R^{1+d}.
\end{eqnarray*}
(The initially discussed case corresponds to $\left( v_{0},v_{1},\dots
,v_{d}\right) =\left( v_{0},0,\dots ,0\right) $, with {$\scal{v_{0},e_{0}}=0$,} and much of the sequel,
will take advantage of this additional structure.)

\bigskip

We shall be interested to understand how such perturbations propagate to higher level iterated integrals, whenever $X$ has sufficient structure to make this meaningful.
For instance, if $X=B(\omega)$, a $d$-dimensional Brownian motion, an object of interest would be, with repeated (Stratonovich) integration over $\{ (r,s,t): 0 \le r \le s \le t \le T \}$,
$$
    (T_v B)_{0,T}^{ijk} :=  \int \circ (dB^i + v^i\, dr ) \circ (dB^j + v^j\, ds)  \circ (dB^k + v^k\, dt) =  B_{0,T}^{ijk} + ... $$
where the omitted terms (dots) can be spelled out (algebraically) in terms of contractions of $v$ (resp. tensor-powers of $v$) and iterated integrals of $(1+d)$-dimensional time-space Brownian motion ``$(t,B)$''. (Observe that we just gave a dual description of this perturbation, as seen on the third level, while the initial perturbation took place at the first level: $v$ is a vector here.)

There is interest in higher-level perturbations. In particular, given a $2$-tensor {$v = \sum_{i,j=1}^d v^{ij} e_{i,j}$,} we can consider the level-2-perturbation with no effect on the first level, i.e., $(T_v B)_t^{i} \equiv B_t^{i}$, while for all $i,j=1,...,d$,
$$
        (T_v B)_t^{ij} = B_t^{ij} + v^{ij}\, t
$$
For instance, writing $B^{I;w}$ for iterated It\^{o} integrals, in contrast to $B^w$ defined by iterated Stratonovich integration, we have with $v := \frac{1}{2} I^d$ where $(I^d)^{ij} = \delta^{ij}$, i.e., the identity matrix,
$$
       (T_v B^I)_t^{ij} = B_t^{ij}.
$$
This is nothing but a restatement of the familiar formula $ \int_0^t B^i dB^j +\tfrac{1}{2} \delta^{ij} t = \int_0^t B^i \circ dB^j$.
It is a non-trivial exercise to understand the It\^{o}-Stratonovich correction at the level of higher iterated integrals, cf.~\cite{BenArous89} and a ``branched'' version thereof discussed in Section~\ref{subsec:ItoStrat} below.
{Further examples where such translations serve as a ``renormalisation'' are discussed in Section~\ref{sec:introex}, notably the case $\bar B^{ij} = (T_a B)_t^{ij}$ with an anti-symmetric $2$-tensor $a = (a^{ij})$ which arises in the study of Brownian particles in a magnetic field.} 

It will be important for us to understand (explicitly) how to formulate (constant speed, higher) order translations, an analytic operation on rough paths, algebraically and ``point-wise''  terms of the time-space rough path.

\subsection{Organization of the paper}

This paper is organized as follows.
In Section~\ref{sec:TgRP}, we first discuss renormalization/translation in the by now well established setting of geometric rough paths.
The algebraic 
background is found for instance in~\cite{reutenauer93}.
We then, in Section~\ref{sec:TbRP}, move to branched rough paths~\cite{Gubinelli10}, in the notation and formalism from Hairer-Kelly~\cite{HairerKelly15}, and in particular introduce the relevant pre-Lie structures.
In Section~\ref{sec:Ex} we illustrate the use of the (branched) translation operator (additional examples were already mentioned in Section 
\ref{sec:introex}), while in Section \ref{sec:RDE} we describe the analytic and algebraic effects of such translations on rough paths and associated RDEs.
Lastly, Section~\ref{sec:renorm} is devoted to the systematic comparison of the translation operator and ``negative renormalization'' introduced in~\cite{BHZ16}.

\medskip

{\bf Acknowledgements.} P.K.F. is partially supported by the European Research Council through CoG-683164 and DFG research unit FOR2402.
I.C., affiliated to TU Berlin when this project was commenced, was supported by DFG research unit FOR2402, and is currently supported by a Junior Research Fellowship of St John's College, Oxford. 
Y.B. thanks Martin Hairer for financial support from his Leverhulme Trust award.
R.P. is supported by European Research Council through CoG-683164 and was affiliated to Max
Planck Institute for Mathematics in the Sciences, Leipzig, in autumn 2018.
R.P. also thanks
Alexander Schmeding for helpful discussions and information on topological
questions.

\section{Translation of geometric rough paths} \label{sec:TgRP}

{\it We review the algebraic setup for geometric rough paths, as enhancements of $X=(X_0,X_1, ... , X_d)$, a signal with values in $V=\R^{1+d}$. Recall the natural state-space of such rough paths is $T((V))$, a space of tensor series (resp. a suitable truncation thereof related to the regularity of $X$). Typically $\dot X \equiv (\xi_0,\xi_1,..., \xi_d)$ models noise. Eventually, we will be interested in $X_0 (t) = t$, so that $X$ is a time-space (rough) path, though this plays little role in this section.
}

\subsection{Preliminaries for tensor series}\label{subsec:TensorPrelims}

We first establish the notation and conventions used throughout the paper. Most algebraic aspects used in this section may be found in~\cite{reutenauer93} and~\cite{FrizVictoir10} Chapter~7.

Throughout the paper we let $\{e_0, e_1,\ldots, e_d\}$ be a basis for $\R^{1+d}$.
Consider the vector space of formal tensor series over $\R^{1+d}$
\[
\GDual = \prod_{k=0}^\infty (\R^{1+d})^{\otimes k}
\]
(with the usual convention $(\R^{1+d})^{\otimes 0} = \R$), as well as the vector space of polynomials over $\R^{1+d}$
\[
\GPrimal = \bigoplus_{k=0}^\infty (\R^{1+d})^{\otimes k}.
\]
Note that $\GPrimal$ and $\GDual$ can equivalently be considered as the vector space of words and non-commutative series respectively in $1+d$ indeterminates.

\medskip

\noindent Recall that $\GDual$ can be equipped with a Hopf-type\footnote{The structure here is not exactly of a Hopf algebra since $\Deltashuff$ does not map $\GDual$ into $\GDual^{\otimes 2}$, but rather into the complete tensor product $\GPrimal^{\overline\otimes 2} \simeq \prod_{k,m=0}^\infty (\R^{1+d})^{\otimes k}\otimes (\R^{1+d})^{\otimes m}$, see~\cite[Sec.~1.4]{reutenauer93}.} algebra structure
\[
(\GDual, \tensor ,\Deltashuff, \alpha)
\]
with tensor (concatenation) product $\tensor$, coproduct $\Deltashuff$ which is dual to the shuffle product $\shuffle$ on $\GPrimal$, and antipode $\alpha$. Recall that $\Deltashuff$ is explicitly given as the unique continuous\footnote{We equip henceforth $\GDual$ and $\GPrimal\overline\otimes\GPrimal$ with the product topology.} algebra morphism such that
\begin{align*}
\Deltashuff &: \GDual \rightarrow \GPrimal\overline\otimes\GPrimal \\
\Deltashuff &: v \mapsto v\otimes 1 + 1\otimes v, \; \; \text{for all } v \in \R^{1+d}.
\end{align*}
We shall often refer to elements $e_{i_1}\tensor\ldots \tensor e_{i_k}$ as words consisting of the letters $e_{i_1}, \ldots, e_{i_k} \in \{e_0,\ldots, e_d\}$, and shall write $e_{i_1,\ldots, i_k} = e_{i_1}\tensor\ldots \tensor e_{i_k}$. We likewise denote by
\[
(\GPrimal, \shuffle, \Delta_\tensor, \tilde \alpha)
\]
the shuffle Hopf algebra.
Recall that by identifying $\R^{1+d}$ with its dual through the basis $\{e_0,\ldots, e_d\}$, there is a natural duality between $\GPrimal$ and $\GDual$ in which $\shuffle$ is dual to $\Delta_\shuffle$, and $\tensor$ is dual to $\Delta_\tensor$.

We let $G(\R^{1+d})$ and $\LL((\R^{1+d}))$ denote the set of group-like and primitive elements of $\GDual$ respectively. Recall that $\LL((\R^{1+d}))$ is precisely the space of Lie series over $\R^{1+d}$, and that
\[
G(\R^{1+d}) = \exp_\tensor(\LL((\R^{1+d}))).
\]

For any integer $N \geq 0$, we denote by $T^N(\R^{1+d})$ the truncated algebra obtained as the quotient of $\GDual$ by the ideal consisting of all series with no words of length less than $N$ (we keep in mind that the tensor product is always in place on $T^N(\R^{1+d})$). Similarly we let $\G^N(\R^{1+d}) \subset T^N(\R^d)$ and $\LL^N(\R^{1+d}) \subset T^N(\R^{1+d})$ denote the step-$N$ free nilpotent Lie group and Lie algebra over $\R^{1+d}$ respectively, constructed in analogous ways.

Finally, we identify $\R^d$ with the subspace of $\R^{1+d}$ with basis $\{e_1,\ldots, e_d\}$. From this identification, we canonically treat all objects discussed above built from $\R^d$ as subsets of their counterparts built from $\R^{1+d}$.
For example, we treat the algebra $T((\R^d))$ and Lie algebra $\LL^N(\R^d)$ as a subalgebra of $\GDual$ and a Lie subalgebra of $\LL^N(\R^{1+d})$ respectively.

\subsection{Translation of tensor series}\label{subsec:TensorTranslate}

Recall that, by the universal property of $\GPrimal$ and the graded structure of $\GDual$, any linear map $M: \R^{1+d} \to \GDual$ such that $M(e_i)$ has no component of order zero (i.e., $\scal{M(e_i),1} = 0$) for all $i \in \{0,\ldots, d\}$ extends uniquely to a continuous algebra morphism $M : \GDual \to \GDual$.

\begin{definition}\label{def:TensorTranslate}
For a collection of Lie series $v = (v_0,\ldots, v_d)\subset \LL((\R^{1+d}))$, define $T_v : \GDual \rightarrow \GDual$ as the unique extension to a continuous algebra morphism
of the linear map
\begin{align*}
T_v &: \R^{1+d} \rightarrow \LL((\R^{1+d})) \subset \GDual \\
T_v &: e_i \mapsto e_i + v_i, \; \; \text{for all } i \in \{0,\ldots, d\}.
\end{align*}
\end{definition}

In the sequel we shall often be concerned with the case that $v_i = 0$ for $i = 1,\ldots, d$ and $v_0$ takes a special form. We shall make precise whenever such a condition is in place by writing, for example, $v = v_0 \in \LL^N(\R^{d})$.

We observe the following immediate properties of $T_v$:

\begin{itemize}
\item Since $T_v$ is a continuous algebra morphism which preserves the Lie algebra $\LL((\R^{1+d}))$, it holds that $T_v$ maps $G(\R^{1+d})$ into $G(\R^{1+d})$;

\item $T_v \circ T_u = T_{v + T_v(u)}$, where we write $T_v(u) := (T_v(u_0),\ldots, T_v(u_d))$. In particular, $T_{v+u} = T_v \circ T_u$ for all $v = v_0,u = u_0 \in \LL((\R^{d}))$;

\item For every integer $N \geq 0$, $T_v$ induces a well-defined algebra morphism $T^N_v : T^N(\R^{1+d}) \rightarrow T^N(\R^{1+d})$, which furthermore maps $\G^N(\R^{1+d})$ into itself.
\end{itemize}

The following lemma moreover shows that $T_v$ respects the Hopf algebra structure of $\GDual$.
Note that $\GDual^{\otimes 2}$ embeds densely into $\GPrimal^{\overline\otimes2}$, and thus $T_v\otimes T_v$ extends uniquely to a continuous algebra morphism $\GPrimal^{\overline\otimes 2} \to \GPrimal^{\overline\otimes 2}$.

\begin{lemma}\label{lem:HopfMorph}
The map $T_v : \GDual \rightarrow \GDual$  satisfies $(T_v \otimes T_v) \Delta_\shuffle = \Delta_\shuffle T_v$ and commutes with the antipode $\alpha$.
\end{lemma}

\begin{proof}
To show that $(T_v \otimes T_v) \Delta_\shuffle = \Delta_\shuffle T_v$, note that both $(T_v \otimes T_v) \Delta_\shuffle$ and $\Delta_\shuffle T_v$ are continuous algebra morphisms, and so they are equal provided they agree on $e_0,\ldots, e_d$. Indeed, we have
\[
\Delta_\shuffle T_v (e_i) = \Delta_\shuffle (e_i + v_i) = 1 \otimes (e_i + v_i) + (e_i + v_i) \otimes 1
\]
(here we used that each $v_i$ is a Lie element, i.e., primitive in the sense $\Delta_\shuffle v_i = 1 \otimes v_i + v_i \otimes 1$) and
\[
(T_v \otimes T_v) \Delta_\shuffle (e_i) = (T_v\otimes T_v) (1\otimes e_i + e_i\otimes 1) = 1 \otimes (e_i + v_i) + (e_i + v_i) \otimes 1,
\]
as required.
It remains to show that $T_v$ commutes with the antipode $\alpha$.
Actually, this is implied by general principles (e.g. \cite[Theorem~2.14]{Preiss16}, and the references therein), but as it is short to spell out, we give a direct argument: consider the opposite algebra $(\GDual)^{\op}$ (same set and vector space structure as $\GDual$ but with reverse multiplication). Then $\alpha : \GDual \rightarrow (\GDual)^{\op}$ is an algebra morphism, and again it suffices to check that $\alpha T_v$ and $T_v \alpha$ agree on $e_0,\ldots, e_d$. Indeed, since $v_i \in \LL((\R^{1+d}))$, we have $\alpha(v_i) = -v_i$, and thus
\[
\alpha T_v(e_i) = \alpha(e_i + v_i) = -e_i - v_i
\] 
and
\[
T_v \alpha(e_i) = T_v(-e_i) = -e_i - v_i.
\]
\end{proof}

\subsection{Dual action on the shuffle Hopf algebra \texorpdfstring{$T(\R^{1+d})$}{T(R\^{}(1+d))}} \label{subsec:DualMapGeom}

We now wish to describe the dual map $T_v^* : \GPrimal \rightarrow \GPrimal$ for which
\[
\scal{T_v x, y} = \scal{x, T_v^* y}, \; \; \text{for all } x \in \GDual, \; \; y \in \GPrimal.
\]
We note immediately that Lemma~\ref{lem:HopfMorph} implies $T^*_v$ is a Hopf algebra morphism from $(\GPrimal, \shuffle, \Delta_\tensor, \tilde \alpha)$ to itself.

For simplicity, and as this is the case most relevant to us, we only consider in detail the case $v = v_0 \in \LL((\R^{1+d}))$, i.e., $v_i = 0$ for $i = 1,\ldots, d$ (but see Remark~\ref{remark:generalGeom} for a description of the general case).

Let $\SSS$ denote the unital free commutative algebra generated by the non-empty words $e_{i_1,\ldots,i_k} = e_{i_1}\tensor \ldots\tensor e_{i_k}$ in $\GPrimal$. We let $\mathbf{1}$ and $\cdot$ denote the unit element and product of $\SSS$ respectively. For example, 
\begin{align*}
e_{0,1} \cdot e_2 &= e_2 \cdot e_{0,1} \in \SSS, \\
e_0 \cdot e_{1,2} &\ne e_0 \cdot  e_{2,1} \in \SSS.
\end{align*}

For a word $w \in \GPrimal$, we let $D(w)$ denote the set of all elements
\[
w_1\cdot\ldots \cdot w_k \otimes \tilde w \in \SSS \otimes T(\R^{1+d})
\]
where $w_1,\ldots,w_k$ is formed from disjoint subwords of $w$ and $\tilde w$ is the word obtained by replacing every $w_i$ in $w$ with $e_0$ (note that $\mathbf{1} \otimes w$, corresponding to $k=0$, is also in $D(w)$).

Consider the linear map $S : \GPrimal \rightarrow \SSS \otimes \GPrimal$ defined for all words $w \in \GPrimal$ by
\[
S(w) = \sum_{w_1\cdot\ldots \cdot w_k \otimes \tilde w \in D(w)} w_1\cdot\ldots \cdot w_k \otimes \tilde w.
\]
For example
\begin{align*}
S(e_{0,1,2}) =& \mathbf{1} \otimes (e_{0,1,2,}) \\
&+ (e_0)\otimes (e_{0,1,2}) + (e_1)\otimes(e_{0,0,2}) + (e_2) \otimes (e_{0,1,0}) \\
&+ (e_0 \cdot e_1) \otimes (e_{0,0,2}) + (e_0 \cdot e_2)\otimes (e_{0,1,0}) + (e_1 \cdot e_2) \otimes (e_{0,0,0}) \\
&+ (e_0 \cdot e_1 \cdot e_2)\otimes (e_{0,0,0}) + (e_{0,1})\otimes (e_{0,2}) + (e_{1,2}) \otimes (e_{0,0}) \\
&+ (e_{0,1} \cdot e_2) \otimes (e_{0,0}) + (e_0 \cdot e_{1,2}) \otimes (e_{0,0}) \\
&+ (e_{0,1,2}) \otimes (e_0).
\end{align*}

\begin{proposition}\label{prop:adjointTv}
Let $v = v_0 \in \LL((\R^{1+d}))$. The dual map $T_v^* : \GPrimal \rightarrow \GPrimal$ is given by
\[
T_v^* w = (v \otimes \id) S(w),
\]
where $v(w_1 \cdot \ldots \cdot w_k) := \scal{w_1,v}\ldots\scal{w_k, v}$ and $v(\mathbf{1}) := 1$.
\end{proposition}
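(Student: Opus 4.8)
The plan is to prove the stated formula by testing against words, which span $\GPrimal$ (and topologically span $\GDual$). Since $\GPrimal$ consists of finite linear combinations of words, since for each word $w$ the set $D(w)$ is finite — it indexes selections of disjoint subwords of the finite word $w$, so $(v\otimes\id)S(w)$ is genuinely an element of $\GPrimal$ — and since the pairing $\gen{\cdot,\cdot}$ is the one for which distinct words are orthonormal, it suffices to check
\[
\gen{T_v x,\, w} \;=\; \gen{x,\, (v\otimes\id)S(w)}
\]
for all words $x \in \GDual$ and $w \in \GPrimal$. Here the left-hand side is the coefficient of the word $w$ in the tensor series $T_v x$, and the right-hand side is the coefficient of the word $x$ in the polynomial $(v\otimes\id)S(w)$.

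First I would compute the left-hand side explicitly. Write $x = e_{j_1,\dots,j_n}$ and let $P = \{\ell : j_\ell = 0\}$ be the set of positions of $x$ carrying the letter $e_0$. Since $v = v_0$ (so $v_i = 0$ for $i \geq 1$) and $T_v$ is an algebra morphism for $\tensor$, we have $T_v x = a_1 \tensor \dots \tensor a_n$ with $a_\ell = e_0 + v$ for $\ell \in P$ and $a_\ell = e_{j_\ell}$ otherwise. Expanding the tensor product over the two summands at each position in $P$, then expanding each occurrence of the Lie series $v$ as a sum over words, $v = \sum_u \gen{u,v}\,u$ — in which only \emph{nonempty} words $u$ occur, because $v$, being primitive, has no constant term — and reading off the coefficient of $w$, one obtains
\[
\gen{T_v x,\, w} \;=\; \sum_{Q \subseteq P}\ \sum_{(w^{(\ell)})_{\ell \in Q}}\ \prod_{\ell \in Q} \gen{w^{(\ell)}, v},
\]
where the inner sum ranges over all tuples of nonempty words $(w^{(\ell)})_{\ell \in Q}$ for which the concatenation, in the order $\ell = 1,\dots,n$, of the blocks $w^{(\ell)}$ (for $\ell \in Q$) and the single letters $e_{j_\ell}$ (for $\ell \notin Q$) equals the word $w$.

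The key step — and the one I expect to require the most care — is to identify this indexing set with $\{\,w_1\cdot\ldots\cdot w_k \otimes \tilde w \in D(w) : \tilde w = x\,\}$. Indeed, a datum $\bigl(Q,\,(w^{(\ell)})_{\ell \in Q}\bigr)$ as above exhibits the $w^{(\ell)}$, $\ell \in Q$, as pairwise disjoint contiguous subwords of $w$; collapsing each of them to a single letter $e_0$ and retaining the remaining letters of $w$ produces exactly $x$, the collapsed blocks occupying precisely the $e_0$-slots of $x$ indexed by $Q$ (which is why only $Q \subseteq P$ can contribute) and the retained letters matching $e_{j_\ell}$ for $\ell \notin Q$. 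Conversely, any element of $D(w)$ with $\tilde w = x$ arises from a unique such datum, the selected subwords of $w$ being read off against the $e_0$-slots of $\tilde w = x$. Under this correspondence $k = |Q|$, the factors $w_1,\dots,w_k$ are the $w^{(\ell)}$ listed by increasing $\ell$, and the weights match: $\gen{w_1,v}\cdots\gen{w_k,v} = \prod_{\ell \in Q}\gen{w^{(\ell)},v}$, using $v(w_1\cdot\ldots\cdot w_k) := \gen{w_1,v}\cdots\gen{w_k,v}$ and $v(\mathbf 1) := 1$ for $Q = \emptyset$. Therefore
\[
\gen{T_v x,\, w} \;=\; \sum_{\substack{w_1\cdot\ldots\cdot w_k\otimes\tilde w \,\in\, D(w)\\ \tilde w = x}} \gen{w_1,v}\cdots\gen{w_k,v} \;=\; \gen{x,\, (v\otimes\id)S(w)},
\]
the last equality being the definitions of $S$ and of the pairing, which extracts the coefficient of the word $x$. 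Since $x$ and $w$ were arbitrary, this proves the proposition; one may note in passing that the identity is consistent with Lemma~\ref{lem:HopfMorph}, which already forces $T_v^*$ to be a morphism of the shuffle Hopf algebra $(\GPrimal,\shuffle,\Delta_\tensor,\tilde\alpha)$.
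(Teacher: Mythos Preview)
Your proof is correct and follows essentially the same direct combinatorial route as the paper: both arguments expand $T_v x$ by replacing each occurrence of $e_0$ with $e_0+v$, read off the coefficient of $w$, and identify the resulting index set with $\{w_1\cdot\ldots\cdot w_k\otimes\tilde w\in D(w):\tilde w=x\}$. Your write-up is more explicit about the bijection (via the data $(Q,(w^{(\ell)})_{\ell\in Q})$) than the paper's, which simply asserts equation~\eqref{eq:wiSum}; but the underlying idea is identical.
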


In principle, Proposition~\ref{prop:adjointTv} can be proved algebraically by showing that the adjoint of $\Phi := (v \otimes \id) S$ is an algebra morphism from $\GDual$ to itself, and check that $\Phi^*(e_i) = T_v(e_i)$ for every generator $e_i$. Indeed this is the method used in Section~\ref{subsec:DualMapBranched} to prove the analogous result for the translation map on branched rough paths. However in the current setting of geometric rough paths, we can provide a direct combinatorial proof.

\begin{proof}
Note that the claim is equivalent to showing that for every two words $u, w \in \GPrimal$ (treating $u \in \GDual$)
\begin{equation}\label{eq:tensorSum}
\scal{T_v u, w} = \sum_{w_1\cdot\ldots \cdot w_k \otimes \tilde w \in D(w)} \scal{w_1,v} \ldots \scal{w_k, v} \scal{\tilde w, u}.
\end{equation}
Consider a word $u =e_{i_1}\tensor \ldots \tensor e_{i_k} \in \GPrimal$. Then
\[
T_v(u) = e_{i_1} \tensor \ldots \tensor (e_0 +v) \tensor \ldots \tensor e_{i_k},
\]
where every occurrence of $e_0$ in $u$ is replaced by $e_0 + v$. We readily deduce that for every $w \in \GPrimal$
\begin{equation}\label{eq:wiSum}
\scal{T_v(u),w} = \sum_{\substack{w_1\cdot\ldots \cdot w_k \otimes \tilde w \in D(w) \\ u=\tilde w}} \scal{w_1,v} \ldots \scal{w_k, v}.
\end{equation}
For example, with $v = [e_1,e_2] = e_{1,2} - e_{2,1}$ and $u = e_{0,1,2}$, we have
\[
T_v(u) = e_{0,1,2} + e_{1,2,1,2} - e_{2,1,1,2},
\]
and we see that indeed for
\[
w \in A := \{e_{0,1,2}, e_{1,2,1,2}, e_{2,1,1,2}\},
\]
the right hand side of~\eqref{eq:wiSum} gives $\scal{T_v(u),w}$, whilst $\scal{w_1,v}\ldots\scal{w_k,v} = 0$ for all $w \in \GPrimal \setminus A$ and $w_1 \cdot \ldots \cdot w_k \otimes \tilde w \in D(w)$ such that $u= \tilde w$. But now~\eqref{eq:wiSum} immediately implies~\eqref{eq:tensorSum}.
\end{proof}

\begin{remark}\label{remark:generalGeom}
A similar result to Proposition~\ref{prop:adjointTv} holds for the general case $v = (v_0,\ldots, v_d)$. The definition of $S$ changes in the obvious way that in the second tensor, instead of replacing every subword by the letter $e_0$, one instead replaces every combination of subwords by all combinations of $e_i$, $i \in \{0,\ldots, d\}$, while in the first tensor, one marks each extracted subword $w_j$ with the corresponding label $i \in \{0, \ldots, d\}$ that replaced it, which gives $(w_j)_i$ (so the left tensor no longer belongs to $\SSS$ but instead to the free commutative algebra generated by $(w)_i$, for all words $w \in \GPrimal$ and labels $i \in \{0,\ldots, d\}$). Finally the term $\scal{w_1,v}\ldots\scal{w_k, v}$ would then be replaced by $\scal{(w_1)_{i_1}, v_{i_1}}\ldots \scal{(w_k)_{i_k}, v_{i_k}}$.
\end{remark}

\section{Translation of branched rough paths}   \label{sec:TbRP}
\label{sec:Branched}

{\it In the previous section we studied the translation operator $T$, in the setting relevant for geometric rough path. Here we extend these results to the branched rough path setting, calling the translation operator $M$ to avoid confusion. Our construction of $M$ faces new difficulties, which we resolve with pre-Lie structures. The dual view then leads us to an extraction procedure of subtrees (a concept familiar from regularity structures, to be explored in Section~\ref{sec:renorm}).
}

\subsection{Preliminaries for forest series}\label{subsec:PrelimsForest}

As in the preceding section, we first introduce the notation used throughout the section. Our setup closely follows Hairer-Kelly~\cite{HairerKelly15}. (For additional  algebraic background the reader can consult e.g. ~\cite{GraciaBondia01}, Chapter~14.)

{  Recall that a rooted tree is a finite connected graph without cycles with a distinguished node called the root.
A rooted tree is unordered if there is no order on the edges leaving a node.}
We let $\BB  = \BB (\bullet_0, ... ,\bullet_d)$ denote the real vector space spanned by the set of unordered rooted trees with vertices labelled from the set $\{0,\ldots, d\}$.
We denote by $\BB^*$ its algebraic dual, which we identify with the space of formal series of labelled trees; we write $\BB^* = \BB^*  (\bullet_0, ... ,\bullet_d)$ accordingly.
We canonically identify with $\R^{1+d}$ the subspace of $\BB$ (and of $\BB^*$) spanned by the trees with a single node $\{\bullet_0,\ldots, \bullet_d\}$.

We further denote by $\BPrimal = \BPrimal ( \bullet_0, ... , \bullet_d)$ the vector space spanned by (unordered) forests composed of trees (including the empty forest denoted by $1$), and let $\BDual  = \BDual      (\bullet_0, ... ,\bullet_d)$ denote its algebraic dual which we identify with the space of formal series of forests. We canonically treat $\BB^*$ as a subspace of $\BDual$. Following commonly used notation (e.g.~\cite{HairerKelly15}),
for trees $\tau_1,\ldots,\tau_n \in \BB$ we let $[\tau_1\ldots\tau_n]_{\bullet_i} \in \BB$ denote the forest $\tau_1\ldots\tau_n \in \BPrimal$ grafted onto the node $\bullet_i$.

We equip $\BDual$ with the structure of the Grossman-Larson Hopf-type\footnote{ Again, $\Delta_\poly$ does not map $\BDual$ into ${\BDual}^{\otimes 2}$, but instead into the complete tensor product $\BPrimal^{\overline\otimes 2} \simeq \prod_{k,m=0}^\infty \mathcal{H}^{(k)} \otimes \mathcal{H}^{(m)}$, where $\mathcal{H}^{(k)}$ denotes the vector space of forests with $k$ vertices, and therefore the structure is not exactly that of a Hopf algebra.
Note also that $\Delta_\poly$ is continuous for the product topologies, which we equip $\BDual$ and $\BPrimal^{\overline\otimes 2}$ with henceforth.} algebra
\[
(\BDual, \star, \Delta_\poly, \alpha)
\]
and $\BPrimal$ with the structure of the dual graded Hopf algebra (the Connes-Kreimer Hopf algebra)
\[
(\BPrimal, \poly, \Delta_\star, \tilde\alpha).
\]
In other words, $\BPrimal$ is the free commutative algebra over $\BB$ equipped with a coproduct $\Delta_\star$, and graded by the number of vertices in a forest. We shall often drop the product $\poly$ and simply write $\tau\poly \sigma = \tau \sigma$.

The coproduct $\Delta_\star$ may be described in terms of admissible cuts, for which we use the convention to keep the ``trunk'' on the right: for every tree $\tau \in \BB$
\[
\Delta_\star \tau = \sum_c \tau^c_1\ldots \tau^c_k \otimes \tau^c_0,
\]
where we sum over all admissible cuts $c$ of $\tau$, and denote by $\tau^c_0$ the trunk and by $\tau^c_1\ldots \tau^c_k$ the branches of the cut respectively.

In the sequel, we shall also find it convenient to treat the space $\BPrimal$ equipped with $\star$ as a subalgebra of $\BDual$, in which case we explicitly refer to it as the algebra $(\BPrimal,\star)$.

Recall that the space of series $\BB^*$ is exactly the set of primitive elements of $\BDual$. We let $\GG = \GG ( \bullet_0, ... , \bullet_d)$ denote the group-like elements of $\BDual$, often called the Butcher group, for which it holds that
\[
\GG = \exp_\star (\BB^*).
\]

All the objects introduced above play an analogous role to those of the previous section. To summarise this correspondence, it is helpful to keep the following picture in mind

\begin{align*}
&\text{{``Series space''} ...} &&   \BDual  ( \bullet_0, ... , \bullet_d) \equiv \BDual &\longleftrightarrow &&\GDual \\
&\text{{``Polynomial space''} ...} && \BPrimal ( \bullet_0, ... , \bullet_d) \equiv  \BPrimal   &\longleftrightarrow &&\GPrimal \\
&\text{Lie elements ...} && \BB^* ( \bullet_0, ... , \bullet_d) \equiv \BB^* \subset \BDual  &\longleftrightarrow && \LL((\R^{1+d})) \\
&\text{Group-like elements ...} && \GG ( \bullet_0, ... , \bullet_d) \equiv  \GG \subset \BDual &\longleftrightarrow && G(\R^{1+d}).
\end{align*}

As in the previous section, for any integer $N \geq 0$ we let $\BPrimal^N$ denote ``truncated'' algebra obtained by the quotient of $\BDual$ by the ideal consisting of all series with no forests having less than $N$ vertices (we keep in mind that the product $\star$ is always in place for the truncated objects). Similarly we let $\GG^N \subset \BPrimal^N$ and $\BB^N \subset \BPrimal^N$ denote the step-$N$ Butcher Lie group over $\R^{1+d}$ its and Lie algebra respectively, constructed in analogous ways.

Finally, as before, we write ``$(\R^{d})$'' to denote the analogous objects built over $\R^d$, treated as subsets of their ``full'' counterparts built over $\R^{1+d}$ (by identifying $\R^d$ with the subspace of $\R^{1+d}$ with basis $\{e_1,\ldots, e_d\}$). For example, we treat $\BDual(\R^d)$ and $\BB^N(\R^d)$ as a subalgebra of $\BDual$ and a Lie subalgebra of $\BB^N$ respectively.

\subsection{Translation of forest series} 
\label{subsec:transBranched}

\subsubsection{Non-uniqueness of algebra extensions}

In the previous section, we defined a map $T_v$ which ``translated'' elements in $\GDual$ in directions $(v_0,\ldots,v_d) \subset \LL((\R^{1+d}))$, and which mapped the set of group-like elements $\G(\R^{1+d})$ into itself. In the same spirit, we aim to define a map $M_v$ which translates elements in $\BDual$ in directions $(v_0,\ldots,v_d) \subset \BB^*$, and which likewise maps $\GG$ into itself.

Note that our construction of $T_v$ relied on the fact that any linear map $M : \R^{1+d} \rightarrow \GDual$ {such that $\langle M v,1\rangle=0$ for $v\in\R^{1+d}$ } extended uniquely to a continuous algebra morphism $M : \GDual \rightarrow \GDual$ (for the product $\tensor$). We note here that no such universal property holds for $\BDual$; indeed, there exists a canonical injective algebra morphism
\begin{align}
\begin{split}\label{eq:tensorForestEmbed}
\imath &: \GDual \rightarrow \BDual \\
\imath &: e_i \mapsto \bullet_i
\end{split}
\end{align}
which embeds $\GDual$ into a \emph{strict} subalgebra of $\BDual$. 

Specifically, we can see that $\imath$ is injective by considering the space $\BB^*_\ell \subset \BB^*$ of linear trees, i.e., trees of the form $[\ldots[\bullet_{i_1}]_{\bullet_{i_2}}]\ldots]_{\bullet_{i_k}}$. Then there is a natural projection $\pi_\ell : \BDual \rightarrow \BB^*_\ell$, and one can readily see that $\pi_\ell \circ \imath$ is a vector space isomorphism (this is the same isomorphism described in Remark~2.7 of~\cite{HairerKelly15}). To see further that the image of $T((\R^{1+d}))$ under $\imath$ is not all of $\BDual$, it suffices to observe that the linear tree $[\bullet_i]_{\bullet_j}$ is not in the algebra generated by $\{\bullet_i\}_{i=1}^{1+d}$.

\begin{remark}
The embedding $\imath$ arises naturally in the context of branched rough paths as this is essentially the embedding used in~\cite{HairerKelly15} to realise geometric rough paths as branched rough paths (though note $\imath$ in~\cite{HairerKelly15} denotes $\pi_\ell \circ \imath$ in our notation).
\end{remark}

\begin{remark}\label{remark:freeAlgebra}
While the above argument shows that $(\BB, [\cdot,\cdot])$ is clearly not isomorphic to $\LL(\R^{1+d})$ as a Lie algebra, it is a curious and non-trivial fact that $(\BB, [\cdot,\cdot])$ \emph{is} isomorphic to a free Lie algebra generated by another subspace of $\BB$. Correspondingly, $(\HH,\star)$, being isomorphic to the universal enveloping algebra of $\BB$, is isomorphic to a tensor algebra (see~\cite{Foissy02} Section~8, or~\cite{Chapoton10}).
This was used in~\cite{BC17} to show that the space of branched rough paths is canonically isomorphic to a space geometric rough paths over an enlarged vector space.
\end{remark}

It follows form the above discussion that given a map $M : \R^{1+d} \rightarrow \BDual$, even one whose range is in $\BB^*$, there is in general no canonical choice of how to extend $M$ to elements outside $\imath(\GDual)$ if we only demand that the extension $M : \BDual \rightarrow \BDual$ is an algebra morphsim (moreover, without calling on Remark~\ref{remark:freeAlgebra}, it is {\it a priori} not even clear that such an extension always exists).

\begin{example}
Consider the case of a single label $0$ (i.e. $d=0$), and the map $M : \{\bullet_0, [\bullet_0]_{\bullet_0}\} \rightarrow \BB^*$ given by
\begin{align*}
M &: \bullet_0 \mapsto \bullet_0 \\
M &: [\bullet_0]_{\bullet_0} \mapsto \bullet_0.
\end{align*}
Since
\[
\bullet_0 \star \bullet_0 = [\bullet_0]_{\bullet_0} + 2 \bullet_0 \bullet_0,
\]
we may extend $M$ to an algebra morphism on the truncated space $\HH^2 \rightarrow \HH^2$ by setting
\[
M(\bullet_0\bullet_0) = \frac{1}{2}\left( [\bullet_0]_{\bullet_0} + 2\bullet_0\bullet_0 - \bullet_0 \right).
\]
This example shows that, on the level of the truncated algebras, there is not a unique algebra morphism above the identity map $\id : \bullet_0 \mapsto \bullet_0$.

Of course it is not clear from the above that the identity map $\id : \bullet_0 \mapsto \bullet_0$ can extend in a non-trivial way to an algebra morphism on all of $\BDual \mapsto \BDual$, but such extensions will always exist due to Remark~\ref{remark:freeAlgebra}.
\end{example}

In what follows, we address this non-uniqueness issue by demanding a finer structure on the extension of $M$, namely that $M : \BB^* \rightarrow \BB^*$ is a \emph{pre-Lie algebra} morphism.
The notion of a pre-Lie algebra will be recalled in the following subsection, and the significance of preserving the pre-Lie product on $\BB^*$ is first seen when establishing a dual characterization of $M$ (Proposition \ref{prop:adjointMv}), and then again in Section~\ref{subsec:effectsRDEs} when studying the impact on (rough) differential equations. For now we simply state that this is a natural condition to demand given the role of pre-Lie algebras in control theory and Butcher series~\cite{Calaque11,Manchon11}. 

\subsubsection{The free pre-Lie algebra over \texorpdfstring{$\R^{1+d}$}{R\^{}(1+d)}} \label{subsubsec:PreLie}

\begin{definition}
A (left) \emph{pre-Lie algebra} is a vector space $V$ with a bilinear map $\triangleright : V\times V \rightarrow V$, called the \emph{pre-Lie product}, such that
\[
(x \triangleright y)\triangleright z - x\triangleright(y\triangleright z) = (y \triangleright x)\triangleright z - y\triangleright(x \triangleright z), \; \; \text{for all } x,y,z \in V.
\]
That is, the associator $(x,y,z) := (x \triangleright y)\triangleright z - x\triangleright(y\triangleright z)$ is invariant under exchanging $x$ and $y$.
\end{definition}

One can readily check that every pre-Lie algebra $(V, \triangleright)$ induced a Lie algebra $(V, [\cdot,\cdot])$ consisting of the same vector space $V$ with bracket $[x,y] := x\triangleright y - y\triangleright x$.

\begin{example}\label{example:vectfieldsPreLie}
A basic example of a pre-Lie algebra is the space of smooth vector fields on $\R^e$ with the product $(f_i \partial_i) \triangleright (f_j\partial_j) := (f_i \partial_i f_j) \partial_j$. The induced bracket is the usual Lie bracket of vector fields.
\end{example}

The space of trees $\BB$ can be equipped with a (non-associative) pre-Lie product $\curvearrowright : \BB \times \BB \rightarrow \BB$ defined by
\[
\tau_1 \curvearrowright \tau_2 = \sum_{\tau} n(\tau_1,\tau_2,\tau)\tau,
\]
where the sum is over all trees $\tau \in \BB$ and $n(\tau_1,\tau_2,\tau)$ is the number of \emph{single} admissible cuts of $\tau$ for which the branch is $\tau_1$ and the trunk is $\tau_2$.
Equivalently, $\curvearrowright$ is given in terms of $\star$ by
 \[
\tau_1 \curvearrowright \tau_2 = \pi_{\BB}(\tau_1 \star \tau_2),
\]
where $\pi_\BB : \BPrimal \rightarrow \BB$ is the projection onto $\BB$.

It holds that $(\BB,\curvearrowright)$ indeed defines a Lie algebra for which
\[
[\tau_1,\tau_2] := \tau_1 \curvearrowright \tau_2 - \tau_2 \curvearrowright \tau_1 = \tau_1 \star \tau_2 - \tau_2 \star \tau_1, 
\]
i.e., the Lie algebra structures on $\BB$ induced by $\star$ and $\curvearrowright$ coincide. Moreover since $\curvearrowright$ respects the grading of $\BB$, we can naturally extend $\curvearrowright$ to a bilinear map on the space of series, so that $(\BB^*, \curvearrowright)$ is also a pre-Lie algebra.

We now recall the following universal property of $(\BB, \curvearrowright)$ first established by Chapoton and Livernet~\cite{Chapoton01} Corollary~1.10 (see also~\cite{Dzhumadildaev02} Theorem~6.3).

\begin{theorem}\label{thm:freePreLie}
The space $(\BB, \curvearrowright)$ is the free pre-Lie algebra over $\R^{1+d}$.
\end{theorem}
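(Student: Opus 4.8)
The plan is to establish the stated universality directly. Unwinding the definition of "free pre-Lie algebra over $\R^{1+d}$'', one must show: for every (left) pre-Lie algebra $(W,\triangleright)$ and every linear map $f:\R^{1+d}\to W$ — where $\R^{1+d}\subset\BB$ is the span of the single-vertex trees $\bullet_0,\dots,\bullet_d$ — there is a \emph{unique} pre-Lie algebra morphism $F:\BB\to W$ with $F|_{\R^{1+d}}=f$. The combinatorial engine throughout is the explicit description $\tau_1\curvearrowright\tau_2=\sum_v(\text{the tree obtained from }\tau_2\text{ by attaching the root of }\tau_1\text{ as a new child of }v)$, the sum over all vertices $v$ of $\tau_2$; in particular $\bullet_\ell\curvearrowright\sigma$ is the sum, over vertices of $\sigma$, of "$\sigma$ with an extra $\ell$-labelled leaf there''.

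First I would prove that the single-vertex trees generate $(\BB,\curvearrowright)$ as a pre-Lie algebra, which immediately yields \emph{uniqueness} of $F$ (a pre-Lie morphism is determined by its values on generators). This is a double induction: on the number $n$ of vertices, and then on a suitable secondary total preorder on $n$-vertex trees. Given a target tree $\tau$ with $n\ge 2$ vertices, one removes a canonically chosen leaf $\lambda$ to obtain $\sigma$ and reads $\tau$ off from $\bullet_{\ell(\lambda)}\curvearrowright\sigma$: this product is a positive multiple of $\tau$ plus trees in which the leaf was grafted elsewhere, and — for the right choice of $\lambda$ and of the preorder — the latter are strictly lower, so the secondary induction disposes of them. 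Once generation is known, the universal property of the \emph{abstract} free pre-Lie algebra $\FF:=\FF(\R^{1+d})$ (which exists since pre-Lie algebras form a variety defined by a multilinear identity, and which is graded by the number of generators occurring in a product) provides a surjective \emph{graded} pre-Lie morphism $q:\FF\twoheadrightarrow\BB$, $e_i\mapsto\bullet_i$, since the $\bullet_i$ generate the target.

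It remains to upgrade $q$ to an isomorphism: this gives \emph{existence}, with $F:=(\text{canonical extension of }f\text{ to }\FF)\circ q^{-1}$, a composite of pre-Lie morphisms and hence again one. For this I would prove the \emph{spanning lemma} inside $\FF$: using the pre-Lie identity, every iterated $\triangleright$-product of $n$ generators reduces to a linear combination of a distinguished family of "tree monomials'' $\{t(\tau)\}_{|\tau|=n}$, defined by the formal analogue of the leaf-removal recursion above. Granting this, $\dim\FF_n\le \#\{n\text{-vertex }\{0,\dots,d\}\text{-labelled trees}\}=\dim\BB_n<\infty$, while surjectivity of $q_n$ gives the reverse inequality; hence $q_n$ is a surjection between finite-dimensional spaces of equal dimension, so an isomorphism. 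The \textbf{main obstacle} is precisely the triangularity underlying the spanning lemma: because $\curvearrowright$ grafts onto \emph{every} vertex of the trunk and not just its root, it is genuinely delicate to pin down the leaf-choice and the tree order that make the reduction upper-triangular — this is the heart of Chapoton--Livernet~\cite{Chapoton01} and of Dzhumadildaev~\cite{Dzhumadildaev02}, which one may invoke wholesale. Alternatively, one can run the classical Butcher-series argument: send each $\bullet_i$ to a generic (formal) vector field in the pre-Lie algebra of Example~\ref{example:vectfieldsPreLie}, identify $F(\tau)$ with the corresponding elementary differential, and use the classical linear independence of elementary differentials for generic vector fields to rule out relations, thereby realising the pre-Lie algebra generated by the $\bullet_i$ as the free one.
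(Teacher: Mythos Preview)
The paper does not supply a proof of this theorem at all: it merely records the result as due to Chapoton--Livernet (\cite{Chapoton01} Corollary~1.10) and \cite{Dzhumadildaev02} Theorem~6.3. So there is no in-paper argument to compare against; your sketch already goes beyond what the paper offers.

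Your overall strategy (generation $\Rightarrow$ uniqueness; surjection from the abstract free object plus a dimension bound $\Rightarrow$ existence) is sound, and you correctly isolate the spanning lemma in $\FF$ as the crux before deferring it to precisely the same two references. In that sense your proposal and the paper's treatment ultimately rest on the same literature. The Butcher-series alternative is also a legitimate route, but note it still requires two non-trivial facts you do not reprove: that the elementary-differential assignment $\BB\to\Vect^\infty(\R^e)$ (with formal coefficients) is a \emph{pre-Lie} morphism, and that elementary differentials for generic vector fields are linearly independent.

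One genuine loose end in the self-contained part: your leaf-removal generation argument leaves ``the right choice of $\lambda$ and of the preorder'' unspecified, and the obvious candidates fail. Taking $\lambda$ of maximal depth and ordering by the depth-multiset does \emph{not} give triangularity: for $\tau=[\bullet_1\bullet_2]_{\bullet_0}$, removing $\bullet_1$ and regrafting at $\bullet_2$ yields $[[\bullet_1]_{\bullet_2}]_{\bullet_0}$, which is \emph{deeper} than $\tau$. A cleaner device is to peel off a root branch rather than a leaf: for $\tau=[\tau_1\cdots\tau_n]_{\bullet_i}$ with $n\ge 1$ one has
\[
\tau_1\curvearrowright[\tau_2\cdots\tau_n]_{\bullet_i}
= c\,\tau+\sum_{j\ge 2}[\tau_2\cdots(\tau_1\curvearrowright\tau_j)\cdots\tau_n]_{\bullet_i}
\]
with $c>0$; both grafting factors on the left have strictly fewer vertices, and every error term on the right has only $n-1$ root branches. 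Induction on $(|\tau|,\#\text{root branches})$ then closes the generation argument without any unspecified order.
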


An equivalent formulation of Theorem~\ref{thm:freePreLie} is that for any pre-Lie algebra $(V, \triangleright)$ and linear map $M : \R^{1+d} \rightarrow V$, there exists a unique extension of $M$ to a pre-Lie algebra morphism $M : (\BB,\curvearrowright) \rightarrow (V, \triangleright)$.

\subsubsection{Construction of the translation map}\label{subsubsec:TransBranched}

An immediate consequence of Theorem~\ref{thm:freePreLie} is the following.

\begin{theorem}\label{thm:branchedAlgMorph}
Every linear map $M : \R^{1+d} \rightarrow \BB^*$ extends to a unique continuous algebra morphism $M : \BDual \rightarrow \BDual$ whose restriction to $\BB^*$ is a pre-Lie algebra morphism from $\BB^*$ to itself.
\end{theorem}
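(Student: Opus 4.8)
The plan is to build $M$ in two stages that mirror the two universal properties already in hand: first extend the given map from the generators $\R^{1+d}$ up to the Lie algebra of primitives $\BB^{*}$ using that $\BB$ is the free pre-Lie algebra over $\R^{1+d}$ (Theorem~\ref{thm:freePreLie}), and then extend from $\BB^{*}$ up to all of $\BDual$ using the universal property of the universal enveloping algebra together with the identification $(\BPrimal,\star)\cong U(\BB)$ recorded in Remark~\ref{remark:freeAlgebra}. A recurring book-keeping ingredient is that every map in sight raises the grading by number of vertices, which is what allows the constructions on finitely supported forests to be transported to the series completions.

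First I would apply Theorem~\ref{thm:freePreLie} with target the pre-Lie algebra $(\BB^{*},\curvearrowright)$: the given linear map $M:\R^{1+d}\to\BB^{*}$ extends uniquely to a pre-Lie morphism $M:(\BB,\curvearrowright)\to(\BB^{*},\curvearrowright)$. Since each $M(\bullet_i)$ lies in $\BB^{*}$, hence has lowest-degree component of degree $\geq 1$, and since $\curvearrowright$ is additive in the number-of-vertices grading (grafting a $p$-vertex tree onto a $q$-vertex tree produces a $(p+q)$-vertex tree), an easy induction on the number of vertices shows that $M$ maps a tree with $n$ vertices into the part of $\BB^{*}$ of degree $\geq n$. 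In particular $M$ is continuous for the filtration by number of vertices, so it extends uniquely to a pre-Lie morphism $M:(\BB^{*},\curvearrowright)\to(\BB^{*},\curvearrowright)$ of series.

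Next, recall that a pre-Lie morphism is in particular a morphism for the commutator bracket $[x,y]:=x\curvearrowright y-y\curvearrowright x$, and that on $\BB^{*}$ this bracket agrees with the $\star$-commutator. Thus $M$ restricted to the finitely supported trees $\BB$ is a morphism of Lie algebras from $\BB$ into the associative algebra $(\BDual,\star)$ (using $\BB^{*}\subset\BDual$). By the universal property of $U(\BB)=(\BPrimal,\star)$, it extends uniquely to an algebra morphism $(\BPrimal,\star)\to(\BDual,\star)$; the same degree count as above shows this morphism sends the span of forests with $n$ vertices into the part of $\BDual$ of degree $\geq n$, so it is filtration-preserving and extends uniquely and continuously to an algebra morphism $M:\BDual\to\BDual$, whose restriction to $\BB^{*}$ is by construction the pre-Lie morphism of the previous step. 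For uniqueness, if $M':\BDual\to\BDual$ is any algebra morphism restricting on $\R^{1+d}$ to $M$ and mapping $\BB^{*}$ into itself by a pre-Lie morphism, then $M'|_{\BB^{*}}$ must coincide with our $M|_{\BB^{*}}$ by the uniqueness clause of Theorem~\ref{thm:freePreLie} (read off on each truncation), and since $\BB^{*}$ topologically generates $(\BDual,\star)$ this forces $M'=M$.

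I expect the only genuine subtlety to be the passage from the finitely supported setting — where Theorem~\ref{thm:freePreLie} and the bare universal property of $U(\cdot)$ live — to the series completions $\BB^{*}\subset\BDual$, which is handled entirely by the filtration arguments above; one also has to note, when reading off uniqueness, that a pre-Lie morphism and the Lie morphism it induces are the same underlying map, so that the enveloping-algebra extension really does restrict on $\BB^{*}$ to the pre-Lie morphism and not merely to some Lie morphism above it. As a consistency check at the end it is worth verifying that in the case $M:\bullet_i\mapsto\bullet_i+v_i$ the resulting $M$ reproduces $\pi_{\ell}\circ T_{v}$ on linear trees, in accordance with the embedding $\imath$ of~\eqref{eq:tensorForestEmbed} and Remark~2.7 of~\cite{HairerKelly15}.
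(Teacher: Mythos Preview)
Your proposal is correct and follows essentially the same two-stage strategy as the paper: first extend to a pre-Lie morphism on trees via Theorem~\ref{thm:freePreLie}, then extend to an algebra morphism via the universal property of $U(\BB)\cong(\HH,\star)$ (the paper invokes Milnor--Moore here rather than Remark~\ref{remark:freeAlgebra}, but the content is the same), and finally pass to the series completion by the degree-nondecreasing argument. Your treatment is slightly more detailed on the filtration step and on uniqueness, and your intermediate extension to $\BB^{*}\to\BB^{*}$ before invoking the enveloping algebra is an inessential reordering; the final consistency check is extraneous but harmless.
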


\begin{proof}
By Theorem~\ref{thm:freePreLie}, $M$ extends uniquely to a pre-Lie algebra morphism $M : \BB \rightarrow \BB^*$.
Recall also that, by the Milnor-Moore theorem, $(\HH,\star)$ is isomorphic to the universal enveloping algebra of $(\BB, [\cdot,\cdot])$.
It follows that $M$ extends further to a unique algebra morphism $M : (\HH,\star) \rightarrow (\BDual, \star)$. Finally, since $M$ necessarily does not decrease the degree of every element $x \in \HH$, we obtain a unique continuous extension $M : \BDual \rightarrow \BDual$ for which the restriction $M : \BB^* \rightarrow \BB^*$ is a pre-Lie algebra morphism as desired.
\end{proof}

We can finally define a natural translation map $M_v : \BDual \rightarrow \BDual$ analogous to $T_v$.

\begin{definition}
For $v = (v_0,\ldots, v_d) \subset \BB^*$, define $M_v : \BDual \rightarrow \BDual$ as the unique continuous algebra morphism obtained in Theorem~\ref{thm:branchedAlgMorph} from the linear map
\begin{align*}
M_v &: \R^{1+d} \rightarrow \BB^* \\
M_v &: \bullet_i \mapsto \bullet_i + v_i,  \; \; \text{for all } i \in \{0,\ldots, d\}.
\end{align*}
\end{definition}

\begin{example}
Let us illustrate how the construction works in the case of two nodes with a single label $0$. Since $M_{v}$ is constructed as pre-Lie algebra morphism, we compute
\[
M_{v}\left( \left[ \bullet_0 \right] _{\bullet_0} \right) = M_{v}\left( \bullet_0 \curvearrowright \bullet_0 \right) = M_{v}\left( \bullet_0 \right) \curvearrowright M_{v}\left( \bullet_0 \right) =\left( \bullet_0 +v_0\right) \curvearrowright \left( \bullet_0 +v_0\right).
\]
Since $M_{v}$ is in addition an algebra morphism w.r.t. $\star$ we have
\[
\left( \bullet_0 +v_0\right) \star \left( \bullet_0 +v_0\right) =\left( M_{v}\bullet_0 \right) \star \left( M_{v}\bullet_0 \right) = M_{v}\left( \bullet_0 \star \bullet_0 \right) = M_{v}\left( 2\bullet_0 \bullet_0 +\left[ \bullet_0 \right]_{\bullet_0}\right) 
\]
from which we can uniquely determine $M_{v}\left( \bullet_0 \bullet_0 \right) $.
\end{example}

As in the previous section, we shall often be concerned with the case that $v_i = 0$ for $i = 1,\ldots, d$ and $v_0$ takes a special form. We again make precise whenever such a condition is in place by writing, for example, $v = v_0 \in \BB^N(\R^{d})$.

We observe the following immediate properties of $M_v$, analogous to those of $T_v$:

\begin{itemize}
\item Since $M_v$ is an algebra morphism which preserves the Lie algebra $\BB^*$, it holds that $M_v$ maps $\GG$ into $\GG$;

\item $M_v \circ M_u = M_{v + M_v(u)}$, where we write $M_v(u) = (M_v(u_0),\ldots, M_v(u_d))$. In particular, $M_{v+u} = M_{v} \circ M_u$ for all $v = v_0,u = u_0 \in \BB^*(\R^d)$;

\item For every integer $N \geq 0$, $M_v$ induces a well-defined algebra morphism $M_v^N : \BPrimal^N \rightarrow \BPrimal^N$, which maps $\GG^N$ into $\GG^N$;

\item Recall the embedding $\imath : \GDual \rightarrow \BDual$ from~\eqref{eq:tensorForestEmbed}. Then for all $v = (v_0,\ldots, v_d) \subset \LL((\R^{1+d}))$, it holds that $M_{\imath(v)} \circ \imath = \imath \circ T_v$ (as both are continuous algebra morphisms from $\GDual$ to $\BDual$ which agree on $e_0,\ldots, e_d$).
\end{itemize}

As in the remark before Lemma~\ref{lem:HopfMorph}, note that ${\BDual}^{\otimes2}$ embeds densely into $\BPrimal^{\overline\otimes2}$, and thus $M_v\otimes M_v$ extends uniquely to a continuous algebra morphism $\BPrimal^{\overline\otimes2} \to \BPrimal^{\overline\otimes2}$.

\begin{lemma}\label{lem:HopfMorphBranched}
The map $M_v : \BDual \rightarrow \BDual$ satisfies $(M_v\otimes M_v)\Delta_\poly = \Delta_\poly M_v$ and commutes with the antipode $\alpha$.
\end{lemma}

\begin{remark}
We note that in the following proof, we only use the fact that $M_v$ is a continuous algebra morphism from $\BDual$ to itself which preserves the space of primitive elements $\BB^*$, and so do not directly use the fact that $M_v$ preserves the pre-Lie product of $\BB^*$.
\end{remark}

\begin{proof}
To show that the maps $(M_v\otimes M_v) \Delta_\poly$ and $\Delta_\poly M_v$ agree, by continuity it suffices to show they agree on $\HH$. In turn, their restrictions to $\HH$ are algebra morphisms on $(\HH,\star)$, and, since $(\HH,\star)$ is the universal enveloping algebra of its space of primitive elements $\BB$ by the Milnor-Moore theorem, it suffices to show that
\[
(M_v\otimes M_v)\Delta_\poly \tau = \Delta_\poly M_v \tau, \; \; \text{for all } \tau \in \BB.
\]
But this is immediate since $M_v$ maps $\BB^*$ into itself and $M_v(1) = 1$.
It remains to show that $M_v$ commutes with the antipode, which follows from the same argument as in the proof of Lemma~\ref{lem:HopfMorph}.
\end{proof}

\subsection{Dual action on the Connes--Kreimer Hopf algebra \texorpdfstring{$\BPrimal$}{H}}  \label{subsec:DualMapBranched}

As in Section~\ref{subsec:DualMapGeom}, we now wish to describe the dual map $M_v^* : \BPrimal \rightarrow \BPrimal$ for which
\[
\scal{M_v x, y} = \scal{x, M_v^* y}, \; \; \text{for all } x \in \BDual, \; \; y \in \BPrimal.
\]
For simplicity, we again consider in detail only the special case $v_i = 0$ for $i=1,\ldots, d$ (but see Remark~\ref{remark:generalBranched} for a description of the general case).

Let $\A$ denote the unital free commutative algebra generated by the trees $\tau \in \BB$. We let $\mathbf{1}$ and $\cdot$ denote the unit element and product of $\A$ respectively. The algebra $\A$ plays here the same role as the algebra $\SSS$ in Section~\ref{subsec:DualMapGeom}.

\begin{remark}
Although the algebras $(\A,\cdot)$ and $(\BPrimal,\poly)$ are isomorphic, they should be thought of as separate spaces and thus we make a clear distinction between the two.
\end{remark}

For a tree $\tau \in \BB$, we let $D(\tau)$ denote the set of all elements
\[
\tau_1\cdot\ldots \cdot \tau_k \otimes \tilde \tau \in \A \otimes \BB
\]
where $\tau_1,\ldots,\tau_k$ is formed from all disjoint collections of non-empty subtrees of $\tau$ (including subtrees consisting of a single node), and $\tilde \tau$ is the tree obtained by contracting every subtree $\tau_i$ to a single node which is then labelled by $0$ (note that $\mathbf{1} \otimes \tau$, corresponding to $k=0$, is also in $D(\tau)$).

Consider the linear map $\extract : \BPrimal \rightarrow \A \otimes \BPrimal$ defined for all trees $\tau \in \BB$ by
\[
\extract \tau = \sum_{\tau_1\cdot\ldots\cdot\tau_k \otimes \tilde \tau \in D(\tau)} \tau_1\cdot\ldots\cdot\tau_k \otimes \tilde \tau,
\]
and then extended multiplicatively to all of $\BPrimal$, where we canonically treat $\A\otimes\BPrimal$ as an algebra with multiplication $\MM_{\A\otimes\BPrimal}(\tau_1\otimes\hat{\tau}_1\otimes\tau_2\otimes\hat{\tau}_2):=(\tau_1\cdot\tau_2)\otimes(\hat{\tau}_1\poly\hat{\tau}_2)$ for $\tau_1,\tau_2\in\A$, $\hat{\tau}_1,\hat{\tau}_2\in\BPrimal$.

For example,
\begin{equs}\label{eq:deltaExample}
\begin{aligned}
\extract  \tikzexternaldisable  \begin{tikzpicture}[scale=0.2,baseline=0.1cm]
        \node at (0,0)  [dot,label= {[label distance=-0.2em]below: \scriptsize  $ i $} ] (root) {};
         \node at (1,2)  [dot,label={[label distance=-0.2em]above: \scriptsize  $ k $}] (right) {};
         \node at (-1,2)  [dot,label={[label distance=-0.2em]above: \scriptsize  $ j $} ] (left) {};
            \draw[kernel1] (right) to
     node [sloped,below] {\small }     (root); \draw[kernel1] (left) to
     node [sloped,below] {\small }     (root);
     \end{tikzpicture} \tikzexternaldisable  =&
     \mathbf{1} \otimes  \tikzexternaldisable  \begin{tikzpicture}[scale=0.2,baseline=0.1cm]
        \node at (0,0)  [dot,label= {[label distance=-0.2em]below: \scriptsize  $ i $} ] (root) {};
         \node at (1,2)  [dot,label={[label distance=-0.2em]above: \scriptsize  $ k $}] (right) {};
         \node at (-1,2)  [dot,label={[label distance=-0.2em]above: \scriptsize  $ j $} ] (left) {};
            \draw[kernel1] (right) to
     node [sloped,below] {\small }     (root); \draw[kernel1] (left) to
     node [sloped,below] {\small }     (root);
     \end{tikzpicture} \tikzexternaldisable
     +  \tikzexternaldisable  \begin{tikzpicture}[scale=0.2,baseline=-0.1cm]
        \node at (0,0)  [dot,label= {[label distance=-0.2em]below: \scriptsize  $ i $} ] (root) {};
     \end{tikzpicture} \tikzexternaldisable 
     \otimes  \tikzexternaldisable  \begin{tikzpicture}[scale=0.2,baseline=0.1cm]
        \node at (0,0)  [dot,label= {[label distance=-0.2em]below: \scriptsize  $ 0 $} ] (root) {};
         \node at (1,2)  [dot,label={[label distance=-0.2em]above: \scriptsize  $ k $}] (right) {};
         \node at (-1,2)  [dot,label={[label distance=-0.2em]above: \scriptsize  $ j $} ] (left) {};
            \draw[kernel1] (right) to
     node [sloped,below] {\small }     (root); \draw[kernel1] (left) to
     node [sloped,below] {\small }     (root);
     \end{tikzpicture} 
      +  \tikzexternaldisable  \begin{tikzpicture}[scale=0.2,baseline=-0.1cm]
        \node at (0,0)  [dot,label= {[label distance=-0.2em]below: \scriptsize  $ j $} ] (root) {};
     \end{tikzpicture} \tikzexternaldisable 
     \otimes  \tikzexternaldisable  \begin{tikzpicture}[scale=0.2,baseline=0.1cm]
        \node at (0,0)  [dot,label= {[label distance=-0.2em]below: \scriptsize  $ i $} ] (root) {};
         \node at (1,2)  [dot,label={[label distance=-0.2em]above: \scriptsize  $ k $}] (right) {};
         \node at (-1,2)  [dot,label={[label distance=-0.2em]above: \scriptsize  $ 0 $} ] (left) {};
            \draw[kernel1] (right) to
     node [sloped,below] {\small }     (root); \draw[kernel1] (left) to
     node [sloped,below] {\small }     (root);
     \end{tikzpicture}  \tikzexternaldisable
     + \tikzexternaldisable \begin{tikzpicture}[scale=0.2,baseline=-0.1cm]
        \node at (0,0)  [dot,label= {[label distance=-0.2em]below: \scriptsize  $ k $} ] (root) {};
     \end{tikzpicture} \tikzexternaldisable 
     \otimes  \tikzexternaldisable  \begin{tikzpicture}[scale=0.2,baseline=0.1cm]
        \node at (0,0)  [dot,label= {[label distance=-0.2em]below: \scriptsize  $ i $} ] (root) {};
         \node at (1,2)  [dot,label={[label distance=-0.2em]above: \scriptsize  $ 0 $}] (right) {};
         \node at (-1,2)  [dot,label={[label distance=-0.2em]above: \scriptsize  $ j $} ] (left) {};
            \draw[kernel1] (right) to
     node [sloped,below] {\small }     (root); \draw[kernel1] (left) to
     node [sloped,below] {\small }     (root);
     \end{tikzpicture}  \tikzexternaldisable
     \\ &+ \tikzexternaldisable \begin{tikzpicture}[scale=0.2,baseline=0.1cm]
        \node at (0,0)  [dot,label= {[label distance=-0.2em]below: \scriptsize  $ i $} ] (root) {};
         \node at (0,2)  [dot,label={[label distance=-0.2em]above: \scriptsize  $ k$}] (right) {};
            \draw[kernel1] (right) to
     node [sloped,below] {\small }     (root);
     \end{tikzpicture} \tikzexternaldisable \otimes \tikzexternaldisable \begin{tikzpicture}[scale=0.2,baseline=0.1cm]
        \node at (0,0)  [dot,label= {[label distance=-0.2em]below: \scriptsize  $ 0 $} ] (root) {};
         \node at (0,2)  [dot,label={[label distance=-0.2em]above: \scriptsize  $ j$}] (right) {};
            \draw[kernel1] (right) to
     node [sloped,below] {\small }     (root);
     \end{tikzpicture} \tikzexternaldisable   
     + \tikzexternaldisable \begin{tikzpicture}[scale=0.2,baseline=0.1cm]
        \node at (0,0)  [dot,label= {[label distance=-0.2em]below: \scriptsize  $ i $} ] (root) {};
         \node at (0,2)  [dot,label={[label distance=-0.2em]above: \scriptsize  $ j$}] (right) {};
            \draw[kernel1] (right) to
     node [sloped,below] {\small }     (root);
     \end{tikzpicture} \tikzexternaldisable \otimes \tikzexternaldisable \begin{tikzpicture}[scale=0.2,baseline=0.1cm]
        \node at (0,0)  [dot,label= {[label distance=-0.2em]below: \scriptsize  $ 0 $} ] (root) {};
         \node at (0,2)  [dot,label={[label distance=-0.2em]above: \scriptsize  $ k$}] (right) {};
            \draw[kernel1] (right) to
     node [sloped,below] {\small }     (root);
     \end{tikzpicture} \tikzexternaldisable
  + \tikzexternaldisable  \begin{tikzpicture}[scale=0.2,baseline=0.1cm]
        \node at (0,0)  [dot,label= {[label distance=-0.2em]below: \scriptsize  $ i $} ] (root) {};
         \node at (1,2)  [dot,label={[label distance=-0.2em]above: \scriptsize  $ k $}] (right) {};
         \node at (-1,2)  [dot,label={[label distance=-0.2em]above: \scriptsize  $ j $} ] (left) {};
            \draw[kernel1] (right) to
     node [sloped,below] {\small }     (root); \draw[kernel1] (left) to
     node [sloped,below] {\small }     (root);
     \end{tikzpicture} \tikzexternaldisable \otimes \begin{tikzpicture}[scale=0.2,baseline=-0.1cm]
        \node at (0,0)  [dot,label= {[label distance=-0.2em]below: \scriptsize  $ 0 $} ] (root) {};
     \end{tikzpicture} \tikzexternaldisable 
     \\ &+  \tikzexternaldisable  \begin{tikzpicture}[scale=0.2,baseline=-0.1cm]
        \node at (0,0)  [dot,label= {[label distance=-0.2em]below: \scriptsize  $ i $} ] (root) {};
     \end{tikzpicture} \tikzexternaldisable \cdot 
     \tikzexternaldisable  \begin{tikzpicture}[scale=0.2,baseline=-0.1cm]
        \node at (0,0)  [dot,label= {[label distance=-0.2em]below: \scriptsize  $ j $} ] (root) {};
     \end{tikzpicture} \tikzexternaldisable 
     \otimes  \tikzexternaldisable  \begin{tikzpicture}[scale=0.2,baseline=0.1cm]
        \node at (0,0)  [dot,label= {[label distance=-0.2em]below: \scriptsize  $ 0 $} ] (root) {};
         \node at (1,2)  [dot,label={[label distance=-0.2em]above: \scriptsize  $ k $}] (right) {};
         \node at (-1,2)  [dot,label={[label distance=-0.2em]above: \scriptsize  $ 0 $} ] (left) {};
            \draw[kernel1] (right) to
     node [sloped,below] {\small }     (root); \draw[kernel1] (left) to
     node [sloped,below] {\small }     (root);
     \end{tikzpicture} \tikzexternaldisable
     +  \tikzexternaldisable  \begin{tikzpicture}[scale=0.2,baseline=-0.1cm]
        \node at (0,0)  [dot,label= {[label distance=-0.2em]below: \scriptsize  $ i $} ] (root) {};
     \end{tikzpicture} \tikzexternaldisable 
   \cdot  \tikzexternaldisable  \begin{tikzpicture}[scale=0.2,baseline=-0.1cm]
        \node at (0,0)  [dot,label= {[label distance=-0.2em]below: \scriptsize  $ k $} ] (root) {};
     \end{tikzpicture} \tikzexternaldisable 
     \otimes  \tikzexternaldisable  \begin{tikzpicture}[scale=0.2,baseline=0.1cm]
        \node at (0,0)  [dot,label= {[label distance=-0.2em]below: \scriptsize  $ 0 $} ] (root) {};
         \node at (1,2)  [dot,label={[label distance=-0.2em]above: \scriptsize  $ 0 $}] (right) {};
         \node at (-1,2)  [dot,label={[label distance=-0.2em]above: \scriptsize  $ j $} ] (left) {};
            \draw[kernel1] (right) to
     node [sloped,below] {\small }     (root); \draw[kernel1] (left) to
     node [sloped,below] {\small }     (root);
     \end{tikzpicture} \tikzexternaldisable
     +  \tikzexternaldisable  \begin{tikzpicture}[scale=0.2,baseline=-0.1cm]
        \node at (0,0)  [dot,label= {[label distance=-0.2em]below: \scriptsize  $ j $} ] (root) {};
     \end{tikzpicture} \tikzexternaldisable 
  \cdot   \tikzexternaldisable  \begin{tikzpicture}[scale=0.2,baseline=-0.1cm]
        \node at (0,0)  [dot,label= {[label distance=-0.2em]below: \scriptsize  $ k $} ] (root) {};
     \end{tikzpicture} \tikzexternaldisable 
     \otimes  \tikzexternaldisable  \begin{tikzpicture}[scale=0.2,baseline=0.1cm]
        \node at (0,0)  [dot,label= {[label distance=-0.2em]below: \scriptsize  $ i $} ] (root) {};
         \node at (1,2)  [dot,label={[label distance=-0.2em]above: \scriptsize  $ 0 $}] (right) {};
         \node at (-1,2)  [dot,label={[label distance=-0.2em]above: \scriptsize  $ 0 $} ] (left) {};
            \draw[kernel1] (right) to
     node [sloped,below] {\small }     (root); \draw[kernel1] (left) to
     node [sloped,below] {\small }     (root);
     \end{tikzpicture} \tikzexternaldisable
     \\ & + \tikzexternaldisable  \begin{tikzpicture}[scale=0.2,baseline=-0.1cm]
        \node at (0,0)  [dot,label= {[label distance=-0.2em]below: \scriptsize  $ j $} ] (root) {};
     \end{tikzpicture} \tikzexternaldisable 
  \cdot   \tikzexternaldisable \begin{tikzpicture}[scale=0.2,baseline=0.1cm]
        \node at (0,0)  [dot,label= {[label distance=-0.2em]below: \scriptsize  $ i $} ] (root) {};
         \node at (0,2)  [dot,label={[label distance=-0.2em]above: \scriptsize  $ k$}] (right) {};
            \draw[kernel1] (right) to
     node [sloped,below] {\small }     (root);
     \end{tikzpicture} \tikzexternaldisable
     \otimes  \tikzexternaldisable \begin{tikzpicture}[scale=0.2,baseline=0.1cm]
        \node at (0,0)  [dot,label= {[label distance=-0.2em]below: \scriptsize  $ 0 $} ] (root) {};
         \node at (0,2)  [dot,label={[label distance=-0.2em]above: \scriptsize  $ 0$}] (right) {};
            \draw[kernel1] (right) to
     node [sloped,below] {\small }     (root);
     \end{tikzpicture} \tikzexternaldisable
     + \tikzexternaldisable  \begin{tikzpicture}[scale=0.2,baseline=-0.1cm]
        \node at (0,0)  [dot,label= {[label distance=-0.2em]below: \scriptsize  $ k $} ] (root) {};
     \end{tikzpicture} \tikzexternaldisable 
  \cdot   \tikzexternaldisable \begin{tikzpicture}[scale=0.2,baseline=0.1cm]
        \node at (0,0)  [dot,label= {[label distance=-0.2em]below: \scriptsize  $ i $} ] (root) {};
         \node at (0,2)  [dot,label={[label distance=-0.2em]above: \scriptsize  $ j$}] (right) {};
            \draw[kernel1] (right) to
     node [sloped,below] {\small }     (root);
     \end{tikzpicture} \tikzexternaldisable
     \otimes  \tikzexternaldisable \begin{tikzpicture}[scale=0.2,baseline=0.1cm]
        \node at (0,0)  [dot,label= {[label distance=-0.2em]below: \scriptsize  $ 0 $} ] (root) {};
         \node at (0,2)  [dot,label={[label distance=-0.2em]above: \scriptsize  $ 0$}] (right) {};
            \draw[kernel1] (right) to
     node [sloped,below] {\small }     (root);
     \end{tikzpicture} \tikzexternaldisable
+  \tikzexternaldisable  \begin{tikzpicture}[scale=0.2,baseline=-0.1cm]
        \node at (0,0)  [dot,label= {[label distance=-0.2em]below: \scriptsize  $ i $} ] (root) {};
     \end{tikzpicture} \tikzexternaldisable  \cdot
     \tikzexternaldisable  \begin{tikzpicture}[scale=0.2,baseline=-0.1cm]
        \node at (0,0)  [dot,label= {[label distance=-0.2em]below: \scriptsize  $ j $} ] (root) {};
     \end{tikzpicture} \tikzexternaldisable 
     \cdot
      \tikzexternaldisable  \begin{tikzpicture}[scale=0.2,baseline=-0.1cm]
        \node at (0,0)  [dot,label= {[label distance=-0.2em]below: \scriptsize  $ k $} ] (root) {};
     \end{tikzpicture} \tikzexternaldisable 
     \otimes  \tikzexternaldisable  \begin{tikzpicture}[scale=0.2,baseline=0.1cm]
        \node at (0,0)  [dot,label= {[label distance=-0.2em]below: \scriptsize  $ 0 $} ] (root) {};
         \node at (1,2)  [dot,label={[label distance=-0.2em]above: \scriptsize  $ 0 $}] (right) {};
         \node at (-1,2)  [dot,label={[label distance=-0.2em]above: \scriptsize  $ 0 $} ] (left) {};
            \draw[kernel1] (right) to
     node [sloped,below] {\small }     (root); \draw[kernel1] (left) to
     node [sloped,below] {\small }     (root);
     \end{tikzpicture} \tikzexternaldisable .
     \end{aligned}
     \end{equs}

\begin{proposition} \label{prop:adjointMv} 
Let $v = v_0 \in \BB^*$. The dual map $M_v^* : \BPrimal \rightarrow  \BPrimal$ is given by
\[
M_v^* \tau = (v \otimes \id)\circ \extract (\tau),
\]
where $v(\tau_1 \cdot \ldots \cdot \tau_k) := \scal{\tau_1,v}\ldots\scal{\tau_k, v}$ and $v(\mathbf{1}) := 1$.
\end{proposition}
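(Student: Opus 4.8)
The plan is to recast the claim as an identity of linear maps and appeal to the characterisation of $M_v$ from Theorem~\ref{thm:branchedAlgMorph}. Set $\Phi := (v \otimes \id)\circ \extract : \BPrimal \to \BPrimal$. Contracting a non-empty family of disjoint subtrees of a forest never increases the number of vertices, and the extracted legs are then paired off against $v$, so $\Phi$ does not increase the grading; hence its transpose $\Phi^* : \BDual \to \BDual$ is well defined and, by reflexivity of the graded pairing, $\Phi^* = M_v$ is equivalent to $M_v^* = \Phi$, which is what we must prove. Now by Theorem~\ref{thm:branchedAlgMorph}, $M_v$ is the \emph{unique} algebra morphism of $(\BDual,\star)$ whose restriction to $\BB^*$ is a pre-Lie morphism for $\curvearrowright$ and which sends $\bullet_i \mapsto \bullet_i + v_i$ for each $i$; so it suffices to verify that $\Phi^*$ is an algebra morphism of $(\BDual,\star)$, that $\Phi^*(\BB^*)\subseteq\BB^*$ with the restriction a pre-Lie morphism, and that $\Phi^*(\bullet_i) = \bullet_i + v_i$.

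Several of the ingredients are routine. First, $\extract$ is by construction an algebra morphism $(\BPrimal,\poly) \to (\A \otimes \BPrimal, \MM_{\A\otimes\BPrimal})$, and $v : (\A,\cdot) \to \R$ is an algebra morphism by freeness of $\A$; hence $\Phi$ is multiplicative for $\poly$ and preserves the counit, and dually $\Phi^*$ is a unital coalgebra morphism for $\Delta_\poly$. Second, for a single tree $\tau$ every forest-leg $\tilde\tau$ occurring in $\extract\,\tau$ is itself a single tree (contracting disjoint connected subtrees of a tree produces a tree), so $\Phi(\BB)\subseteq\BB$; dually $\Phi^*(\BB^*)\subseteq\BB^*$, and moreover $\gen{\Phi^* w, \tau} = \gen{w, \Phi\tau} = 0$ whenever $\tau\in\BB$ and $w$ is a product of at least two trees, i.e.\ $\pi_\BB\,\Phi^* w = 0$ for such $w$. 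Third, from $\extract(\bullet_i) = \mathbf 1 \otimes \bullet_i + \bullet_i \otimes \bullet_0$ one gets $\Phi(\bullet_i) = \bullet_i + \gen{\bullet_i, v}\,\bullet_0$, and transposing (using $v = v_0 \in \BB^*$) yields $\Phi^*(\bullet_i) = \bullet_i + v_i$.

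The remaining and main point is that $\Phi^*$ is an algebra morphism for $\star$; since $\star$ on $\BDual$ is dual to $\Delta_\star$ on $\BPrimal$, this is equivalent to the identity
\[
\Delta_\star \circ \Phi = (\Phi \otimes \Phi) \circ \Delta_\star \qquad\text{on } \BPrimal .
\]
Both sides are algebra morphisms from $(\BPrimal,\poly)$ into $\BPrimal \otimes \BPrimal$ with the componentwise product ($\Delta_\star$ is a bialgebra coproduct and $\Phi$ is multiplicative), so by freeness of $(\BPrimal,\poly)$ over $\BB$ it suffices to check the identity on a single tree $\tau$. There it is the statement ``contract-then-cut $=$ cut-then-contract'', and I would prove it by a bijection between, on one side, pairs $(E,\bar c)$ consisting of a family $E$ of disjoint subtrees of $\tau$ and an admissible cut $\bar c$ of the tree $\tilde\tau$ obtained from $\tau$ by contracting the members of $E$, and, on the other side, pairs $(c,E)$ consisting of an admissible cut $c$ of $\tau$ and a family $E$ of disjoint subtrees each contained entirely in one component of $c$: the point is that edges interior to an extracted subtree are destroyed by contraction, so admissible cuts of $\tilde\tau$ are exactly the admissible cuts of $\tau$ avoiding those interiors, after which each extracted subtree lies wholly in the branches or in the trunk. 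Under this bijection the weight $v(E)$ factors according to the branch/trunk split and the two tensor legs agree on both sides, which gives the identity. This combinatorial bijection is the real obstacle --- it is the avatar, in the extraction/admissible-cut language, of the co-interaction of the two Hopf algebras in \cite{BHZ16} --- while everything else is formal.

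Granting the crux, $\Phi^*$ is an algebra morphism for $\star$ preserving $\BB^*$. Writing $\tau_1\star\tau_2 = \tau_1\curvearrowright\tau_2 + w$ with $w$ a sum of products of at least two trees, applying $\Phi^*$, projecting by $\pi_\BB$, and invoking the second routine point gives $\Phi^*(\tau_1\curvearrowright\tau_2) = \Phi^*(\tau_1)\curvearrowright\Phi^*(\tau_2)$, so $\Phi^*|_{\BB^*}$ is a pre-Lie morphism. Combined with $\Phi^*(\bullet_i)=\bullet_i+v_i$ and the uniqueness in Theorem~\ref{thm:branchedAlgMorph}, this forces $\Phi^* = M_v$, hence $M_v^* = \Phi$, which is the assertion.
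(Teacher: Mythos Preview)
Your proposal is correct, but it takes a heavier route than the paper. The key difference lies in the ``crux'': you prove the full cointeraction $\Delta_\star \circ \Phi = (\Phi \otimes \Phi)\circ\Delta_\star$ (extract--then--admissible-cut equals admissible-cut--then--extract), which involves \emph{all} admissible cuts, and then deduce that $\Phi^*$ is a $\star$-algebra morphism before projecting to obtain the pre-Lie property. The paper instead establishes only the single-cut analogue, Lemma~\ref{lem:adjointComm}, namely $(\id\otimes\curvearrowright^*)\extract = \MM_{1,3}(\extract\otimes\extract)\curvearrowright^*$; this directly yields $\curvearrowright^*\Phi = (\Phi\otimes\Phi)\curvearrowright^*$, hence $\Phi^*$ is a pre-Lie morphism on $\BB^*$, and the free pre-Lie property (Theorem~\ref{thm:freePreLie}) then forces $\Phi^* = M_v$ on $\BB^*$. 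The extension to all of $\BPrimal$ uses that both $\Phi$ and $M_v^*$ are $\poly$-multiplicative (the latter because $M_v$ is a Hopf morphism, Lemma~\ref{lem:HopfMorphBranched}).

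What each approach buys: the paper's single-cut lemma is combinatorially lighter, and its proof is a few lines, whereas your full $\Delta_\star$-cointeraction is the genuine Calaque--Ebrahimi-Fard--Manchon cointeraction \cite{Calaque11} and is a stronger statement in its own right. Your derivation of the pre-Lie property from the $\star$-morphism property (via $\pi_\BB$ and the observation $\pi_\BB\Phi^* w = 0$ on genuine forests) is neat but becomes unnecessary once one has Lemma~\ref{lem:adjointComm}. In short, you prove more than you need; the paper finds the minimal combinatorial input.
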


For the proof of Proposition~\ref{prop:adjointMv}, we require the following combinatorial lemma. We note that similar ``cointeraction'' results appear for closely related algebraic structures in~\cite[Thm 8]{Calaque11}  and~\cite[Thm 5.37]{BHZ16}. We will particularly discuss in further detail the link with the work of~\cite{BHZ16} in Section~\ref{sec:renorm}.

\begin{lemma}\label{lem:adjointComm}
Let $\curvearrowright^*: \BB \rightarrow \BB \otimes \BB$ denote the adjoint of $\curvearrowright$. It holds that
\begin{equation}\label{eq:triangDelta}
(\id \otimes \curvearrowright^*)\extract = \MM_{1,3}(\extract \otimes \extract)\curvearrowright^*,
\end{equation}
where $\MM_{1,3} : \A \otimes \BB \otimes \A \otimes \BB \rightarrow \A \otimes \BB \otimes \BB$ is the linear map defined by ${\MM_{1,3}(\tau_1\otimes \tau_2\otimes\tau_3\otimes\tau_4)} = \tau_1\tau_3\otimes \tau_2\otimes\tau_4$.
\end{lemma}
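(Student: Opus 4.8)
The identity \eqref{eq:triangDelta} is a "cointeraction"-type compatibility between the subtree-extraction map $\extract$ and the adjoint pre-Lie coproduct $\curvearrowright^*$. I would prove it by evaluating both sides against an arbitrary pair of test elements and converting everything back to a statement about the \emph{primal} operations, where the maps are concrete. Specifically, fix a tree $\tau \in \BB$ and dualize: \eqref{eq:triangDelta} is equivalent to the assertion that for all $a \in \A^*$ (a forest in the $\star$-sense, i.e.\ an element of $(\BPrimal,\star)$ or rather its graded dual paired with $\A$), and all trees $\sigma, \sigma' \in \BB$,
\[
\gen{(\id\otimes\curvearrowright^*)\extract\,\tau,\; a\otimes\sigma\otimes\sigma'} = \gen{\MM_{1,3}(\extract\otimes\extract)\curvearrowright^*\tau,\; a\otimes\sigma\otimes\sigma'}.
\]
On the left, $\curvearrowright^*$ acting in the second slot pairs with $\sigma\otimes\sigma'$ to recover $\sigma\curvearrowright\sigma'$, so the left side equals $\gen{\extract\,\tau,\; a\otimes(\sigma\curvearrowright\sigma')}$: this counts the ways of choosing a disjoint family of subtrees of $\tau$ whose product is (dual to) $a$ and whose contraction $\tilde\tau$ equals a tree appearing in $\sigma\curvearrowright\sigma'$ — i.e.\ $\tilde\tau$ is obtained by grafting $\sigma$ somewhere onto $\sigma'$. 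On the right, $\curvearrowright^*\tau$ decomposes $\tau$ as a grafting $\tau = \tau'\curvearrowright\tau''$, i.e.\ singles out a subtree $\tau'$ of $\tau$ hanging off a vertex of the complementary tree $\tau''$; then $\extract$ is applied to each of $\tau',\tau''$ independently and $\MM_{1,3}$ multiplies the $\A$-components while keeping the two contracted trees separate.

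The core of the argument is then a bijection between the two indexing sets. Given a grafting decomposition $\tau = \tau'\curvearrowright\tau''$ together with a subtree-family of $\tau'$ (with contraction $\widetilde{\tau'}$) and a subtree-family of $\tau''$ (with contraction $\widetilde{\tau''}$), one produces a single subtree-family of $\tau$: namely the union of the two families, \emph{unless} the root of $\tau'$ lies inside one of the extracted subtrees of $\tau'$ \emph{and} the grafting vertex of $\tau''$ lies inside one of the extracted subtrees of $\tau''$, in which case those two subtrees must be amalgamated into a single subtree of $\tau$ (the grafting edge now lies inside an extracted subtree). Conversely, given a subtree-family of $\tau$ and a way of writing the contraction $\tilde\tau$ as a grafting $\sigma\curvearrowright\sigma'$ — equivalently, a choice of edge in $\tilde\tau$ at which to split — one reconstructs a unique grafting edge in $\tau$ (either the split edge survives into $\tau$, or it lies inside the extracted subtree that $\sigma$ became) and hence a decomposition $\tau = \tau'\curvearrowright\tau''$ with compatible subtree-families. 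Checking this correspondence is a bijection, and that it matches contracted labels correctly (all new nodes labelled $0$, which is consistent on both sides), gives the equality of the two sums. I would organize this as: (i) restate \eqref{eq:triangDelta} in the pairing form above; (ii) spell out both combinatorial indexing sets explicitly; (iii) describe the map from right to left (merge the subtree families, amalgamating along the grafting edge when it gets swallowed); (iv) describe the inverse (split along an edge of $\tau$, resolving whether the split edge is an original edge or an internal edge of an extracted subtree); (v) verify it is a bijection preserving all data.

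The main obstacle is the bookkeeping in the boundary case where the grafting edge of the decomposition $\tau = \tau'\curvearrowright\tau''$ is absorbed into an extracted subtree — here one subtree of $\tau$ corresponds to a \emph{pair} (one from $\tau'$, one from $\tau''$), and one must check that this is exactly compensated by the fact that, on the left-hand side, the single contracted node sitting in $\tilde\tau$ where that subtree was can be grafted at, producing the grafting structure in $\sigma\curvearrowright\sigma'$. Keeping the multiplicities straight (the coefficient $n(\tau_1,\tau_2,\tau)$ in the definition of $\curvearrowright$ counts single admissible cuts, and the number of subtree-families is a plain set count) is where care is needed; once the bijection is set up correctly all coefficients are $1$ on both sides and the identity follows. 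It may be cleanest to prove \eqref{eq:triangDelta} first on single trees $\tau$ (both sides are multiplicative-compatible enough that the general statement on $\BPrimal$ will not actually be needed, since $\curvearrowright^*$ and $\extract$ are only being compared on $\BB$), which simplifies the combinatorics.
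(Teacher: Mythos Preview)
Your overall plan—setting up a bijection between the indexing sets on the two sides, organized by the choice of grafting edge in $\tau$—is exactly the paper's approach, and it works. However, you have introduced a spurious complication that would derail the argument if pursued.

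The ``boundary case'' you describe does not exist. On the right-hand side, once you cut $\tau$ at an edge $e$ into branch $b_c$ and trunk $\tau_c$, any subtree family extracted from $b_c$ lives entirely above $e$ and any family from $\tau_c$ lives entirely below $e$; their union is automatically a valid disjoint subtree family of $\tau$ \emph{not containing $e$}. There is no need (and no justification) for amalgamating a subtree of $b_c$ containing its root with a subtree of $\tau_c$ containing the grafting vertex: as vertex-disjoint subtrees of $\tau$ they remain separate, the edge $e$ between them is simply in neither, and after contracting each to a point the edge $e$ survives in $\tilde\tau$, connecting the two new $0$-labelled nodes. Conversely, every edge of $\tilde\tau$ is literally an edge of $\tau$ (contraction only deletes edges, never creates them), so your parenthetical ``either the split edge survives into $\tau$, or it lies inside the extracted subtree that $\sigma$ became'' has an empty second alternative.

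If you attempted the amalgamation you propose, the $\A$-component on the two sides would no longer match (one merged subtree on the left versus two on the right), and your bijection would fail. Drop the boundary case entirely: the map $(e, F_1, F_2) \mapsto (F_1 \cup F_2, e)$ from right-hand data to left-hand data is already a bijection, with inverse $(F, e) \mapsto (e,\, F\cap b_c,\, F\cap \tau_c)$, and the contracted trees agree because contracting $\tau$ along $F$ and then cutting at $e$ is manifestly the same as cutting at $e$ first and contracting each piece along its share of $F$. That is the paper's argument in full, and it is shorter than you anticipate.
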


\begin{proof}
Note that
\[
\curvearrowright^* \tau = \sum_{c} b_c \otimes \tau_c
\]
where the sum runs of all \emph{single} admissible cuts $c$ of $\tau$, and $b_c$ is the branch, $\tau_c$ the trunk of $c$.
Consider a single cut $c$ of $\tau$ across an edge $e$. Let $\tau^c$ denote the sum of the terms of $(\id \otimes \curvearrowright^*)\extract \tau$ obtained by contracting all collections 
of subtrees of $\tau$ which do not contain $e$, followed by a cut (on the second tensor) along the edge $e$ (which necessarily remains). One immediately sees that $\tau^c$ is 
equivalently given by first cutting along $e$, and then contracting along all collections of subtrees of $b_c$ and $\tau_c$, and then grouping the extracted subtrees together, i.e., $\tau^c 
= \MM_{1,3}(\extract \otimes \extract)(b_c\otimes \tau_c)$.
It finally remains to observe that summing over all single cuts $c$ gives~\eqref{eq:triangDelta}.
\end{proof}

\begin{proof}[Proof of Proposition~\ref{prop:adjointMv}]
Denote by
\[
\Phi = (v \otimes \id)\circ \extract : \BB \rightarrow \BB.
\]
By duality, it follows from Lemma~\ref{lem:HopfMorphBranched} that $M_v^*$ is a Hopf algebra morphism. In particular, it suffices to show that $\Phi\tau = M_v^* \tau$ for every tree $\tau \in \BB$.

To this end, observe that Lemma~\ref{lem:adjointComm} implies $\curvearrowright^* \Phi = (\Phi \otimes \Phi)\curvearrowright^*$, from which it follows that $\Phi^* : \BB^* \rightarrow \BB^*$ is a pre-Lie algebra morphism. Furthermore, for every tree $\tau \in \BB$
\begin{align*}
\text{for all } i \in\{1,\ldots,d\}, \; \; &\scal{\Phi^* \bullet_i, \tau} = \scal{\bullet_i, \Phi \tau} = \scal{\bullet_i, \tau} = \scal{M_v\bullet_i, \tau}; \\
&\scal{\Phi^* \bullet_0, \tau} = \scal{\bullet_0, \Phi \tau} = \scal{\bullet_0, \tau} + \scal{v,\tau} = \scal{M_v\bullet_0, \tau}.
\end{align*}
It follows that $\Phi^*$ is a pre-Lie algebra morphism on $(\BB^*,\curvearrowright)$ which agrees with $M_v$ on the set $\{\bullet_0,\ldots,\bullet_d\} \subset \BB^*$. Hence, by the universal property of $(\BB,\curvearrowright)$ (Theorem~\ref{thm:freePreLie}), $\Phi^*$ agrees with $M_v$ on all of $\BB^*$, which concludes the proof.
\end{proof} 

\begin{remark}\label{remark:generalBranched}
A similar result to Proposition~\ref{prop:adjointMv} holds for the general case $v = (v_0,\ldots, v_d)$. The definition of $\extract$ changes in the obvious way that in the second tensor, instead of replacing every subtree by the node $\bullet_0$, one instead replaces every combination of subtrees by all combinations of $\bullet_i$, $i \in \{0,\ldots, d\}$, while in the first tensor, one marks each extracted subtree $\tau_j$ with the corresponding label $i \in \{0, \ldots, d\}$ that replaced it, which gives $(\tau_j)_i$ (so the left tensor no longer belongs to $\A$ but instead to the free commutative algebra generated by $(\tau)_i$, for all trees $\tau \in \BB$ and labels $i \in \{0,\ldots, d\}$). Finally the term $\scal{\tau_1,v}\ldots\scal{ \tau_k, v}$ would then be replaced by $\scal{(\tau_1)_{i_1}, v_{i_1}}\ldots \scal{(\tau_k)_{i_k}, v_{i_k}}$.
\end{remark}

\section{Examples} \label{sec:Ex}

In the following examples, we assume that we are given a probability space $(\Omega, \mathcal{F},\mathbb{P})$ and a filtration $(\mathcal{F}_t)_{t\geq 0}$ satisfying the usual hypotheses and to which all mentioned stochastic processes are adapted.

\subsection{It\^{o}-Stratonovich conversion}\label{subsec:ItoStrat}

As an application of Proposition~\ref{prop:adjointMv}, we illustrate how to re-express iterated Stratonovich integrals (and products thereof) over some interval $\left[s,t \right] $ as It{\^o} integrals. Consider the $\R^{1+d}$-valued process $B_t = (B^0_t, B^1_t, \ldots, B^d_t)$, where $(B^1_t, \ldots, B^d_t)$ is an $\R^d$-valued Brownian motion with covariance $[B^i,B^j]_t = C^{i,j}t$, and $B^0_t \equiv t$ denotes the time component. Let $\B^{\Strat}$ denote the enhancement of $B_t$ to an $\alpha$-H{\"o}lder branched rough path, $\alpha \in (0,1/2)$, using Stratonovich iterated integrals. For example,  
\begin{eqnarray}
\left\langle \B^\Strat_{s,t},\tau \right\rangle  &=&\underset{
s<t_{1}<\dots <t_{m}<t}{\int \cdots \int }\circ dB_{t_{1}}^{i_{1}}\circ
\dots \circ dB_{t_{m}}^{i_{m}}   \label{equ:BStr}   \\  
\text{for the linear tree \ }\tau  &=& [\ldots [\bullet_{i_1}]_{\bullet_{i_2}} \ldots ]_{\bullet_{i_m}}, \; \; i_1,\ldots,i_m \in \{0,\ldots, d\},   \nonumber
\end{eqnarray}
and
\begin{eqnarray*}
\left\langle \B^\Strat_{s,t},\tau \right\rangle 
&=&\int_{s}^{t}B_{u}^{j}B_{u}^{k}\circ dB_{u}^{i} \\
\text{for }\tau  &=&[\bullet_j \bullet_k]_{\bullet_i}, \; \; i,j,k \in \{0,\ldots, d\}.
\end{eqnarray*}
Similarly, we define $\B^\Ito$ in exactly the same way using It\^{o} integrals.

For a tree $\tau \in \BB$, recall the definition of $D(\tau) \subset \A \otimes \BB$ from Section~\ref{subsec:DualMapBranched} (which was used to define $\delta$).
{ Consider the function $C : D(\tau) \to \R$ defined by
\[
C(\tau_1\cdot\ldots\cdot \tau_k \otimes \tilde \tau) =
\begin{cases}
1 &\textnormal{ if } \tau_1\cdot\ldots\cdot \tau_k \otimes \tilde \tau = \mathbf{1} \otimes \tau
\\
2^{-k} \prod_{n=1}^k C_n^{i_n,j_n} &\textnormal{ if } \tau_n = [\bullet_{i_n}]_{\bullet_{j_n}} \textnormal{ for all } n=1,\ldots, k
\\
0 &\textnormal{ otherwise}.
\end{cases}
\]
}

\begin{proposition}
For every tree $\tau \in \BB$ it holds that
\begin{equation}\label{eq:ItoStratConv}
\scal{\B^\Strat_{s,t},\tau} = {\sum_{\tau_1 \cdot \ldots \cdot \tau_k \otimes \tilde \tau \in D(\tau)} C(\tau_1\cdot\ldots\cdot \tau_k \otimes \tilde \tau) \scal{\B^\Ito_{s,t}, \tilde \tau}.}
\end{equation}
\end{proposition}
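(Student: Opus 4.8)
The plan is to exhibit the Stratonovich lift as a \emph{translate} of the Itô lift and then read off~\eqref{eq:ItoStratConv} from Proposition~\ref{prop:adjointMv}. Set
\[
v := \tfrac12 \sum_{i=1}^d [\bullet_i]_{\bullet_i} \ \in\ \BB^*(\R^d),
\]
regarded as a perturbation of the $\bullet_0$-direction only (so $v = v_0$, $v_i = 0$ for $i=1,\ldots,d$), and let $M_v : \BDual \mapsto \BDual$ be the associated translation map. The first — and main — step is to prove that $\B^\Strat_{s,t} = M_v\,\B^\Ito_{s,t}$ for all $0\le s\le t\le T$ almost surely. Granting this, the Proposition follows by duality: Proposition~\ref{prop:adjointMv} gives
\begin{align*}
\gen{\B^\Strat_{s,t},\tau} &= \gen{M_v\B^\Ito_{s,t},\tau} = \gen{\B^\Ito_{s,t},\, M_v^*\tau} \\
&= \sum_{\tau_1\cdot\ldots\cdot\tau_k\otimes\tilde\tau\,\in\, D(\tau)} \gen{\tau_1,v}\ldots\gen{\tau_k,v}\,\gen{\B^\Ito_{s,t},\tilde\tau}.
\end{align*}
By construction $\gen{\sigma,v} = \tfrac12$ if $\sigma$ is one of the cherries $[\bullet_1]_{\bullet_1},\ldots,[\bullet_d]_{\bullet_d}$, and $\gen{\sigma,v} = 0$ for every other tree $\sigma$ (in particular $\gen{[\bullet_0]_{\bullet_0},v} = 0$, reflecting that $B^0=t$ has no quadratic variation). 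Hence the only surviving summands are those for which each extracted subtree $\tau_j$ is a cherry $[\bullet_{i_j}]_{\bullet_{i_j}}$ with $i_j\ge 1$, each contributing the factor $\tfrac12$; this is exactly the sum over $\hat D(\tau)$ with weights $(\tfrac12)^k$, which is~\eqref{eq:ItoStratConv}.

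To prove $\B^\Strat = M_v\B^\Ito$ I would first verify it on trees with at most two vertices. For a single vertex it is the trivial identity $\gen{\B^\Strat_{s,t},\bullet_i} = \gen{\B^\Ito_{s,t},\bullet_i}$; for a cherry $[\bullet_j]_{\bullet_i}$ one computes from Proposition~\ref{prop:adjointMv} that $M_v^*[\bullet_j]_{\bullet_i} = [\bullet_j]_{\bullet_i} + \tfrac12\delta^{ij}\bullet_0$, so that $\gen{M_v\B^\Ito_{s,t},[\bullet_j]_{\bullet_i}} = \gen{\B^\Ito_{s,t},[\bullet_j]_{\bullet_i}} + \tfrac12\delta^{ij}(t-s)$, which matches the classical formula $\int_s^t (B^j_u - B^j_s)\circ dB^i_u = \int_s^t(B^j_u - B^j_s)\,dB^i_u + \tfrac12\delta^{ij}(t-s)$ since $t-s = \gen{\B^\Ito_{s,t},\bullet_0}$. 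Both $\B^\Strat_{s,t}$ and $M_v\B^\Ito_{s,t}$ are group-like and satisfy Chen's relation — the latter because $M_v$ is a Hopf algebra morphism by Lemma~\ref{lem:HopfMorphBranched} — hence both define branched rough paths; since they agree on the levels built by stochastic integration, the uniqueness part of the extension theorem forces them to coincide on all trees, cf.\ the Remark following~\eqref{equ:BStr}. The point requiring care here, and what I expect to be the main obstacle, is to check that $M_v\B^\Ito$ actually lies in the space to which that uniqueness applies: since $M_v^*$ sends a tree $\tau$ to a combination of trees $\tilde\tau$ obtained by contracting $k\ge0$ cherries into $k$ vertices labelled $0$, and a vertex labelled $0$ contributes a factor $O(|t-s|)$ rather than $O(|t-s|^\alpha)$, one gets $\gen{\B^\Ito_{s,t},\tilde\tau}\lesssim |t-s|^{(|\tau|-2k)\alpha + k} \le |t-s|^{\alpha|\tau|}$, where $\alpha<1/2$ is used; so $M_v\B^\Ito$ obeys the required estimates and the argument closes.

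Alternatively, one can prove~\eqref{eq:ItoStratConv} directly by induction on the number of vertices of $\tau$, bypassing the translation map. Writing $\tau = [\sigma_1\ldots\sigma_\ell]_{\bullet_i}$ and expanding $\gen{\B^\Strat_{s,t},\tau} = \int_s^t \prod_m \gen{\B^\Strat_{s,u},\sigma_m}\circ dB^i_u$, the Stratonovich-to-Itô conversion $\int\!\circ\,dB^i = \int dB^i + \tfrac12[\,\cdot\,,B^i]$ together with the Leibniz rule for quadratic covariation produces, besides the main Itô term, one correction for each child subtree $\sigma_m$ whose root carries the same label $i$ as the root of $\tau$; such a correction contracts the edge joining that root to the root of $\tau$, relabels the resulting vertex by $0$ (the covariation being of $dt$-type), and carries the factor $\tfrac12$ — i.e.\ it corresponds to extracting one cherry $[\bullet_i]_{\bullet_i}$ in the sense of $\hat D(\tau)$ — and one then applies the inductive hypothesis to the remaining, strictly smaller, iterated integrals. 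Reconciling the combinatorics of these nested corrections with the definition of $\hat D(\tau)$ is the only real work along this second route.
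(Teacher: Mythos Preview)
Your first route is exactly the paper's proof: set $v=\tfrac12\sum_{i=1}^d[\bullet_i]_{\bullet_i}$, identify $\B^\Strat=M_v\B^\Ito$ by checking levels $1$ and $2$ and invoking the uniqueness part of the extension theorem, then read off the formula via Proposition~\ref{prop:adjointMv}. The only cosmetic difference is that where you spell out the $\alpha$-H\"older estimate for $M_v\B^\Ito$ by hand (your bound $|t-s|^{(|\tau|-2k)\alpha+k}\le|t-s|^{\alpha|\tau|}$ using $\alpha<1/2$), the paper defers this to Theorem~\ref{thm:transRPs}\ref{point:trans2}; and the paper is explicit that one needs $\alpha\in(1/3,1/2)$ so that levels $1$ and $2$ determine the full lift. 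Your alternative inductive route via the It\^o--Stratonovich correction and Leibniz rule for quadratic covariation is a genuine second proof not in the paper; it is more self-contained (no translation machinery needed) but, as you note, the bookkeeping of nested cherry extractions against $\hat D(\tau)$ is the real work there.
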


\begin{proof}
Consider the sum of linear trees {$v = v_0 = \frac{1}{2}\sum_{i,j=1}^d C^{i,j}[\bullet_{i}]_{\bullet_j} \in \BB^2(\R^d)$}.
One can readily verify that $\B^\Strat = M_v(\B^\Ito)$, understood in the pointwise sense $\B^\Strat_{s,t} = M_v(\B^\Ito_{s,t})$.
Indeed, both $\B^\Strat $ and $M_v(\B^\Ito)$ are a.s. ``full'' $\alpha$-H\"older rough paths, where this fact - in the case of $M_v(\B^\Ito)$ - either requires an (easy) check by hand, or an appeal to Theorem~\ref{thm:transRPs},~\ref{point:trans2}, below.
Since, by construction, both agree on the first two levels, and $\alpha \in (1/2,1/3)$, we see that $\B^\Strat $ and $M_v(\B^\Ito)$ must be equal, a.s., thanks to the uniqueness part of the extension theorem. 

It then follows by Proposition~\ref{prop:adjointMv} that
\[
\scal{\B^\Strat_{s,t}, \tau} = \scal{\B^\Ito_{s,t}, M_v^* \tau} = \sum_{\tau_1 \cdot \ldots \cdot \tau_k \otimes \tilde \tau \in D(\tau)} \scal{\B^\Ito_{s,t}, \scal{v,\tau_1}\ldots \scal{v,\tau_k} \tilde \tau}.
\]
Since $\scal{v,\mathbf{1}} = 1$, while {$\scal{v, \tau_n} = \frac12 C^{i,j}$ if $\tau_n = [\bullet_i]_{\bullet_j}$} and zero otherwise, we obtain precisely~\eqref{eq:ItoStratConv}.
\end{proof}

\begin{example}
{ Suppose $B$ is a standard Brownian motion, i.e., $C^{i,j} = \delta_{ij}$.
Consider the tree} $\tau =[\bullet_j \bullet_k]_{\bullet_i}$, so that
\[
\scal{\B^\Strat_{s,t},\tau} =\int_{s}^{t} B_{u}^{j}B_{u}^{k}\circ dB_{u}^{i}.
\]
{Recalling the explicit form of $\extract \tau$ in~\eqref{eq:deltaExample}, we see that if $i$ is distinct from both $j,k$, then only $\mathbf{1}\otimes \tau$ remains in $D(\tau)$ for which $C$ is non-zero,} and so (in trivial agreement with stochastic calculus)
\[
\scal{\B^\Strat_{s,t},\tau} = \scal{\B^\Ito_{s,t},\tau}.
\]
On the other hand,if $i=j\neq k$, an
additional term $[\bullet_i]_{\bullet_i} \otimes [\bullet_k]_{\bullet_0}$ { appears in $D(\tau)$ at which $C$ is $\frac12$,} and so
\begin{eqnarray*}
\scal{ \B^\Strat_{s,t},\tau} &=& \scal{\B^\Ito_{s,t},\tau} +\frac{1}{2} \underset{s<t_{1}<t_{2}<t}{\int \int }
dB_{t_{1}}^{k}dB_{t_{2}}^{0} \\
&=&\scal{\B^\Ito_{s,t},\tau}  +\frac{1}{2}
\int_{s}^{t} B_{u}^{k} du.
\end{eqnarray*}
The case $i=k\neq j$ is identical.
At last, in the case $i=j=k$, looking at $\extract \tau$ shows that
\begin{eqnarray*}
\scal{\B^\Strat,\tau }  &=& \scal{\B^\Ito,\tau } +\frac{1}{2}\underset{s<t_{1}<t_{2}<t}{\int \int } dB_{t_{1}}^{i}dB_{t_{2}}^{0}+\frac{1}{2}\underset{s<t_{1}<t_{2}<t}{\int \int 
}dB_{t_{1}}^{i}dB_{t_{2}}^{0} \\
&=& \scal{ \B^\Ito,\tau } +\int_{s}^{t} B_{u}^{i} du.
\end{eqnarray*}
\end{example}

\begin{remark}
When $\tau = [\ldots [\bullet_{i_1}]_{\bullet_{i_2}}\ldots ]_{\bullet_{i_m}}$ is a linear tree, this is in agreement with~\cite{BenArous89} Proposition 1. In fact, by considering general semi-martingales $X^1_t,\ldots, X^d_t$ and adding extra labels $\bullet_{i,j}$, $1 \leq i \leq j \leq d$ (thus increasing the underlying dimension from $d$ to $d + d(d+1)/2$) to encode the quadratic variants $[X_i,X_j]$, the above procedure (in the more general setting with elements $v_{ij} = [\bullet_i]_{\bullet_j} \in \BB^2(\R^d)$, see Remark~\ref{remark:generalBranched}), immediately provides an It{\^o}-Stratonovich conversion formula for general semi-martingales.
\end{remark}

\subsection{L\'evy rough paths} \label{sec:levy}

Note that the example in the previous section can be viewed as follows: $\B^\Ito$ and $\B^{\Strat}$ are both $\GG^2$-valued L{\'e}vy processes which are branched $p$-rough paths, $2 < p < 3$, and one can recover the signature of one from the other by a suitable (deterministic) translation map $M_v : \GG \rightarrow \GG$. We now consider a generalisation of this setting to arbitrary $\GG^N$-valued L{\'e}vy processes, which have already been studied in the context of rough paths in~\cite{FrizShekhar17, Chevyrev18}.

Let $\tau_1,\ldots, \tau_m$ be a basis for $\BB^N$ consisting of trees, which we identify with left-invariant vector fields on $\GG^N$, where we suppose for convenience that $\tau_1 = \bullet_0$.
Recall that $\GG^N$ is a homogenous group in the sense of~\cite{FollandStein82} (cf.~\cite{HairerKelly15} Remark~2.15).

Recall that to every (left) L{\'e}vy process $\Xbf$ in $\GG^N$ without jumps and with identity starting point (i.e., $\Xbf_0 = 1_{\GG^N}$  a.s.) there is an associated L{\'e}vy {tuple $(A,B)$,} where $B = \sum_{i=1}^m B^i \tau_i$ is an element of $\BB^N$ and $(A^{i,j})_{i,j=1}^m$ is a correlation matrix. Then the generator of $\Xbf$ is given for all $f \in C^2_0(\GG^N)$ by (see, e.g., \cite{Liao04})
\[
\lim_{t \rightarrow 0} t^{-1}\EEE{f(x\star\Xbf_t) - f(x)} = \sum_{i=1}^m B^i (\tau_i f) (x) + \frac{1}{2}\sum_{i,j=1}^m A^{i,j} (\tau_i\tau_j f)(x).
\]

\begin{lemma}\label{lem:LevyTriplet}
Let $M : \HH^N \rightarrow \HH^N$ be an algebra morphism which preserves $\GG^N$ and $\Xbf$ a L{\'e}vy process in $\GG^N$ with L{\'e}vy {tuple $(A,B)$.}

Then $M(\Xbf)$ is the (unique in law) $\GG^N$-valued (left) L{\'e}vy process with generator given for all $f \in C^2_0(\GG^N)$ by
\begin{equation}\label{eq:LevyGen}
\lim_{t \rightarrow 0} t^{-1}\EEE{f(x \star M\Xbf_t) - f(x)} = \sum_{i=1}^m B^i (M\tau_i f) (x) + \frac{1}{2}\sum_{i,j=1}^m A^{i,j} (M\tau_i M\tau_j f)(x).
\end{equation}
\end{lemma}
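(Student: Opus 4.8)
The plan is to observe that the generator identity~\eqref{eq:LevyGen} essentially writes itself once we know three things: that $M(\Xbf)$ is again a genuine $\GG^N$-valued L{\'e}vy process (so that its law is determined by its generator on $C^2_0(\GG^N)$), that $M$ being an algebra morphism for $\star$ means it intertwines left translation on the group, and that the right-hand side of~\eqref{eq:LevyGen} is nothing but the generator of $\Xbf$ applied to the pulled-back test function $f \circ (\text{left-translate by }M(\cdot))$ — up to the identification of $M$ acting on $\BB^N$ with its action as a differential operator on $\GG^N$. So the first step is to record precisely the dictionary between $M : \HH^N \mapsto \HH^N$ and the map it induces on functions: since $\GG^N = \exp_\star(\BB^N)$ and $\HH^N$ (with $\star$) is the enveloping algebra of $\BB^N$, realised as left-invariant differential operators on $\GG^N$, the condition that $M$ is an algebra morphism preserving $\GG^N$ means exactly that $x \mapsto M$ acts compatibly: for $f \in C^2_0$ and $x \in \GG^N$, one has $(M\tau_i f)(x)$ well-defined as the left-invariant vector field $M\tau_i$ applied to $f$, and $(M\tau_i M\tau_j f)(x)$ the second-order operator $M(\tau_i \star \tau_j)$ applied to $f$ — here using that $M(\tau_i \star \tau_j) = M\tau_i \star M\tau_j$.

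Next I would establish that $M(\Xbf)$ is a (left) L{\'e}vy process in $\GG^N$ with identity starting point and no jumps. Independence and stationarity of the left increments $M(\Xbf_s)^{-1}\star M(\Xbf_t) = M(\Xbf_s^{-1}\star \Xbf_t)$ follow immediately because $M$ is an algebra morphism for $\star$ (hence a group homomorphism on $\GG^N$, being the restriction of an algebra morphism to the group-like elements) and $\Xbf$ has independent stationary increments; $M(\Xbf_0) = M(1_{\GG^N}) = 1_{\GG^N}$; continuity (no jumps) is preserved since $M$ restricted to the finite-dimensional nilpotent group $\GG^N$ is a continuous map. Therefore $M(\Xbf)$ has a well-defined generator on $C^2_0(\GG^N)$ and its law is uniquely determined by that generator.

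Finally I would compute the generator. For $f \in C^2_0(\GG^N)$ and $x \in \GG^N$, write $g_x := f(x \star M(\cdot)) : \GG^N \mapsto \R$; since $M$ is a group homomorphism, $g_x(y) = f(x \star M y)$ and $\EEE{f(x \star M\Xbf_t) - f(x)} = \EEE{g_x(\Xbf_t) - g_x(1_{\GG^N})}$, so by the generator formula for $\Xbf$ recalled above,
\[
\lim_{t \to 0} t^{-1}\EEE{g_x(\Xbf_t) - g_x(1)} = \sum_{i=1}^m B^i (\tau_i g_x)(1) + \frac{1}{2}\sum_{i,j=1}^m A^{i,j}(\tau_i \tau_j g_x)(1).
\]
It then remains to chase through the chain rule: $(\tau_i g_x)(1)$ is the derivative at $y = 1$ of $f(x \star M y)$ in the direction of the left-invariant field $\tau_i$, which — because $M$ pushes the one-parameter subgroup $\exp_\star(s\tau_i)$ to $\exp_\star(s M\tau_i)$ (as $M$ is an algebra morphism, hence commutes with $\exp_\star$) — equals $(M\tau_i f)(x)$; and similarly $(\tau_i\tau_j g_x)(1) = (M\tau_i M\tau_j f)(x)$, using that $\tau_i\tau_j$ as an element of $\HH^N$ is sent by $M$ to $M\tau_i \star M\tau_j$ and that differentiating $g_x$ twice corresponds to applying the second-order operator $\tau_i\star\tau_j$. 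Substituting gives exactly~\eqref{eq:LevyGen}.

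The main obstacle I anticipate is purely one of bookkeeping rather than of genuine difficulty: making the identification of $M : \HH^N \mapsto \HH^N$ with a transformation of differential operators on $\GG^N$ fully rigorous, in particular verifying that ``$M$ is an algebra morphism preserving $\GG^N$'' is precisely the hypothesis needed for the chain-rule computation $(\tau_i\tau_j g_x)(1) = (M\tau_i M\tau_j f)(x)$ to go through, and being careful about whether one needs $f \in C^2_0$ or whether a localisation/smoothing argument (restricting to a bounded neighbourhood and using density of smooth functions) is required to justify the second-order Taylor expansion underlying the generator identity. Since $\GG^N$ is a finite-dimensional nilpotent Lie group and $M$ restricts there to a smooth group homomorphism, all of this is standard, but it is the only place where one must be attentive.
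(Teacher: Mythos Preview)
Your proposal is correct and follows essentially the same approach as the paper: show $M(\Xbf)$ is a L{\'e}vy process via the group homomorphism property, then pull back the test function through $M$ and apply the known generator of $\Xbf$, using that $M$ (being a $\star$-algebra morphism) commutes with $\exp_\star$ to identify $(\tau_i g_x)(1) = (M\tau_i f)(x)$ and likewise for second order. The only cosmetic differences are that the paper first reduces WLOG to $x = 1_{\GG^N}$ and writes $h = f\circ M$ (your $g_1$), and that the paper explicitly flags the issue you anticipate in your last paragraph --- namely that $h$ may fail to lie in $C^2_0(\GG^N)$ --- handling it by approximation just as you suggest.
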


\begin{proof}
The fact that $M\Xbf$ is a L{\'e}vy process is immediate from the fact that $\Xbf$ is a L{\'e}vy process and that $M : \GG^N \rightarrow \GG^N$ is a (continuous) group morphism. It thus only remains to show~\eqref{eq:LevyGen}, where we may suppose without loss of generality that $x=1_{\GG^N}$. To this end, define $h = f\circ M$ and observe that
\[
\lim_{t \rightarrow 0} t^{-1}\EEE{f(M\Xbf_t) - f(1_{\GG^N})} = \sum_{i=1}^m B^i (\tau_i h)(1_{\GG^N}) + \frac{1}{2}\sum_{i,j=1}^m A^{i,j} (\tau_i\tau_j h)(1_{\GG^N})
\]
(note that in general $h$ might fail to decay at infinity and thus not be an element of $C^2_0(\GG^N)$, however the above limit is readily justified by taking suitable approximations). Using the fact that $(\tau h)(x) = \frac{d}{dt} h(x\star e^{t\tau})\mid_{t=0}$, one can easily verify that for all $\sigma,\tau \in \BB^N$ and $x \in \GG^N$
\begin{align*}
(\tau h)(x) &= (M\tau f)(Mx), \\
(\sigma \tau h)(x) &= ((M\sigma) (M \tau) f )(Mx),
\end{align*}
from which~\eqref{eq:LevyGen} follows.
\end{proof}

We now specialise to the case that $(A^{i,j})_{i,j=1}^m$ is a correlation matrix for which $A^{i,i} = 0$ whenever $\tau_i$ has more than $\floor{N/2}$ nodes, which is a necessary and sufficient condition for the sample paths of $\Xbf$ to a.s. have finite $p$-variation for all $N < p < N+1$~\cite{Chevyrev18}. Assume also that $A^{i,i} = 0$ whenever $\tau_i$ contains a node with label $0$, and that $B = \tau_1 = \bullet_0$, so that for all $f \in C^2_0(\GG^N)$
\[
\lim_{t \rightarrow 0} t^{-1}\EEE{f(x\star\Xbf_t) - f(x)} = (\tau_1 f) (x) + \frac{1}{2}\sum_{i,j=1}^m A^{i,j} (\tau_i\tau_j f)(x).
\]
The drift term $(\tau_1 f) (x)$ should be interpreted as the time component of the branched rough path $\Xbf$ (which also explains the zero-diffusion condition in  the direction of trees with a label $0$).

Any other $\GG^N$-valued L{\'e}vy process $\tilde \Xbf$ without jumps and the same correlation matrix $(A^{i,j})_{i,j=1}^m$ is also a branched $p$-rough, and its generator differs from that of $\Xbf$ only by a drift term. As a consequence of Lemma~\ref{lem:LevyTriplet}, we see that every such $\tilde \Xbf$ can be constructed by applying a (deterministic) translation map $M_v$ to $\Xbf$. In particular, the full signature of $\tilde \Xbf$ can be recovered from that of $\Xbf$, generalising the example from Section~\ref{subsec:ItoStrat}.

\begin{corollary}
Let $v = v_0 \in \BB^N$ and $M_v : \HH^N \rightarrow \HH^N$ the truncation of the translation map from Section~\ref{subsubsec:TransBranched}.

Then $M_v(\Xbf)$ is the (unique in law) $\GG^N$-valued (left) L{\'e}vy process with generator given for all $f \in C^2_0(\GG^N)$ by
\begin{equation*}
\lim_{t \rightarrow 0} t^{-1}\EEE{f(x \star M_v(\Xbf_t)) - f(x)} = (\bullet_0 + v) f (x) + \frac{1}{2}\sum_{i,j=1}^m A^{i,j} (\tau_i\tau_j f)(x).
\end{equation*}
\end{corollary}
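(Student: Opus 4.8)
The plan is to deduce the corollary directly from Lemma~\ref{lem:LevyTriplet}. Since $M_v : \HH^N \mapsto \HH^N$ is a (continuous) algebra morphism mapping $\GG^N$ into itself — one of the elementary properties of $M_v$ recorded in Section~\ref{subsubsec:TransBranched} — Lemma~\ref{lem:LevyTriplet} applies with $M = M_v$ and immediately yields that $M_v(\Xbf)$ is the (unique in law) $\GG^N$-valued left L\'evy process whose generator sends $f$ to $\sum_{i=1}^m B^i (M_v\tau_i f)(x) + \tfrac12\sum_{i,j=1}^m A^{i,j}(M_v\tau_i\, M_v\tau_j f)(x)$. It then only remains to identify this with the stated expression. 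For the drift term, recall the standing assumption $B = \tau_1 = \bullet_0$, i.e.\ $B^1 = 1$ and $B^i = 0$ for $i \geq 2$; hence $\sum_i B^i (M_v\tau_i f) = M_v\bullet_0\, f = (\bullet_0 + v) f$ by the very definition of $M_v$.

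The second-order term requires one observation: $M_v$ fixes every tree all of whose vertices carry a label in $\{1,\ldots,d\}$. Indeed, by Theorem~\ref{thm:branchedAlgMorph} the restriction of $M_v$ to $\BB^*$ is a pre-Lie morphism, and since $v = v_0$ it fixes each single-vertex tree $\bullet_1,\ldots,\bullet_d$; by Theorem~\ref{thm:freePreLie} the pre-Lie subalgebra of $(\BB,\curvearrowright)$ generated by $\bullet_1,\ldots,\bullet_d$ is exactly the span of trees labelled only from $\{1,\ldots,d\}$, so a pre-Lie morphism fixing these generators fixes all such trees. Now a correlation matrix is symmetric and positive semi-definite, hence $A^{i,i} = 0$ forces $A^{i,j} = A^{j,i} = 0$ for every $j$; consequently only indices $i$ with $A^{i,i}\neq 0$ contribute to $\sum_{i,j}A^{i,j}(\cdots)$, and by the two standing assumptions on $A$ such a $\tau_i$ has no vertex labelled $0$. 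Therefore $M_v\tau_i = \tau_i$ (and likewise $\tau_j$) for every pair $(i,j)$ entering the sum, so $M_v\tau_i\, M_v\tau_j = \tau_i\tau_j$ as differential operators on $\GG^N$, and the diffusion term collapses to $\tfrac12\sum_{i,j}A^{i,j}(\tau_i\tau_j f)(x)$. Combined with the drift computation this is precisely the asserted generator, and uniqueness in law is already supplied by Lemma~\ref{lem:LevyTriplet}.

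The only step with genuine content is the identity $M_v\tau = \tau$ for label-$0$-free trees; this is exactly where the hypothesis $v = v_0$ (as opposed to a general $v = (v_0,\ldots,v_d)$, which would move the generators $\bullet_j$) and the pre-Lie nature of the extension $M_v$ are used, and I expect it to be the main point to get right. Everything else is bookkeeping with the assumptions on $A$ that were already imposed just before the corollary, and all analytic content — that $M_v(\Xbf)$ is a L\'evy process with the stated generator and that this determines its law — is carried by Lemma~\ref{lem:LevyTriplet}.
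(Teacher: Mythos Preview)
Your proof is correct and is exactly the argument the paper intends: the corollary is stated without proof, to be read as an immediate consequence of Lemma~\ref{lem:LevyTriplet} together with the observation (confirmed in the remark following the corollary) that $M_v\bullet_0 = \bullet_0 + v$ and $M_v\tau = \tau$ for trees without a $0$-label. Your justification of the latter via the free pre-Lie property and your use of positive semi-definiteness to kill off the cross-terms $A^{i,j}$ with $A^{i,i}=0$ are precisely the details the paper leaves implicit.
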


\begin{remark}
The statement of the corollary likewise holds for every algebra morphism $M : \HH^N \rightarrow \HH^N$ satisfying $M\bullet_0 = \bullet_0 + v$ and $M\tau = \tau$ for all forests $\tau \in \HH^N$ without a label $0$, which is a manifestation of the final point of the upcoming Theorem~\ref{thm:transRPs}~\ref{point:trans2}.
\end{remark}

\subsection{Higher-order translation and renormalization in finite-dimensions}  \label{sec:introex}

In~\cite{BCF18}, from which we give an excerpt in this subsection, two examples are studied of families of random bounded variation paths $(X^\varepsilon)_{\varepsilon > 0}$ whose canonical lifts to geometric rough paths $(\mathbf{X}^\varepsilon)_{\varepsilon > 0}$ diverge as $\varepsilon \rightarrow 0$. In particular, ODEs driven by $X^\varepsilon$ in general also fail to converge. However, for suitably chosen $v^\varepsilon = v^\varepsilon_0 \in \LL^N(\R^{d})$, for which in general $\lim_{\varepsilon\rightarrow 0}|v^\varepsilon| = \infty$, one obtains convergence of the translated rough paths $T_{v^\varepsilon}\mathbf{X}^\varepsilon$. In particular, it follows from the upcoming Theorem~\ref{thm:transRDEs} that solutions to modified ODEs driven by $X^\varepsilon$, with terms generally diverging as $\varepsilon \rightarrow 0$, converge to well-defined limits. In this specific context, the translation maps $T_{v^\varepsilon}$ are precisely the renormalization maps occurring in regularity structures when applied to the setting of SDEs; we shall make this connection precise in Section~\ref{sec:renorm}.

\bigskip \textbf{Physical Brownian motion in a (large) magnetic field.} It
was shown in \cite{FGL15} that the motion of a charged Brownian particle, in the
zero mass limit, in a magnetic field which is kept constant while taking the limit, naturally leads to a perturbed second
level, of the form $\mathbb{\bar{B}}_{s,t}=\mathbb{B}_{s,t}^{\Strat}+v\left(
t-s\right) $ for some $0\neq v\in \mathfrak{so}\left( d\right)$, $v$ being proportional to the strength of the magnetic field. We now want to look at the
evolution of the system under the blow-up of the magnetic field.

Consider a physical Brownian motion in a magnetic field with dynamics given by
\[
m \ddot{x} = -A\dot{x} + B\dot{x} + \xi, \; \; x(t) \in \R^d, 
\]
where $A$ is a symmetric matrix with strictly positive spectrum (representing friction), $B$ is an anti-symmetric matrix (representing the Lorentz force due to a magnetic field), and $\xi$ is an $\R^d$-valued white noise in time. We shall consider the situation that $A$ is constant whereas $B$ is a function of the mass $m$.

We rewrite these dynamics as
\begin{align*}
dX_t &= \frac{1}{m} P_t dt, \; \; X_0 = 0, \\
dP_t &= -\frac{1}{m} M P_t dt + dW_t, \; \; P_0 = 0,
\end{align*}
where $M = A - B$, and we have chosen the starting point as zero simply for convenience. We furthermore introduce the parameter $\varepsilon^2 = m$ and write $X^\varepsilon_t, P^\varepsilon_t$, and $M^\varepsilon = A-B^\varepsilon$ to denote the dependence on $\varepsilon$.

We are interested in the convergence of the processes $P^\varepsilon$ and $M^\varepsilon X^\varepsilon$ in rough path topologies. {As before in Section \ref{sec:TgRP},} let $G^2(\R^d)$ and {
$\LL^2(\R^d)$} denote the step-$2$ free nilpotent Lie group and Lie algebra respectively. Let us also write {$\LL^2(\R^d) = \R^d \oplus \LL^{(2)}(\R^d)$} for the decomposition of {$\LL^2(\R^d)$} into the first and second levels, where we identify {$\LL^{(2)}(\R^d)$} with the space of anti-symmetric $d\times d$ matrices. 

For every $\varepsilon > 0$, define the matrix
\[
C^\varepsilon = \int_0^\infty e^{-M^\varepsilon s}e^{-(M^{\varepsilon})^* s}ds,
\]
and the element
\[
v^\varepsilon = -\frac{1}{2}(M^\varepsilon C^\varepsilon - C^\varepsilon(M^\varepsilon)^*) \in {\LL^{(2)}(\R^d)}.
\]

For $\alpha\in(1/3,1/2]$, due to the extension theorem, any $\alpha$-H\"older weakly geometric rough path $\mathbf{Z}:[0,T]^2\to G(\R^d)$ is fully characterized by the truncation $\pi_2\mathbf{Z}:[0,T]^2\to G^2(\R^d)$. Thus, for the purpose of this example, we represent any such rough path $\mathbf{Z}$ by the increments $Z_{s,t}$ of the underlying path and the second level $\Z_{s,t}$, i.e.
\begin{equation*}
 Z_{s,t}^i=\langle \mathbf{Z}_{s,t},e_i\rangle,\qquad\Z_{s,t}^{j,k}=\langle\mathbf{Z}_{s,t},e_{j,k}\rangle.
\end{equation*}

In this special case and for any $v=v_0\in\LL^{(2)}(\R^d)$, the translation map introduced in Definition\nobreakspace\ref{def:TensorTranslate} is given by

\begin{equation}\label{eq:TvDef}
T_v(Z_{s,t}, \Z_{s,t}) = (Z_{s,t}, \Z_{s,t} + (t-s)v).
\end{equation}

Consider the $G^2(\R^d)$-valued processes
\begin{align*}
(P^\varepsilon_{s,t}, \Pbb^\varepsilon_{s,t}) &= \left(P^\varepsilon_{s,t}, \int_s^t P^\varepsilon_{s,r} \tensor \circ dP^\varepsilon_r\right), \\
(Z^\varepsilon_{s,t}, \Z^\varepsilon_{s,t}) &= \left(M^\varepsilon X^\varepsilon_{s,t}, \int_s^t M^\varepsilon X_{s,r} \tensor d(M^\varepsilon X^\varepsilon)_r \right),
\end{align*}
and the canonical lift of the Brownian motion $W$
\[
(W_{s,t}, \W_{s,t}) = \left(W_{s,t}, \int_s^t W_{s,r} \tensor \circ dW_r\right),
\]
where the integrals in the definition of $\Pbb^\varepsilon_{s,t}$ and $\W_{s,t}$ are in the Stratonovich sense.

Contrary to~\cite{FGL15}, we allow blow-up of the magnetic field with rate $B^\varepsilon \lesssim \varepsilon^{-\kappa}$, $\kappa \in [0,1]$, as a method to model magnetic fields which are large (in a quantified way) in comparison to the (small) mass.
The paths $Z^\varepsilon$ then form approximations of Brownian motion, whose canonical rough path lifts $(Z^\varepsilon, \Z^\varepsilon)$ do not converge in rough path space (due to
divergence of the L\'{e}vy's area).
The following result establishes convergence of the ``renormalised'' paths $T_{v^\varepsilon}(P^\varepsilon_{s,t}, \Pbb^\varepsilon_{s,t})$ and $T_{v^\varepsilon}(Z^\varepsilon_{s,t}, \Z^\varepsilon_{s,t})$.

\begin{theorem}[\cite{BCF18} Theorem~1]\label{thm:magneticConv}
Suppose that
\begin{equation}\label{eq:MBound}
\lim_{\varepsilon \rightarrow 0} |M^\varepsilon|\varepsilon^\kappa = 0 \; \textnormal{ for some } \kappa \in [0,1].
\end{equation}
Then for any $\alpha \in [0,1/2-\kappa/4)$ and $q < \infty$, it holds that $T_{v^\varepsilon}(P^\varepsilon, \Pbb^\varepsilon) \rightarrow (0,0)$ and $T_{v^\varepsilon}(Z^\varepsilon, \Z^\varepsilon) \rightarrow (W,\W)$ in $L^q$ and $\alpha$-H{\"o}lder topology as $\varepsilon \rightarrow 0$. More precisely, as $\varepsilon \rightarrow 0$, in $L^q$
\[
\sup_{s,t \in [0,T]} \frac{|P^\varepsilon_{s,t}|}{|t-s|^\alpha} + \sup_{s,t \in [0,T]} \frac{|\Pbb^\varepsilon_{s,t} + (t-s)v^\varepsilon|}{|t-s|^{2\alpha}} \rightarrow 0.
\]
and
\[
\sup_{s,t \in [0,T]} \frac{|Z^\varepsilon_{s,t} - W_{s,t}|}{|t-s|^\alpha} + \sup_{s,t \in [0,T]} \frac{|\Z^\varepsilon_{s,t} + (t-s)v^\varepsilon - \W_{s,t}|}{|t-s|^{2\alpha}} \rightarrow 0.
\]
In particular, if $\kappa \in [0,\frac23)$, one can take $\alpha \in (\frac13,\frac12-\frac\kappa4)$ and convergence takes place in $\alpha$-H{\"o}lder rough path topology.
\end{theorem}

Lastly, we would like to point out that higher order renormalization can be expected in the presence of highly oscillatory fields, which also points to some natural connections with homogenization theory.

\medskip \textbf{Fractional delay / Hoff process }
Viewed as two-dimensional
rough paths, Brownian motion and its $\varepsilon $-delay, $t\mapsto
(B_{t},B_{t-\varepsilon })$, does not converge to $\left( B,B\right) $, with
- as one may expect - zero area. Instead, the quadratic variation of
Brownian motion leads to a rough path limit of the form $\left( B,B;A\right) 
$ with area of order one. It is then possible
to check that, replacing $B$ by a fractional Brownian motion with Hurst parameter $H<1/2$, the same
construction will yield exploding L\'{e}vy area as $\varepsilon \downarrow 0$.

The same phenomena is seen in lead-lag situations, popular in time series
analysis. As in the case of physical Brownian motion in a (large) magnetic
field, these divergences can be cured by applying suitable (second-level)
translation / renormalization operators, as we shall now see; for details on the (non-divergent) Brownian / semi-martingale case, see e.g.~\cite[Ch.13]{FrizVictoir10} and~\cite{Flint16}.

Consider a path $X : [0,1] \mapsto \R^d$. Let $n \geq 1$ be an integer and write for brevity $X^n_i = X_{i/n}$. Consider the piecewise linear path $\tilde X^n : [0,1] \mapsto \R^{2d}$ defined by
\begin{align*}
\tilde X^n_{2i/2n} &= (X^n_i, X^n_i), \\
\tilde X^n_{(2i+1)/2n} &= (X^n_i, X^n_{i+1}),
\end{align*}
and linear on the intervals $\left[\frac{2i}{2n}, \frac{2i+1}{2n}\right]$ and $\left[\frac{2i+1}{2n}, \frac{2i+2}{2n}\right]$ for all $i = 0,\ldots, n-1$. Note that this is a variant of the Hoff process considered in~\cite{Flint16}.

Denote by {$\tilde \Xbf^n_{s,t} = \pi_2\exp_\tensor(\tilde X^n_{s,t} + \Abb^n_{s,t})$} the level-$2$ lift of $\tilde X^n$, where $\Abb^n_{s,t}$ is the $(2d) \times (2d)$ anti-symmetric L{\'e}vy area matrix given by
\[
\Abb^n_{s,t}
= \frac{1}{2}\left(\int_s^t \tilde X^n_{s,r} \tensor d\tilde X^n_r - \int_s^t \tilde X^n_{s,r} \tensor d\tilde X^n_r\right).
\]

Let $H \in (0,1)$ and consider a fractional Brownian motion $B^H$ with covariance $R(s,t) = \frac{1}{2}(t^{2H} + s^{2H} - |t-s|^{2H})$. Let $X : [0,1] \mapsto \R^d$ be $d$ independent copies of $B^H$.

Recall the definition of $T_v$ from~\eqref{eq:TvDef}. We are interested in the convergence in rough path topologies of $T_{\tilde v^n}(\tilde \Xbf^n)$ where {$\tilde v^n \in \LL^{(2)}(\R^{2d})$} is appropriately chosen. Define the (diagonal) $d\times d$ matrix
\[
v^n = \frac{1}{2}\EEE{\sum_{k=0}^{n-1} (X^n_{k+1}-X^n_k) \otimes (X^n_{k+1}-X^n_k)} = \frac{n^{1-2H}}{2} I,
\]
and the anti-symmetric $(2d) \times (2d)$ matrix
\[
\tilde v^n = \left( \begin{array}{cc} 0 & -v^n  \\ v^n  & 0 \end{array} \right) \in \LL^{(2)}(\R^{2d}).
\]
Finally, consider the path $\tilde X = (X,X) : [0,1] \mapsto \R^{2d}$, its canonically defined L{\'e}vy area $\Abb$ (which exists for $1/4 < H \leq 1$), and its level-$2$ lift {$\tilde \Xbf = \pi_2\exp_\tensor(\tilde X + \Abb)$}. The following result establishes convergence of the ``renormalised path'' $T_{\tilde v^n}(\tilde \Xbf^n)$.

\begin{theorem}[\cite{BCF18} Theorem~5]\label{thm:HoffConv}
Suppose $1/4 < H \leq 1/2$. Then for all $\alpha \in [0, H)$ and $q < \infty$, it holds that $T_{\tilde v^n}(\tilde \Xbf^n) \rightarrow \tilde \Xbf$ in $L^q$ and $\alpha$-H{\"o}lder topology. More precisely, as $n \rightarrow \infty$, in $L^q$
\[
\sup_{s,t \in [0,T]} \frac{|\tilde X^n_{s,t} - \tilde X_{s,t}|}{|t-s|^\alpha} + \sup_{s,t \in [0,T]} \frac{|\Abb^n_{s,t} + (t-s)\tilde v^n - \Abb_{s,t}|}{|t-s|^{2\alpha}} \rightarrow 0.
\]
\end{theorem}

\medskip
\textbf{Rough stochastic volatility and robust It{\^o} integration.}
Applications from quantitative finance recently led to the pathwise study of the
(1-dimensional)\ It{\^o}-integral,
\[
\int_{0}^{T}f (  \hat{B}_{t} ) dB_{t}\text{ with }\hat{B}
_{t}=\int_{0}^{t}\left\vert t-s\right\vert ^{H-1/2}dB_{s}
\]
where $f:\mathbb{R}\rightarrow \mathbb{R}$ is of the form $x \mapsto \exp \left( \eta
x \right) $. When $H\in \left( 0,1/2\right) $, the case relevant in applications, this
stochastic integration is singular in the sense that the mollifier approximations
actually diverge (infinite It{\^o}-Stratonovich correction, due to infinite
quadratic variation of $\hat{B}$ when $H<1/2$).  The integrand $f( \hat{B}_t )$, which plays the role of a stochastic volatility process ($\eta >0$ is a volatility-of-volatility parameter)
is {\it not} a controlled rough path, nor has the pair $(\hat B,B)$ a satisfactory rough path lift
(the It\^o integral $\int \hat B dB$ is well-defined, but $\int B d \hat B$ is not). The correct ``It\^o rough
path'' in this context is then an $\R^{n+1}$-valued ``partial'' branched rough path of the form 
\[
\left( B,\hat{B},\int \hat{B}d{B},...\int \hat{B}^{n}dB\right) 
\]
where $n \sim 1/H$. Again, mollifier approximations will diverge but
it is possible to see that one can carry out a renormalization which restores
convergence to the It\^o limit. (We note the similarity with SPDE situations like KPZ.)
See~\cite{BFGMS17} for details.

\section{Rough differential equations} \label{sec:RDE}

\subsection{Translated rough paths are rough paths} \label{sec:TRPrRP}

We now show that the maps $T_v$ and $M_v$ act on the spaces of weakly geometric and branched rough paths. Throughout, we regard these rough paths as fully lifted, as can always (and uniquely) be done thanks to the extension theorem. The action of our translation operator is then pointwise, i.e.
$$
                                  (M_v\mathbf{X})_{s,t} := M_v (\mathbf{X}_{s,t}),
$$
and similarly for the geometric rough path translation operator $T$.  In the following, we let $|w|$ denotes the length of a word $w \in T(\R^{1+d})$ (resp. number of nodes in a forest $w \in \BPrimal$), and equip the space of $\alpha$-H{\"o}lder weakly geometric (resp. branched) rough paths with the inhomogeneous H{\"o}lder norm
\[
\norm{\mathbf{X}}_{\Hol{\alpha};[s,t]} =  \max_{|w| \leq \floor{1/\alpha}} \sup_{u \neq v \in [s,t]}\frac{|\scal{\mathbf{X}_{u,v},w}|}{|v-u|^{|w|\alpha}},
\]
where the max runs over all words $w \in T(\R^{1+d})$ (resp. forests $w \in \BPrimal$) with $|w| \leq \floor{1/\alpha}$.

\begin{theorem}\label{thm:transRPs} 

Let $\alpha \in (0,1]$ and $\mathbf{X}$ a $\alpha$-H{\"older} weakly geometric (resp. branched) rough path over $\mathbb{R}^{1+d}$.

\begin{enumerate}[label=\upshape(\roman*\upshape)]
\item \label{point:trans1} Let $v = \left(v_{0},v_1\ldots, v_d \right)$ be a collection of elements in $\LL^N(\R^{1+d})$ (resp. in $\BB^N$).

Then $T_v\mathbf{X}$ (resp. $M_v \mathbf{X}$) is a $\alpha/N$-H{\"older} weakly geometric (resp. branched) rough path satisfying
\begin{equation}\label{eq:HolLipNorm}
\norm{T_v\mathbf{X}}_{\Hol{\alpha/N};[s,t]} \textnormal{ (resp. $\norm{M_v\mathbf{X}}_{\Hol{\alpha/N};[s,t]}$) } \leq C_v \norm{\mathbf{X}}_{\Hol{\alpha};[s,t]}
\end{equation}
for a constant $C_v$ depending polynomially on $v$.

\item \label{point:trans2} Let $v= \left( v_{0},0,\dots 0\right)$ for $v_{0}\in \LL^{N}\left( \mathbb{R}^{1+d}\right) $ (resp. $v_{0}\in \BB^N$). Suppose that $\mathbf{X}$ satisfies
\begin{equation}\label{eq:mixedVar}
\norm{\mathbf{X}}_{\Hol{(1,\alpha)};[s,t]} := \max_{|w| \leq \floor{1/\alpha}} \sup_{u \neq v \in [s,t]} \frac{|\scal{\mathbf{X}_{u,v}, w}|}{|v-u|^{(1-\alpha)|w|_0 + \alpha|w|}} < \infty,
\end{equation}
where the $\max$ runs over all words $w \in T(\R^{1+d})$ (resp. forests $w \in \BPrimal$) with $|w| \leq \floor{1/\alpha}$ and $|w|_0$ denotes the number of times the letter $e_0$ (resp. label $0$) appears in $w$.

Then $T_{v}\mathbf{X}$ (resp. $M_v\mathbf{X}$) is a $\alpha \wedge (1/N)$-H{\"older} weakly geometric (resp. branched) rough path over $\mathbb{R}^{1+d}$ satisfying
\[
\norm{T_v\mathbf{X}}_{\Hol{\alpha \wedge (1/N)};[s,t]} \textnormal{ (resp. $\norm{M_v\mathbf{X}}_{\Hol{\alpha \wedge (1/N)};[s,t]}$) } \leq C_v \norm{\mathbf{X}}_{\Hol{(1,\alpha)};[s,t]}
\]
for a constant $C_v$ depending polynomially on $v$.

Finally, in the setting of branched rough paths, let $M : \HH^* \rightarrow \HH^*$ be any algebra morphism which preserves $\GG$ and such that $M \tau = \tau$ for every forest $\tau\in \HH$ without a label $0$, and $M\bullet_0 = M_v\bullet_0 = \bullet_0 + v_0$. Then $M \mathbf{X} = M_v \mathbf{X}$.
\end{enumerate} 
\end{theorem}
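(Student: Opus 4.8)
The plan is to prove the pointwise identity $M\mathbf X_{s,t}=M_v\mathbf X_{s,t}$ directly; since $M_v\mathbf X$ is already a branched rough path by part~\ref{point:trans1}, nothing more is needed. As $M\mathbf X_{s,t}$ and $M_v\mathbf X_{s,t}$ are elements of $\HH^*$, this amounts to showing $\gen{\mathbf X_{s,t},M^*\tau}=\gen{\mathbf X_{s,t},M_v^*\tau}$ for every forest $\tau\in\HH$. First I would record that the hypotheses make $M$ a Hopf algebra morphism of $(\HH^*,\star,\Delta_\poly)$: an algebra morphism preserving $\GG$ is a bialgebra morphism because the linear span of $\GG^N$ exhausts $\HH^N$ (Milnor--Moore and PBW; compare the proof of Lemma~\ref{lem:HopfMorphBranched}), and it commutes with the antipode by \cite{Preiss16}~Theorem~2.14. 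Consequently $M^*:\HH\to\HH$ is a Hopf algebra morphism of the Connes--Kreimer algebra, just as $M_v^*$ is (Proposition~\ref{prop:adjointMv}), and $M-M_v$ annihilates every generator $\bullet_i$ and every label-$0$-free forest.

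The analytic input is a rigidity lemma based on~\eqref{eq:mixedVar}: if $\mathbf Y$ is a branched rough path with $\norm{\mathbf Y}_{\Hol{(1,\alpha)};[s,t]}<\infty$, then for every forest $w$ with $|w|_0\geq 1$ and $|w|\geq 2$ one has $\gen{\mathbf Y_{s,t},w}=\gen{\mathbf Y_{s,t},\widetilde P_w}$ for a fixed element $\widetilde P_w\in\HH$ built only from forests with strictly fewer vertices. I would prove this by induction on $|w|$: Chen's relation expresses the defect $\gen{\mathbf Y_{s,t},w}-\gen{\mathbf Y_{s,u},w}-\gen{\mathbf Y_{u,t},w}$ through non-trivial admissible cuts of $w$, hence through products of pairings against strictly smaller forests, which by the inductive hypothesis (and using $\gen{\mathbf Y_{r,r'},u}\gen{\mathbf Y_{r,r'},v}=\gen{\mathbf Y_{r,r'},u\poly v}$, valid since $\mathbf Y_{r,r'}$ is group-like) are themselves pairings of $\mathbf Y$ against lower-order elements; a sewing-lemma argument (equivalently, uniqueness in the extension theorem) then identifies $\gen{\mathbf Y_{s,t},w}$ with the unique such germ of regularity exponent $(1-\alpha)|w|_0+\alpha|w|\geq 1+\alpha>1$, which forces the claimed formula. (The simplest instance, $w=[\bullet_0]_{\bullet_0}$, reads $\gen{\mathbf Y_{s,t},[\bullet_0]_{\bullet_0}}=\tfrac12\gen{\mathbf Y_{s,t},\bullet_0\poly\bullet_0}$.) Let $\NN\subset\HH$ be the linear span of the elements $w-\widetilde P_w$; then $\gen{\mathbf Y_{s,t},\cdot}$ annihilates $\NN$ for every such $\mathbf Y$, in particular for $\mathbf X$. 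It therefore suffices to prove the purely algebraic statement that $(M^*-M_v^*)\tau\in\NN$ for every forest $\tau$. Since $M-M_v$ vanishes on the generators and on the label-$0$-free forests, the discrepancy $(M^*-M_v^*)\tau$ is concentrated on forests genuinely involving the time label --- precisely the setting governed by the rigidity lemma --- and I would verify the containment by a second induction, on the number of $0$-labels of $\tau$, matching the explicit subtree-contraction form of $M_v^*\tau$ from Proposition~\ref{prop:adjointMv} against the constraints that $M$, being a $\GG$-preserving Hopf morphism that fixes the label-$0$-free part and satisfies $M\bullet_0=\bullet_0+v_0$, imposes on $M^*\tau$.

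The main obstacle is this last algebraic step. The Example preceding Theorem~\ref{thm:branchedAlgMorph} shows that $M$ need not equal $M_v$ as a map on $\HH^*$, so $M^*\tau=M_v^*\tau$ is false in general; what must be shown is only the weaker containment $(M^*-M_v^*)\tau\in\NN$ --- that the ambiguity in extending $\bullet_0\mapsto\bullet_0+v_0$ to a $\GG$-preserving algebra morphism is absorbed exactly by the supercritical forests that \eqref{eq:mixedVar} renders invisible to $\mathbf X$. Organising this bookkeeping --- tracking how $M$ and $M_v$ act on the Lie brackets and pre-Lie products building up $\BB^*$ and matching it with the combinatorics of $M_v^*$ --- is where the genuine work lies; Step~1 and the a priori estimate underlying the rigidity lemma are routine.
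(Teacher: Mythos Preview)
Your proposal addresses only the final claim of part~\ref{point:trans2}; parts~\ref{point:trans1} and the first half of~\ref{point:trans2} are handled in the paper by a direct estimate using the dual descriptions (Propositions~\ref{prop:adjointTv} and~\ref{prop:adjointMv}), which you do not discuss but which are routine. For the final claim, your route is genuinely different from the paper's and, as you yourself flag, incomplete at the decisive step.

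The paper argues on the \emph{image} side rather than the kernel side. It shows that for every $k$, the truncation $\pi_k\mathbf X_{s,t}$ actually takes values in the $\star$-subalgebra of $\HH^k$ generated by $\bullet_0$ together with all label-$0$-free forests. Once this is known the conclusion is immediate: $M$ and $M_v$ are both $\star$-algebra morphisms and agree on that generating set by hypothesis, hence agree on the whole subalgebra, hence on $\mathbf X_{s,t}$. The inclusion in the subalgebra is proved by an approximation argument: one builds piecewise-geodesic paths $\x^D$ in $\GG^k$ whose increments over partition intervals are $\exp\bigl(\pi_{\BB^k(\R^d)\oplus\gen{\bullet_0}}\log\mathbf X_{t_i,t_{i+1}}\bigr)$, so that $\x^D$ lives in the desired subalgebra by construction, and the mixed-H\"older bound~\eqref{eq:mixedVar} is exactly what forces $\x^D\to\pi_k\mathbf X$ as the mesh goes to zero.

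Your Step~3 --- the containment $(M^*-M_v^*)\tau\in\NN$ --- is precisely the dual of this image statement, and the difficulty you identify is real: without an independent characterization of $\NN$ it is not clear how to carry out your induction on the number of $0$-labels, because the constraints imposed on $M$ are stated on the primal side (as a $\star$-morphism fixing certain elements) and do not translate readily into constraints on $M^*\tau$ forest-by-forest. There is also a technical wrinkle in your rigidity lemma: the claim that $\gen{\mathbf Y_{s,t},w}=\gen{\mathbf Y_{s,t},\widetilde P_w}$ for a fixed $\widetilde P_w$ built from forests with \emph{strictly fewer vertices} is not obviously correct --- for instance $[\bullet_0]_{\bullet_1}$ and $[\bullet_1]_{\bullet_0}$ have the same size and each would want to appear in the other's $\widetilde P$. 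What one can show is that such $w$ are determined by forests with strictly fewer $0$-labels, which is exactly the image statement the paper proves. In short, the paper's reformulation --- characterize where $\mathbf X$ lands rather than what it annihilates --- is the missing idea that dissolves your Step~3 entirely.
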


Before the proof of the theorem, several remarks are in order.

\begin{remark}
In Theorem~\ref{thm:transRPs} we treat $\alpha$-H{\"older} weakly geometric rough paths as already enhanced with their iterated integrals. Thus $\mathbf{X}_{s,t}$ is an element of $T((\R^{1+d}))$ and $(T_v\mathbf{X})_{s,t}$ is just the image of $\mathbf{X}_{s,t}$ under $T_v$. Therefore the statement of the proposition is that not only does $(T_v\mathbf{X})_{s,t}$ have the correct regularity on the first $n=\floor{1/\alpha}$ levels to qualify as a rough path but that all further iterated integrals are already given, in a purely algebraic way, by $(T_v\mathbf{X})$. That said, if one takes the level-$n$ view, writing $\pi_{n} ( T_v\mathbf{X})$ for the translation only defined as a level-$n$ rough path, the extension theorem asserts that there is a unique full rough path lift, say $\mathbf{Z}$. But then, by the uniqueness part of the extension theorem, $\mathbf{Z} = T_v\mathbf{X}$ so that our construction is compatible with the rough path extension.

The same remark applies to branched rough paths, where we recall that, as a particular consequence of the sewing lemma, every $\alpha$-H{\"older} branched rough paths admits a unique lift (extension) to all of $\BDual$ (\cite{Gubinelli10} Theorem~7.3, or~\cite{HairerKelly15} p.223). We would also like to point out that Boedihardjo~\cite{Boedihardjo15} recently extended a result on the factorial decay of lifts of geometric rough paths (first shown in~\cite{Lyons98}) to the branched setting, answering a conjecture in~\cite{Gubinelli10}.
\end{remark}

\begin{remark}
In the case of geometric rough paths the previous remark points to an alternative (analytic) construction of the translation operator, first defined on a smooth path $X$ identified with its full lift $X \equiv (1,X^1,X^2, ...)$,  and subsequently extended to geometric rough paths by continuity. We stick to the case of one Lie polynomial $v_0 = v = (v^1,v^2,...v^N)$ which we want to add at constant speed to $X$. At level $1$, obviously $(T_v X)^1_{s,t} = X^1_{s,t} + (t-s) v^1$ and $(T_v X)$ is a Lipschitz path (a $1$-rough path). We then perturb the canonically obtained (extended) $2$-rough path which in turn we can perturb on the second level by adding $(t-s) v^2$, thereby obtaining a (non-canonical) $2$-rough path. Iterating this construction allows us to ``feed in, level-by-level'' the perturbation $v$ until we arrive at a rough path $T_v\mathbf{X}$ with regularity $\Hol \alpha \wedge (1/N)$. We leave it to the reader to check that this construction yields indeed $T_v\mathbf{X}$. The severe downside of this construction is its restricted to geometric rough paths, not to mention its repeated use of the (analytic) extension theorem, in a situation that is within reach of purely algebraic methods. 
\end{remark}

\begin{remark}\label{remark:colifting}
The condition on $\mathbf{X}$ in equation~\eqref{eq:mixedVar} is very natural and arises by ``colifting'' a Lipschitz path $X^0$ with a $d$-dimensional $\alpha$-H{\"o}lder weakly geometric rough path. Moreover, this is a special case of a weakly geometric $(p,q)$-rough path (see~\cite{FrizVictoir10} Section~9.4), and the statement can readily be extended to this general setting.
One can also make a statement about the continuity of the maps $(v,\mathbf{X}) \mapsto T_v\mathbf{X}$ and $(v,\mathbf{X}) \mapsto M_v\mathbf{X}$ in suitable rough path topologies. However these points will not be explored here further.
\end{remark}

\begin{remark}\label{remark:algMorphs}
The proof of Theorem~\ref{thm:transRPs} part~\ref{point:trans1} will reveal that the only properties required of $T_v$ (resp. $M_v$) is that it be an algebra morphism, preserves group-like (or equivalently primitive) elements, is upper-triangular (increases grading), and that it increases the grade of every word of length $k$ (resp. forest with $k$ nodes) to at most $Nk$. While already the first of these conditions uniquely determines $T_v$ once $T_v (e_i) = e_i + v_i$ is chosen, we emphasise that without demanding that $M_v$ is a pre-Lie algebra morphism, there is freedom to how $M_v$ can be extended to satisfy these properties even after $M_v(\bullet_i) = \bullet_i + v_i$ is chosen.

In general, different choices of $M_v$ will give rise to different branched rough paths $M_v(\mathbf{X})$. There is a notable exception to this, which is when $\mathbf{X}$ is the canonical lift of a Lipschitz (or more generally $\alpha$-H{\"o}lder, $\alpha \in (1/2,1]$) path in $\R^{1+d}$. Then for every algebra morphism $M : \HH^* \rightarrow \HH^*$ such that $M \bullet_i = M_v \bullet_i = \bullet_i + v_i$, it holds that $M \mathbf{X} = M_v \mathbf{X}$. Indeed, in this case $\mathbf{X}$ is necessarily in the image of $G(\R^{1+d}) \subset T((\R^{1+d}))$ under the embedding~\eqref{eq:tensorForestEmbed}, and since $M$ and $M_v$ agree on the generators $\bullet_i$, it follows that $M\mathbf{X} = M_v \mathbf{X}$ (this discussion relates of course to the final point of Theorem~\ref{thm:transRPs} part~\ref{point:trans2}, where upon demanding additional structure on $\mathbf{X}$, we see that all maps $M$ satisfying the specified properties agree on $\mathbf{X}$).
\end{remark}

\begin{remark} \label{rem:BSig}
Observe that the level-$N$ lift of a weakly geometric rough path is precisely the solution to the linear RDE
\[
dY_t = L(Y_t)d\mathbf{X}_t, \; \; Y_0 = 1 \in T^N(\R^{1+d}),
\]
where $L = (L_0,\ldots,L_d)$ are the linear vector fields on $T^N(\R^{1+d})$ given by right-multiplication by $(e_0,\ldots, e_d)$ respectively.
In much the same way, the level-$N$ truncation of the translated path $Y_t := \pi_N(T_v\mathbf{X}_t)$ is the solution to the modified linear RDE 
\[
dY_t = L^v(Y_t) d\mathbf{X}_t, \; \; Y_0 = 1 \in T^N(\R^{1+d}),
\]
where now $L^v = (L_{e_0 + v_0},\ldots,L_{e_d + v_d})$ are given by right-multiplication by $(e_0 + v_0,\ldots, e_d + v_d)$ (which is a special case of the upcoming Theorem~\ref{thm:transRDEs}).

We note however that the same conclusion does not hold for branched rough paths. Indeed, even the level-$N$ lift of a branched rough path $\mathbf{X}$, $N \geq \floor{1/\alpha}$, is in general not the solution of a linear RDE driven by $\mathbf{X}$, which can easily be seen from the fact that linear RDEs are completely determined by the values $\scal{\mathbf{X}_{s,t}, \tau}$ where $\tau$ ranges over all linear trees $\tau = [\ldots[\bullet_{i_1}]_{\bullet_{i_2}}\ldots]_{\bullet_{i_m}}$ (see, e.g., \cite{HairerKelly15} Example~3.11). A simple example is any branched rough path $\mathbf{X}$ for which $\scal{\mathbf{X}, \tau} \equiv 0$ for all linear trees $\tau$ (e.g., the $\frac{1}{3}$-H{\"o}lder branched rough path for which $\scal{\mathbf{X}_{s,t}, \tau} = t-s$ for some $\tau = [\bullet_i\bullet_j]_{\bullet_k}$ and zero for every other tree $\tau$ of size $|\tau| \leq 3$), so that every linear RDE driven by $\mathbf{X}$ is constant.
\end{remark}

\begin{proof}[Proof of Theorem~\ref{thm:transRPs}]
\ref{point:trans1} We are required to show that
\begin{enumerate}
\item $T_v \mathbf{X}$ takes values in $G(\R^{1+d})$,

\item Chen's relation $(T_v \mathbf{X})_{s,t} \tensor (T_v \mathbf{X})_{t,u} = (T_v \mathbf{X})_{s,u}$ holds, and

\item the analytic condition~\eqref{eq:HolLipNorm}.
\end{enumerate}
The first two properties follow immediately from the analogous properties of $\mathbf{X}$ and the fact that $\restr{T_v}{G(\R^{1+d})} : G(\R^{1+d}) \rightarrow G(\R^{1+d})$ is group morphism. To verify the final property, fix a word $w \in T(\R^{1+d})$. It readily follows from Proposition~\ref{prop:adjointTv} and Remark~\ref{remark:generalGeom} that $T_v^*w = \sum_i \lambda_i w_i$ where $\lambda_i \in \R$ and $w_i$ is a word which satisfies $N|w_i| \geq |w|$. However
\[
|\scal{\mathbf{X}_{s,t}, w_i}| \leq \norm{\mathbf{X}}_{\Hol{\alpha};[s,t]}|t-s|^{\alpha|w_i|},
\]
and thus
\[
|\scal{(T_v\mathbf{X})_{s,t}, w}| = |\scal{\mathbf{X}_{s,t}, T_v^*w}| \leq C\norm{\mathbf{X}}_{\Hol{\alpha};[s,t]} |t-s|^{\alpha|w|/N}
\]
with $C$ depending only on $w$ and (polynomially) on $v$. It follows that $T_v\mathbf{X}$ is indeed a $\alpha/N$-H{\"o}lder rough path, and the desired estimate~\eqref{eq:HolLipNorm} follows by running over all $w$ with $|w|\leq \floor{N/\alpha}$. The proof for the case of branched rough paths is identical, using now Proposition~\ref{prop:adjointMv}.

The proof of the first statement of~\ref{point:trans2} is virtually the same, except we now observe that Proposition~\ref{prop:adjointTv} and the condition $v = v_0 \in \LL^N(\R^{1+d})$ imply that $T_v^*w = \sum_{i} \lambda_i w_i$ where $\lambda_i \in \R$ and $w_i$ is a word which satisfies
\[
N|w_i|_0 + (|w_i| - |w_i|_0) \geq |w|.
\]
The first statement of~\ref{point:trans2} now follows from~\eqref{eq:mixedVar}, and the proof for the case of branched rough paths is again identical.

To show the last point of~\ref{point:trans2}, consider the subspace $\HH^k(\R^d) \oplus \scal{\bullet_0}\subset \HH^k$ spanned by $\bullet_0$ and all forests $\tau \in \HH^k$ without a label $0$. Observe that it suffices to show that for every $k \geq 0$, the level-$k$ truncation $\pi_k \mathbf{X}$ takes values in the subalgebra of $\HH^k$ generated by $\HH^k(\R^d) \oplus \scal{\bullet_0}$.

To this end, consider the space $\tilde C^{\infty}$ defined as the collection of all piecewise smooth paths $\x : [0,T] \rightarrow \GG^k$ for which $\dot{\x} \in \HH^k(\R^d) \oplus \scal{\bullet_0}$ (so that in fact $\dot{\x} \in \BB^k(\R^d) \oplus \scal{\bullet_0}$). For every partition $D = (t_0,\dots, t_m) \subset [0,T]$, we can construct $\x^D \in \tilde C^\infty$ as the piecewise geodesic path (for the Riemannian structure of $\GG^k$) whose increment over $[t_i,t_{i+1}]$ is $\exp( \pi_{\BB^k(\R^d) \oplus \scal{\bullet_0}} \log\mathbf{X}_{t_i,t_{i+1}})$. One can verify that condition~\eqref{eq:mixedVar} guarantees that $\x^D \rightarrow \pi_k \mathbf{X}$ uniformly as $|D| \rightarrow 0$. The conclusion now follows since, by construction, $\x^D$ takes values in the subalgebra generated by $\BB^k(\R^d) \oplus \scal{\bullet_0}$.
\end{proof}

\subsection{Effects of translations on RDEs}\label{subsec:effectsRDEs}

Throughout this section, we assume that $f=\left( f_0, \dots , f_{d}\right)$ is a collection of vector fields on $\R^e$ which are as regular as required for all stated operations and RDEs to make sense.

Observe that $f$ induces a canonical map from $\LL^N(\R^{1+d})$ to the space of vector fields $\Vect(\R^e)$ which extends the map $e_{i}\mapsto f_{i}$. Write $f_{v}$ for the image of $v\in \LL^{N}\left( \mathbb{R}
^{d}\right)$ under this map, e.g., for $v = \left[ e_{1},e_{2}\right] $, we have the vector
field $f_{\left[ e_{1},e_{2}\right] } \equiv \left[ f_{1},f_{2}\right] $. Given a collection $v = (v_0,\ldots, v_d) \subset \LL^N(\R^{1+d})$, we write
\[
f^v = (f^v_0,\ldots, f^v_d) = (f_{e_0 + v_0},\ldots, f_{e_d+v_d}).
\]

Similarly, $f$ induces a canonical map from $\BB^N$ to $\Vect(\R^e)$ which extends $\bullet _{i}\mapsto f_{i}$ using the pre-Lie product $\triangleright$ on $\Vect(\R^e)$ (recall from Example~\ref{example:vectfieldsPreLie} that in coordinates $\left( f^{i}\partial_{i}\right) \triangleright \left( g^{j}\partial _{j}\right) \equiv \left(f^{i}\partial _{i}g^{j}\right) \partial _{j}$). Once more write $f_{v}$ for
the image of $v\in \BB^N$ under this map, e.g., for $v = \left[ \bullet_{1} \right]_{\bullet_2} = \bullet_1 \curvearrowright \bullet_2$, we have the vector field
\[
f_{\bullet _{1}\curvearrowright \bullet _{2}} = f_{\left[ \bullet _{1} \right] _{\bullet_2}} \equiv f_{1}\triangleright f_{2}
\]
Again given a collection $v = (v_0,\ldots, v_d) \subset \BB^N$, we write
\[
f^v = (f^v_0,\ldots, f^v_d) = (f_{\bullet_0 + v_0},\ldots, f_{\bullet_d + v_d}).
\]

\begin{remark}
The map $v \mapsto f_{v}$ is closely related to the notion of elementary differentials in $B$-series~\cite{Calaque11} and has already been used to study solutions of branched RDEs in the works of Cass--Weidner~\cite{CW16} and Hairer--Kelly~\cite{HairerKelly15} (note also that our notation $f_v$ agrees with that of~\cite[Section~3]{HairerKelly15}).
\end{remark}

\begin{remark}
Treating $\LL^N(\R^{1+d})$ (resp. $\BB^N$) as a nilpotent Lie (resp. pre-Lie) algebra, the map considered above is not in general a Lie (resp. pre-Lie) algebra morphism into $\Vect(\R^e)$.
\end{remark}

\begin{theorem}\label{thm:transRDEs}
\begin{enumerate}[label=\upshape(\roman*\upshape)]
\item \label{point:RDE1} Let notation be as in Theorem~\ref{thm:transRPs} part~\ref{point:trans1}. Then $Y$ is an RDE solution flow to     
\[
dY=f\left( Y\right) d\left( T_{v}\mathbf{X}\right) \; \; (\textnormal{resp. } dY=f\left( Y\right) d\left( M_{v}\mathbf{X}\right) )
\]
if and only if $Y$ is an RDE solution flow to  
\[
dY= f^v\left( Y\right) d\mathbf{X}.
\]

\item \label{point:RDE2}  Let notation be as in Theorem~\ref{thm:transRPs} part~\ref{point:trans2}. Then $Y$ is an RDE solution flow to 
\[
dY=f\left( Y\right) d\left( T_{v}\mathbf{X}\right) \; \; (\textnormal{resp. } dY=f\left( Y\right) d\left( M_{v}\mathbf{X}\right) )
\]
if and only if $Y$ is an RDE solution flow to
\[
dY= f^v(Y)d\mathbf{X} \equiv f\left( Y\right) d\mathbf{X}+f_{v_{0}}\left( Y\right) dX^{0}.
\]
\end{enumerate} 
\end{theorem}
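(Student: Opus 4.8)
The plan is to reduce the entire theorem to a single algebraic identity between vector fields, obtained from the universal property of the free pre-Lie algebra (Theorem~\ref{thm:freePreLie}), and then to feed that identity into the description of RDE solution flows through their local B-series expansion against the driving rough path. Two preliminary reductions streamline this. First, part~\ref{point:RDE2} is part~\ref{point:RDE1} in disguise: when $v=(v_0,0,\dots,0)$ the definition of $f^v$ in Section~\ref{subsec:effectsRDEs} gives $f^v=(f_0+f_{v_0},f_1,\dots,f_d)$, so $f^v(Y)\,d\mathbf{X}=f(Y)\,d\mathbf{X}+f_{v_0}(Y)\,dX^0$, and condition~\eqref{eq:mixedVar} both forces $X^0$ to be Lipschitz and, via Theorem~\ref{thm:transRPs}~\ref{point:trans2}, upgrades $T_v\mathbf{X}$ (resp. $M_v\mathbf{X}$) to a genuine $\alpha\wedge(1/N)$-Hölder rough path, so all RDEs involved are well posed. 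Second, the geometric statement reduces to the branched one: under the embedding $\imath$ of~\eqref{eq:tensorForestEmbed} the geometric RDE $dY=f(Y)\,d\mathbf{X}$ agrees with $dY=f(Y)\,d\imath(\mathbf{X})$ (a standard compatibility, see~\cite{HairerKelly15} Section~3), one has $\imath(T_v\mathbf{X})=M_{\imath(v)}\imath(\mathbf{X})$ by the relation $M_{\imath(v)}\circ\imath=\imath\circ T_v$ recorded after the definition of $M_v$, and $f^{v}=f^{\imath(v)}$ because the canonical Lie-type extension $e_i\mapsto f_i$ on $\LL(\R^{1+d})$ is the unique Lie morphism to $\Vect(\R^e)$ extending the generators and therefore factors through the pre-Lie extension $\bullet_i\mapsto f_i$ precomposed with $\imath$. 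Thus it suffices to prove the branched version of part~\ref{point:RDE1}.

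For that, write $\Upsilon\colon\BB\mapsto\Vect(\R^e)$ for the elementary-differential (B-series) map, i.e.\ the unique pre-Lie morphism $(\BB,\curvearrowright)\mapsto(\Vect(\R^e),\triangleright)$ extending $\bullet_i\mapsto f_i$, so that $\Upsilon(\tau)=f_\tau$ in the notation of Section~\ref{subsec:effectsRDEs}; let $\Upsilon^v$ be the analogous map built from $f^v$. By Theorem~\ref{thm:branchedAlgMorph}, $M_v$ restricts to a pre-Lie morphism $\BB\mapsto\BB^*$, and a composition of pre-Lie morphisms is a pre-Lie morphism, so both $\Upsilon\circ M_v$ and $\Upsilon^v$ are pre-Lie morphisms out of $\BB$. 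On the generators both send $\bullet_i\mapsto f_i+\Upsilon(v_i)=f_i+f_{v_i}$; by uniqueness in Theorem~\ref{thm:freePreLie} they are equal, which is the identity $\Upsilon\circ M_v=\Upsilon^v$ I want. (One small caveat to be spelled out: $f^v$ is defined through the truncated space $\BB^N$, on which the extension need not be a pre-Lie morphism per the Remark in Section~\ref{subsec:effectsRDEs}; this is harmless, since every B-series below involves only finitely many trees and one may carry out the comparison in the untruncated $\BB$.)

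It remains to transport this to the level of RDEs. I would use Davie's characterization (see~\cite{Gubinelli10,HairerKelly15}): for a branched rough path $\mathbf{Z}$ and vector fields $g$, $Y$ is a solution flow of $dY=g(Y)\,d\mathbf{Z}$ if and only if $Y_{s,t}$ coincides, up to a remainder that is $O(|t-s|^{\theta})$ for some $\theta>1$ uniformly over $s<t$, with the truncated B-series $\sum_{\tau}c_\tau\gen{\mathbf{Z}_{s,t},\tau}\Upsilon_g(\tau)(Y_s)$, the sum running over trees $\tau$ with $|\tau|\le\floor{1/\alpha}$ and $c_\tau$ the usual combinatorial weights. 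Taking $\mathbf{Z}=M_v\mathbf{X}$ and $g=f$, pairing $M_v\mathbf{X}_{s,t}$ against each $\tau$ and regrouping by Proposition~\ref{prop:adjointMv} (equivalently, using $\gen{(M_v\mathbf{X})_{s,t},\tau}=\gen{\mathbf{X}_{s,t},M_v^*\tau}$) and then substituting $\Upsilon\circ M_v=\Upsilon^v$, the B-series for $dY=f(Y)\,d(M_v\mathbf{X})$ turns termwise into the B-series for $dY=f^v(Y)\,d\mathbf{X}$; since a path is a solution flow of an RDE exactly when it matches the corresponding B-series, the two RDEs have the same solution flows. (Alternatively, for the geometric case one may argue directly along the same lines using Proposition~\ref{prop:adjointTv} and the free-Lie-algebra universal property applied to the log-formulation of the expansion, without routing through $\imath$.)

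I expect the main obstacle to be this last step rather than the algebra: one must pin down the precise form of the B-series/Davie expansion in the \emph{branched} setting --- the interplay of trees versus forests and the correct combinatorial coefficients $c_\tau$ --- and verify that the remainders stay sewable, so that ``having the prescribed B-series'' really is equivalent to ``being a solution flow''. Once that is in place, the identification $\Upsilon\circ M_v=\Upsilon^v$ is immediate from the universal property and is the conceptual core; everything else is bookkeeping around the duality $M_v\leftrightarrow M_v^*$ (Proposition~\ref{prop:adjointMv}) and the regularity estimates already furnished by Theorem~\ref{thm:transRPs}.
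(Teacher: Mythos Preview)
Your approach is essentially the same as the paper's. The algebraic core is identical: both arguments establish $\Upsilon\circ M_v=\Upsilon^v$ (in the paper's notation, $\Phi_f\circ M_v=\Phi_{f^v}$) via the universal property of the free pre-Lie algebra, first assuming smooth vector fields so the pre-Lie extension is genuine and then observing the resulting identity is purely algebraic and survives truncation. The analytic step is also the same in spirit---matching local expansions---though the paper dispatches your ``main obstacle'' simply by citing the Euler RDE estimate of~\cite{FrizVictoir10} Corollary~10.15 (geometric) and~\cite{HairerKelly15} Proposition~3.8 (branched), which packages exactly the Davie-type characterization you describe without requiring you to track the combinatorial weights $c_\tau$ by hand. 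Your reductions (part~\ref{point:RDE2} to~\ref{point:RDE1}, geometric to branched via $\imath$) are correct and are noted in the paper as Remark~\ref{remark:geobranchEmbed}, though the paper still spells out the geometric case first for expository reasons.
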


\begin{remark}\label{remark:geobranchEmbed}
Since the space of weakly geometric rough paths embeds into the space of branched rough paths using the map~\eqref{eq:tensorForestEmbed}, the statements in Theorem~\ref{thm:transRDEs} for weakly geometric rough paths are a special case of those for branched rough paths. We make a distinction between the two cases only for clarity.
\end{remark}

\begin{proof}
For clarity, we first prove the statement for geometric rough paths and then generalise to branched rough paths (although by Remark~\ref{remark:geobranchEmbed}, it suffices to prove the statement only in the branched case). 

Observe that for weakly geometric rough paths,~\ref{point:RDE1} will follow directly from the usual Euler RDE estimate (\cite{FrizVictoir10} Corollary~10.15)
once we show that
\begin{equation}\label{eq:geoEuler}
\sum_{|u| \leq \floor{1/\alpha}} \scal{\mathbf{X}_{s,t},u} f^v_u(y) = \sum_{|u| \leq \floor{N/\alpha}} \scal{T_v \mathbf{X}_{s,t}, u} f_u(y) + r_{s,t}, \; \; \text{for all } y \in \R^e, \; \; s,t \in [0,T],
\end{equation}
where $|r_{s,t}| = o(|t-s|)$ and where the sums run over any orthonormal basis of $\LL^{\floor{1/\alpha}}(\R^{1+d})$ and $\LL^{\floor{N/\alpha}}(\R^{1+d})$ respectively.

Consider for the moment that $f = (f_0,\ldots, f_d)$ is a collection of smooth vector fields, so that $\Phi_f : u \mapsto f_u$
is a genuine Lie algebra morphism from $\LL(\R^{1+d})$ into $\Vect^\infty(\R^e)$. Hence, whenever $f$ are smooth, the maps $\Phi_f \circ T_v$ and $\Phi_{f^v}$ are both Lie algebra morphisms from $\LL(\R^{1+d})$ into $\Vect^\infty(\R^e)$ which furthermore agree on the generators $e_i$. Thus $\Phi_f \circ T_v = \Phi_{f^v}$, and so
\begin{equation}\label{eq:LieMorph}
\sum_{u} \scal{x,u}f^v_u = \sum_{u} \scal{T_v x,u}f_u, \; \; \text{for all } x \in \LL(\R^{1+d}),
\end{equation}
where both sums run over any orthonormal basis of $\LL(\R^{1+d})$. This proves~\eqref{eq:geoEuler} for smooth $f = (f_0,\ldots, f_d)$. For the general case where $f$ are only sufficiently regular for the stated RDEs to make sense, we note that equality~\eqref{eq:LieMorph} is purely algebraic, so~\eqref{eq:geoEuler} can be readily deduced by truncation.

To extend this argument to the case of branched rough paths, ~\ref{point:RDE1} will follow directly from the Euler estimate derived in~\cite{HairerKelly15} Proposition~3.8 once we show that
\begin{equation}\label{eq:Euler}
\sum_{\tau \in \BB^{\floor{1/\alpha}}} \scal{\mathbf{X}_{s,t},\tau} f^v_\tau(y) = \sum_{\tau \in \BB^{\floor{N/\alpha}}} \scal{M_v \mathbf{X}_{s,t},\tau} f_\tau(y) + r_{s,t}, \; \; \text{for all } y \in \R^e, \; \; s,t \in [0,T],
\end{equation}
where $|r_{s,t}| = o(|t-s|)$ and where the sums run over all trees $\tau$ in $\BB^{\floor{1/\alpha}}$ and $\BB^{\floor{N/\alpha}}$ respectively.

As before, suppose first that $f = (f_0,\ldots, f_d)$ is a collection of smooth vector fields, so that $\Phi_f : x \mapsto f_x \equiv \sum_{\tau\in \BB} \scal{x,\tau}f_\tau$ is a pre-Lie algebra morphism from $\BB$ into $\Vect^\infty(\R^e)$. Hence, whenever $f$ are smooth, the maps $\Phi_f \circ M_v$ and $\Phi_{f^v}$ are both pre-Lie algebra morphisms from $\BB$ into $\Vect^\infty(\R^e)$ which furthermore agree on the generators $\bullet_i$. Thus $\Phi_f \circ M_v = \Phi_{f^v}$, and so
\[
\sum_{\tau \in \BB} \scal{x,\tau}f^v_\tau = \sum_{\tau \in \BB} \scal{M_v x,\tau}f_\tau, \; \; \text{for all } x \in \BB.
\]
As the above equality is purely algebraic, we again deduce~\eqref{eq:Euler} by truncation in the general case where $f$ are only sufficiently regular for the stated RDEs to make sense.

The desired result in~\ref{point:RDE2} for both geometric and branched rough paths follows in the same way.
\end{proof}

\begin{remark} \label{rem:new52}Recall that $M_v : \BB^* \rightarrow \BB^*$ was constructed, from Section \ref{subsubsec:PreLie} on, as a pre-Lie algebra morphism. This matters in part~\ref{point:RDE1} of Theorem~\ref{thm:transRDEs} above, where this property is needed to obtain a universal conversion formula for translated RDEs. For example, consider that $M_v$ was replaced by an algebra morphism $M$ (which satisfies the conditions of Remark \ref{remark:algMorphs}) such that $M(\bullet_i) = \bullet_i$ for all $i=0,...,d$, but acted non-trivially on some higher order trees (so that $M$ is not a pre-Lie morphism). Then given vector fields $f$, in general there does not exist another collection of vector fields $f_v$ such that for every branched rough path $\mathbf{X}$, the RDE driven by $M(\mathbf{X})$ along vector fields $f$ agrees with the RDE driven by $\mathbf{X}$ along $f_v$. Indeed, if such $f_v$ existed, then for every geometric (branched) rough path $\mathbf{X}$ (so that $M(\mathbf{X}) = \mathbf{X}$), the RDEs driven by $M(\mathbf{X})$ and $\mathbf{X}$ agree without the need to change the vector fields $f$, so that necessarily $f_v = f$. However if $\mathbf{X}$ is a non-geometric branched rough path, the RDE driven by $M(\mathbf{X})$ along vector fields $f$ will not in general agree with the RDE driven by $\mathbf{X}$ along $f$.
\end{remark}

\section{Link with renormalization in regularity structures}
\label{sec:renorm}

We now recall several notions from the theory of regularity structures and draw a link between the map $\extract$ from Section~\ref{subsec:DualMapBranched} and the coproduct $\Deltam$  associated to negative renormalization~\cite{BHZ16, Hairer16}. In particular, we demonstrate how negative renormalization maps on the regularity structure associated to branched rough paths carry a natural interpretation as rough path translations (see Theorem~\ref{thm:NegRenorm} below).

\subsection{Regularity structures}

{\it Regularity structures usually deal with (e.g. SPDE solutions) $u=u(z)$ where $z \in \R^n$ (e.g. space-time), $u$ takes values in $\R$ (or $\R^e$). Equations further involve a $\beta$-regularizing kernel, and there are $d$ sources of noise, say $\xi_1,...,\xi_d$, of arbitrary (negative) order $\alpha_{\min}$, as long as the equation is subcritical.}

\subsubsection{Generalities}  

{\it We review the general (algebraic) setup in the case $n=1$, $\beta = 1$ and $\alpha_{\min} \in (-1,0)$. }

\medskip

In the spirit of Hairer's formalism, consider the equation
\begin{equation}\label{eq:RDEEq}
u(t) = u(0) + \left(K * \sum_{i=1}^d f_i(u(\cdot))\xi_i(\cdot) \right)(t), \; \; t \in \R,
\end{equation}
where $u(t)$ is a real-valued function for which we solve, $\xi_i(t)$ are driving noises, $f_i$ are smooth functions on $\R$ (one could readily extend to the case that $u$ takes values in $\R^e$ and $f_i$ are vector fields on $\R^e$), and $K$ is a kernel which improves regularity by order $\beta = 1$.

\begin{remark}
The example to have in mind here is $K(s) = \exp (- \lambda s) 1_{s>0}$, which allows to incorporate an additional linear drift term (``$-\lambda u dt$''), or of course the case 
$\lambda =0$, i.e. the Heaviside step function, which leads to the usual setting of controlled differential equations. We shall indeed specialize to the Heaviside case in subsequent
sections, as this simplifies some algebraic constructions and so provides a clean link to rough path structures. For the time being, however, we find it instructive to work
with a general $1$-regularizing $K$, as this illustrates the need for polynomials decorations as well as symbols $\mathcal{J}_k$, representing $k$-th derivatives of the kernel.
\end{remark}

Our driving noises $\xi_i(t)$ should be treated as distributions on $\R$ of regularity $C^{\alpha-1}$ for some $\alpha \in (0, 1)$ (which will later correspond to the case of $\alpha$-H{\"o}lder branched rough paths). In the case that $\alpha \leq 1/2$, due to the product $f_i(u)\xi_i$,~\eqref{eq:RDEEq} is singular and thus cannot in general be solved analytically. However the equation is evidently sub-critical in the sense of~\cite{Hairer14}, and so one can build an associated regularity structure.

\medskip

{\bf Introducing the symbols} 

\medskip

We first collect all the symbols of the regularity structure required to solve~\eqref{eq:RDEEq} and which is stable under the renormalization maps in the sense of~\cite{BHZ16}.
Define the linear space $$\TT = \scal{\WW},$$
where $\WW$ is the set of all rooted trees where every node carries a ``polynomial'' decoration $k \in \N \cup \{0\}$ and where every edge which ends on a leaf may be (but is not necessarily) assigned a type $ \mathfrak{t}_{\Xi_i}, $ $i \in \{1,\ldots, d\}$. An edge with type $\mathfrak{t}_{\Xi_i}$ corresponds to the driving noise $ \xi_i $. Every other edge has a type $ \mathfrak{t}_{K} $ which means that it is associated to the kernel $ K $. (For now, we only assume $K$ is $1$-regularizing, later we will take it to be the Heaviside step function.) Also, each node has at most one incoming edge with type belonging to $  \{1,\ldots, d\}$.\footnote{This rules out symbols corresponding to products of noise, such as $\Xi_i \Xi_j$ with $i,j \in   \{1,\ldots, d\}$.} 
With regard to ~\cite{BHZ16}, we also note the absence of edge decorations.\footnote{ This is in contrast to, say, KPZ or $\Phi^4_3$, where edge decorations appear in view of $Du \to \II'$ or renormalization, respectively.}

\medskip
To avoid confusion between the different meaning of trees in $\WW$ and those introduced in Section~\ref{sec:Branched}, we will color every tree in $\WW$ blue. Every such tree has a corresponding symbol representation, e.g.,

\begin{equs}
\tikzexternaldisable  \begin{tikzpicture}[scale=0.2,baseline=0.1cm]
        \node at (0,0)  [fill,circle,scale=0.22,blue,label= {[label distance=-0.2em]below: \scriptsize  $  $} ] (root) {};
        \node at (0,3)  [fill,circle,scale=0.22,blue,label= {[label distance=-0.2em]right: \scriptsize  $  $} ] (center) {};
         \node at (1,1.5)  [label={[label distance=0em]center: \scriptsize  $ \mathfrak{t}_{K} $} ] (right1) {};
     \draw[kernel1] (root) to
     node [sloped,below] {\small }     (center);
     \end{tikzpicture} \color{black} \leftrightarrow   \begin{tikzpicture}[scale=0.2,baseline=0.1cm]
        \node at (0,0)  [fill,circle,scale=0.22,blue,label= {[label distance=-0.2em]below: \scriptsize  $  $} ] (root) {};
        \node at (0,3)  [fill,circle,scale=0.22,blue,label= {[label distance=-0.2em]right: \scriptsize  $  $} ] (center) {};
         \node at (0.8,1.5)  [label={[label distance=0em]center: \scriptsize  $  $} ] (right1) {};
     \draw[kernel1,blue,thin] (root) to
     node [sloped,below] {\small }     (center);
     \end{tikzpicture} \color{black} \leftrightarrow \II, \quad \tikzexternaldisable {{ \begin{tikzpicture}[scale=0.2,baseline=0.1cm]
        \node at (0,0)  [fill,circle,scale=0.22,blue,label= {[label distance=-0.2em]below: \scriptsize  $  $} ] (root) {};
        \node at (0,3)  [fill,circle,scale=0.22,blue,label= {[label distance=-0.2em]right: \scriptsize  $  $} ] (center) {};
         \node at (1,1.5)  [label={[label distance=0em]center: \scriptsize  $ \mathfrak{t}_{\Xi_i} $} ] (right1) {};
     \draw[kernel1] (root) to
     node [sloped,below] {\small }     (center);
     \end{tikzpicture}} } \leftrightarrow
     \begin{tikzpicture}[scale=0.2,baseline=0.1cm]
        \node at (0,0)  [fill,circle,scale=0.22,blue,label= {[label distance=-0.2em]below: \scriptsize  $  $} ] (root) {};
        \node at (0,3)  [fill,circle,scale=0.22,blue,label= {[label distance=-0.2em]right: \scriptsize  $  $} ] (center) {};
     \draw[kernel1,blue,thin] (root) to
     node [sloped,below] {\small }     (center);
     \node at (0,1.5)  [fill=white,label={[label distance=0em]center: \scriptsize  $ i $} ] (right1) {};
     \end{tikzpicture}  
  \leftrightarrow \Xi_{i}, \quad
      \quad \tikzexternaldisable   \begin{tikzpicture}[scale=0.2,baseline=0.1cm]
        \node at (0,1)  [fill,circle,scale=0.22,blue,label= {[label distance=-0.2em]below: \scriptsize  $ k $} ] (root) {};
     \end{tikzpicture} \color{black} \leftrightarrow X^k,
 \end{equs}

\begin{equs} 
\begin{tikzpicture}[scale=0.2,baseline=0.1cm]
        \node at (0,0)  [fill,circle,scale=0.22,blue,label= {[label distance=-0.2em]right: \tiny  $ 6 $} ] (root) {};
        \node at (0,3)  [fill,circle,scale=0.22,blue,label= {[label distance=-0.2em]right: \scriptsize  $  $} ] (center) {};
        \node at (0,6)  [fill,circle,scale=0.22,blue,label= {[label distance=-0.2em]right: \scriptsize  $  $} ] (centerc) {};
         \node at (-2,6)  [fill,circle,scale=0.22,blue,label= {[label distance=-0.2em]left: \tiny  $ 7 $} ] (centerl) {};
         \node at (2,6)  [fill,circle,scale=0.22,blue,label= {[label distance=-0.2em]right: \tiny  $ 5 $} ] (centerr) {};
         \node at (2,9)  [fill,circle,scale=0.22,blue,label= {[label distance=-0.2em]right: \tiny  $  $} ] (centerrc) {};
         \node at (0,9)  [fill,circle,scale=0.22,blue,label= {[label distance=-0.2em]right: \tiny  $  $} ] (centercc) {};
     \draw[kernel1,blue,thin] (root) to
     node [sloped,below] {\small }     (center);
     \draw[kernel1,blue,thin] (center) to
     node [sloped,below] {\small }     (centerl);
     \draw[kernel1,blue,thin] (center) to
     node [sloped,below] {\small }     (centerr);
     \draw[kernel1,blue,thin] (center) to
     node [sloped,below] {\small }     (centerc);
     \draw[kernel1,blue,thin] (centerc) to
     node [sloped,below] {\small }     (centercc);
     \draw[kernel1,blue,thin] (centerr) to
     node [sloped,below] {\small }     (centerrc);
      \node at (0,7.5)  [fill=white,label={[label distance=0em]center: \tiny  $ 1 $} ] (right1) {};
       \node at (2,7.5)  [fill=white,label={[label distance=0em]center: \tiny  $ 2 $} ] (right11) {};
     \end{tikzpicture} 
 \leftrightarrow \II(\II(\Xi_1) \II(\Xi_2 X^5) \II(X^7) ) X^6,
\end{equs}
where we implicitly drop the $0$ decoration ($ \leftrightarrow X^0 $) from the nodes. It is instructive to check that $\WW$ provides an example of a structure built from a subcritical complete rule (in the sense of~\cite{BHZ16} Section~5) arising from the equation~\eqref{eq:RDEEq}. Indeed, we can give the set of rules used for the construction of
\begin{equs}
R(\Xi_i) = \lbrace  () \rbrace, \quad R(\II) = \lbrace  ([\II]_{\ell}), ([\II]_{\ell},\Xi_i), \ell \in \mathbb{N} \cup \{ 0 \}, i \in \lbrace 1,...,d \rbrace \rbrace.
\end{equs}
The notation $ [\II]_{\ell} $ is a shorthand notation for $ \II,...,\II $ where $ \II $ is repeated $ \ell $ times.

We define a degree $ |\cdot| $ associated to an edge type and a decorated tree. For edge types and polynomials, we have
\[
 | \Xi_i | =  \alpha-1, \quad | \II | =1, \quad | X^k | = k. 
\]
Then by recursion,
\[
  | \II(\tau) | = | \tau | + |\II|, \quad \left| \prod_i \tau_i \right| = \sum_i |\tau_i| .  
\] 
For a non-recursive definition see~\cite{BHZ16} where the degree is described through a summation over all the edge types and the decorations in the tree.

\begin{remark} Remark that $\WW  \equiv \WW_{\BHZr}$ (the ``r'' in BHZr refers to {\it reduced}, in the terminology of~\cite{BHZ16} these are trees without any extended decorations) will contain certain symbols which do {\it not}  arise if one follows the original procedure of~\cite{Hairer14} (which, in some sense, is the most economical way to build the structure):
$$
     \WW_{\Hai} \subset  \WW_{\BHZr} \subset  \WW_{\BHZ}.
$$
Indeed in~\cite{Hairer14}, the set of rules is not necessarily complete so one has to add terms by hand coming from the renormalization procedure and in the end one works with a space 
$ \bar \WW_{\Hai} $ lying between $ \WW_{\Hai} $ and $ \WW_{\BHZr} $.
For example, $\II(\Xi_i)\II(\Xi_j)$, $\II (\II (\Xi_k))$, and $\II() \equiv \II(X^0)$ do not appear in $\WW_{\Hai}$, but all of these appear in $\WW_{\BHZr}$. These in turn are embedded in $\WW_{\BHZ}$, a set of trees with extended decorations on the nodes and also colourings of the nodes which give more algebraic properties. In the setting of~\cite{BHZ16}, we would work with an additional symbol $ \one_{\alpha} $ for $ \alpha \in \mathbb{R} $, representing an extended decoration, which provides information on some ``singular'' (negative degree) tree which has been removed, and all of these symbols are would be placed using a complete set of rules.
\end{remark} 

\medskip

{\bf Introducing  $ \TT_- $} 

\medskip

We define the space $ \TT_- $ as 
\begin{equ}[def:Tm]
  \TT_- = \lbrace \tau_1 \bullet \cdots \bullet \tau_n, \; \tau_i \in \WW, \; | \tau_i | < 0\rbrace.
\end{equ}
where $ \bullet $ is the forest product and the unit is given by the empty forest. (In other words,  $\TT_-$ is the free unital commutative algebra generated by elements 
in $\WW$ of negative degree.)
We now recall that $\TT_-$ can be equipped with a Hopf algebra structure $\TT_-$ for which there exists a coaction $\Deltam : \TT \rightarrow \TT_- \otimes \TT$ such that $(\TT, \Deltam)$ is a (left) comodule over $\TT_-$. Then the action of a character $\ell \in \TT_-^*$ on $x \in \TT$, termed ``negative renormalization'', is given by $M_\ell x = (\ell\otimes \id)\Deltam x$. 

Following ~\cite{Hairer16} Section~2 we can describe the coaction $\Deltam$ as follows. Fix a tree $\tau \in \WW$, consider
a subforest $A \subset \tau$, i.e., an arbitrary subgraph of $\tau$ which contains
no isolated vertices. We then write $R_A \tau$ for the 
tree obtained by contracting the connected components of $A$ in $\tau$.
With this notation at hand, we then define a linear map, the coaction,  
$$\Deltam \colon  \TT \to \TT_- \otimes  \TT$$ 
by setting, for $\tau \in  \WW$,
\begin{equs}\label{e:Deltabar}
\Deltam \tau &= 
 \sum_{A \subset \TT_-}  A \otimes R_A \tau.
\end{equs}
Unfortunately, this is not quite the correct coaction as it does not handle correctly the powers of $X$. However, upon restriction to $\tilde \TT \subset \TT$, as done in detail in the next section, this is precisely the form of the coaction (now on $\tilde \TT$). When moving to a coproduct this fortunately plays no role (since $\TT_-$ does not contain any non-zero powers of $X$ or a factor of the form $\II()$). By abuse of notation, $\Deltam$ also acts as a coproduct, that is 
\begin{equ} \label{Dmcp}
\Deltam \colon \TT_- \to \TT_- \otimes \TT_-.
\end{equ}
To be explicit, given $f = \tau_1 \cdots \tau_n \in \TT$, we have $\Deltam (f) = \Deltam (\tau_1) ... \Deltam(\tau_n)$ with each $\Deltam (\tau_i)$ as defined above, but with an additional projection to the negative trees on the right-hand side of the tensor-product. 

 \begin{remark} \label{cured_trees}The spaces $ \TT_- \equiv \TT^-_{\BHZr} $, 
 $ \TT^-_{\BHZ} $ and $ \TT^-_{\Hai} $ are the same {\it in this framework} (cf. assumptions from the beginning of this subsection). Indeed, all negative trees of $ \WW $ have a degree of the form $ N \alpha -1 $. Then if we remove one negative subtree, of degree $M \alpha -1 $ say, from a negative tree, we obtain a degree $(N-M)\alpha$ which is positive and hence the ``cured'' tree does not belong to $ \TT_- $. 
 \end{remark}

\medskip
{\bf Introducing  $ \TT_+ $} 
\medskip

In order to describe the space $ \TT_+ $ as in~\cite{BHZ16}, we need to associate to each edge a decoration $ k \in \mathbb{N} \cup \lbrace  0 \rbrace $ viewed as a derivation of the kernels or the driving noises. Such a decoration does not appear in $ \TT $. Thus we will replace the letter $ \II $ by $ \JJ $ in this context.  We do not give any graphical notation for $ \JJ_k $, the edge with type $ \mathfrak{t}_{K} $ and (edge) decoration $ k $ representing $ K^{(k)} $, because these symbols ultimately will not appear in our context.

\medskip

We define $ \TT_+ $  as the linear span of 
\[
\lbrace  X^k \prod_{i=1}^n  \mathcal{J}_{k_i}(\tau_i) \mid k,n \in \N \cup\{ 0\}, k_i \in \N \cup\{ 0\}, \tau_i \in \WW,\; |\tau_i| + 1 - k_i > 0 \rbrace.
\]
In other words,  $\TT_+$ is the free unital commutative algebra generated by 
\[
\WW_+ := \{X\} \cup \{\mathcal{J}_k\tau \mid \tau \in \WW, |\tau|+1-k > 0 \}.
\]
We use a different letter $ \mathcal{J} $ to stress that $ \WW $ is different from $ \WW^+ $. Moreover, the use of this letter is viewed in~\cite{BHZ16} as a colouration of the root and plays a role in the sequel. We also define the degree of a term
\[
\tau = X^k \prod_{i=1}^n  \mathcal{J}_{k_i}(\tau_i) \in \TT_+, \; \; |\tau| = k + \sum_{i=1}^n 1 - k_i + |\tau_i|.
\]

The space $\TT_+$ is used in the description of the structure group associated to $\TT$. More precisely, recall that $\TT_+$ can be equipped with a Hopf algebra structure for which there exists a coaction $\Deltap: \TT \rightarrow \TT \otimes \TT_+$ such that $(\TT, \Deltap)$ is a (right) comodule over $\TT_+$. Following Hairer's survey \cite{Hairer16}, the coaction
\begin{equ}[e:deltapFirst]
\Deltap \colon \TT \to \TT \otimes \TT_+
\end{equ}
is given by 
\begin{equ}[e:deltap1]
\Deltap X_i = X_i \otimes \one + \one \otimes X_i\;,\qquad
\Deltap \Xi_i = \Xi_i \otimes \one\;,
\end{equ}
and then recursively by
\begin{equ}[e:recDelta]
\Deltap \II(\tau) = (\II \otimes \id)\Deltap \tau
+ \sum_{\ell \in \N \cup \{0\}} \frac{X^\ell}{\ell!} \otimes \JJ_{\ell}(\tau)
\end{equ}
and
\begin{equation}\label{eq:DeltapMult}
\Deltap (\tau \bar \tau) = \Deltap \tau\,\Deltap \bar \tau.
\end{equation}
The coproduct $\Deltap: \TT_+ \rightarrow \TT_+ \otimes \TT_+$ is then defined in the same way by replacing~\eqref{e:recDelta} with
\begin{equ}
\Deltap \JJ_k(\tau) = (\JJ_k \otimes \id)\Deltap \tau
+ \sum_{\ell \in \N \cup \{0\}} \frac{X^\ell}{\ell!} \otimes \JJ_{k+\ell}(\tau),
\end{equ}
in which $\Deltap \tau$ is understood as the coaction $\Deltap : \TT \rightarrow \TT\otimes\TT_+$.

Then the action of a character $g \in \TT_+^*$ on $x \in \TT$, termed ``positive renormalization'', is given by 
$$
\Gamma_g x = (\id \otimes g)\Deltap x.
$$

\begin{remark} The space $ \TT_+ \equiv  \TT^+_{\BHZr} $ depends strongly on the space $ \WW $. We have 
$$
     \TT^+_{\Hai} \subset  \TT^+_{\BHZr} \subset  \TT^+_{\BHZ}.
$$
These two inclusions are Hopf subalgebra inclusions.
Indeed, as proved in~\cite{BHZ16}, the second one, with
 $ \TT_+$ equipped with coproduct $\Deltap$ is a Hopf subalgebra inclusion (with $\Deltap_{\BHZ}$ found in \cite[(4.14)]{BHZ16}). The same is also true for $ \TT^+_{\Hai} $. The key point for the Hopf algebra structure is that, in the terminology of ~\cite{BHZ16}, the symbols defined in \cite{Hairer14} and \cite{BHZ16} are obtained by a ``normal rule'' which guarantees the invariance under $ \Deltap $.
 In the case of $ \TT^{+}_{\BHZ} $, we use  the degree $ | \cdot |_+ $ which is exactly $ |\cdot| $ when we restrict ourselves to $ \TT^{+}_{\BHZr} $. 
\end{remark}

\begin{remark} \label{cointeraction}
Unfortunately, there is a problem here in that, with the definition in equation~\eqref{e:recDelta}, a desirable cointeraction between $\Deltap$ and $\Deltam$ fails as we shall explain momentarily.
The ``official'' remedy, following \cite{BHZ16}, is to use the extended decorations through another degree $ |\cdot|_+ $ which takes into account these decorations and behaves the same as $ |\cdot| $ for the rest. For example, one has $ | \II(\one_{\beta} \tau) |_+ = | \tau |_++1 + \beta $. The ``correct'' coaction $ \Deltap $ (see \cite[(4.14)]{BHZ16})  then also involves these extended decorations. The extended decorations are crucial in~\cite{BHZ16} for obtaining a  cointeraction between the two Hopf algebras $ (\TT_+,\Deltap) $ and $ (\TT_-,\Deltam) $:
 \[
  \mathcal{M}^{(13)(2)(4)} \left( \Deltam \otimes \Deltam \right) \Deltap = \left( \id \otimes \Deltap \right) \Deltam
 \]
 where $  \mathcal{M}^{(13)(2)(4)} $ is given as $ \mathcal{M}^{(13)(2)(4)} \left( \tau_1 \otimes \tau_2 \otimes \tau_3 \otimes \tau_4 \right) = \left( \tau_1 \bullet \tau_3 \right) \otimes \tau_2 \otimes \tau_4 $. This identity is both true on $ \TT $ through the comodule structures and on $ \TT_+ $ when the coproduct $ \Deltam $ is viewed as an action on $ \TT_+ $. We have already came across something similar in Lemma \ref{lem:adjointComm}, but in that case the maps involved were not really coproducts. In our simple framework, this property is not satisfied if we just consider the reduced structure. One can circumvent this issue without introducing extended decorations by changing the coproduct $\Deltap$ to the form~\eqref{e:recDeltaNew} given below. This approach is possible in our context (specifically, minimal degree $\alpha-1 > -1$ and $1$-regularizing kernel) because we know {\it a priori} that each edge type $ \II $ in the elements of $ \WW$ with negative degree has the same ``Taylor expansion'' of length $ 1 $ in \eqref{e:recDelta} ($ \ell = 0 $). In general, we would use the extended decorations to maintain this property, however, in
the specific setting of the Heaviside kernel, {\it to which we will specialize from this moment on to the rest of the paper,} we can just fix the length in the coproduct and not use the extended decorations. That is, we can get away by replacing~\eqref{e:recDelta} with the same formula, but only keeping $\ell = 0$ in the sum. Specifically, with $\JJ \equiv \JJ_0$ this amounts to make the (recursive) definition of $\Deltap$ with (\ref{e:recDelta}) replaced by
\begin{equ}\label{e:recDeltaNew}
\Deltap \II(\tau) = (\II \otimes \id)\Deltap \tau
+ 1 \otimes \JJ(\tau).
\end{equ}
We can also get rid of colours when we have no derivatives on the edges at the root: if we want to extract from $ \II(\tau_1 \Xi_i)  \II(\tau_2 \Xi_j) $ all the negative subtrees, we observe that it is not possible to extract one at the root, and thus are only left with negative subtrees in $\tau_1 \Xi_i $ and $ \tau_2 \Xi_j $, which ensures that
\[
  M_{\ell}  \II(\tau_1 \Xi_i) \II(\tau_2 \Xi_j)  = \II\left( M_{\ell} \left( \tau_1 \Xi_i\right) \right)  \II\left(   M_{\ell} \left(\tau_2 \Xi_j\right) \right).
\]
In the setting of~\cite{BHZ16}, this multiplicativity property is encoded by a colour at the root which avoids the extraction of a tree containing the root.
\end{remark}

\subsubsection{The case of rough differential equations} \label{subsubsec:RDEs}

{\it As in the last subsection: $n=1$, $\beta = 1$ and noise degree $\alpha_{\min} \in (-1,0) >-1$. We further specialize the algebraic set in that no symbols $\mathcal{J}_k$ and polynomials $X^k$ with $ k>0$ are required in describing $\TT_+$. }

\medskip

Assuming $K$ to be the Heaviside step function, all derivatives (away from the origin) are zero, hence there is no need (with regard to $\WW$)  to have any polynomial symbols ($X^k$ with $k > 0$). Removing these from $\WW$ leaves us with $\tilde \WW \subset \WW$ which we may list as

\begin{align}
\begin{split}
 \tilde \WW = \{ \Xi_i, ... & , \II(\Xi_i)\II(\Xi_j)\Xi_k,  ... , 1, \II(\Xi_i), \II(\Xi_i) \II(\Xi_j), ...  \label{equ:WWap} \\     
&  ... , \II ( \II(\Xi_i)\II(\Xi_j)  \Xi_k ), \II (  \II(\Xi_i) \II(\Xi_j)),  ... , \II()\II(), \II(\II()),... \},
\end{split}
\end{align}
(all indices are allowed to vary from $1,...,d$), with associated degrees $| \tau |$ as follows:\footnote{tacitly assuming $\alpha < 1/3$} 
$$
                \alpha -1, ..., 3 \alpha -1, .... , 0, \alpha, 2 \alpha, ... \ ...  , 3 \alpha, 2 \alpha +1 , .... , 2, 2, ...
$$
As in the case of $\WW$, elements of $\tilde \WW$  can be viewed as rooted trees, but {\it without} node decorations. For instance,

$$ \begin{tikzpicture}[scale=0.2,baseline=0.1cm]
        \node at (0,0)  [fill,circle,scale=0.22,blue,label= {[label distance=-0.2em]right: \scriptsize  $  $} ] (center) {};
        \node at (0,3)  [fill,circle,scale=0.22,blue,label= {[label distance=-0.2em]right: \scriptsize  $  $} ] (centerc) {};
         \node at (-2,3)  [fill,circle,scale=0.22,blue,label= {[label distance=-0.2em]left: \tiny  $  $} ] (centerl) {};
         \node at (2,3)  [fill,circle,scale=0.22,blue,label= {[label distance=-0.2em]right: \tiny  $  $} ] (centerr) {};
         \node at (-2,6)  [fill,circle,scale=0.22,blue,label= {[label distance=-0.2em]right: \tiny  $  $} ] (centerlc) {};
         \node at (0,6)  [fill,circle,scale=0.22,blue,label= {[label distance=-0.2em]right: \tiny  $  $} ] (centercc) {};
     \draw[kernel1,blue,thin] (center) to
     node [sloped,below] {\small }     (centerl);
     \draw[kernel1,blue,thin] (center) to
     node [sloped,below] {\small }     (centerr);
     \draw[kernel1,blue,thin] (center) to
     node [sloped,below] {\small }     (centerc);
     \draw[kernel1,blue,thin] (centerc) to
     node [sloped,below] {\small }     (centercc);
     \draw[kernel1,blue,thin] (centerl) to
     node [sloped,below] {\small }     (centerlc);
      \node at (-2,4.5)  [fill=white,label={[label distance=0em]center: \tiny  $ i $} ] (right1) {};
       \node at (0,4.5)  [fill=white,label={[label distance=0em]center: \tiny  $ j $} ] (right11) {};
       \node at (1,1.5)  [fill=white,label={[label distance=0em]center: \tiny  $ k $} ] (right11) {};
     \end{tikzpicture} \leftrightarrow  \II(\Xi_i)\II(\Xi_j)\Xi_k, \quad
\begin{tikzpicture}[scale=0.2,baseline=0.1cm]
        \node at (0,0)  [fill,circle,scale=0.22,blue,label= {[label distance=-0.2em]right: \tiny  $  $} ] (root) {};
        \node at (0,3)  [fill,circle,scale=0.22,blue,label= {[label distance=-0.2em]right: \scriptsize  $  $} ] (center) {};
        \node at (0,6)  [fill,circle,scale=0.22,blue,label= {[label distance=-0.2em]right: \scriptsize  $  $} ] (centerc) {};
         \node at (-2,6)  [fill,circle,scale=0.22,blue,label= {[label distance=-0.2em]left: \tiny  $  $} ] (centerl) {};
         \node at (2,6)  [fill,circle,scale=0.22,blue,label= {[label distance=-0.2em]right: \tiny  $  $} ] (centerr) {};
         \node at (-2,9)  [fill,circle,scale=0.22,blue,label= {[label distance=-0.2em]right: \tiny  $  $} ] (centerlc) {};
         \node at (0,9)  [fill,circle,scale=0.22,blue,label= {[label distance=-0.2em]right: \tiny  $  $} ] (centercc) {};
     \draw[kernel1,blue,thin] (root) to
     node [sloped,below] {\small }     (center);
     \draw[kernel1,blue,thin] (center) to
     node [sloped,below] {\small }     (centerl);
     \draw[kernel1,blue,thin] (center) to
     node [sloped,below] {\small }     (centerr);
     \draw[kernel1,blue,thin] (center) to
     node [sloped,below] {\small }     (centerc);
     \draw[kernel1,blue,thin] (centerc) to
     node [sloped,below] {\small }     (centercc);
     \draw[kernel1,blue,thin] (centerl) to
     node [sloped,below] {\small }     (centerlc);
      \node at (-2,7.5)  [fill=white,label={[label distance=0em]center: \tiny  $ i $} ] (right1) {};
       \node at (0,7.5)  [fill=white,label={[label distance=0em]center: \tiny  $ j $} ] (right11) {};
       \node at (1,4.5)  [fill=white,label={[label distance=0em]center: \tiny  $ k $} ] (right11) {};
     \end{tikzpicture}
 \leftrightarrow \II ( \II(\Xi_i)\II(\Xi_j)\Xi_k),$$
are trees ($ \leftrightarrow$ symbols) contained in $\WW$, and also in $\WW_{\Hai}$, the symbols arising in the construction of~\cite{Hairer14}, whereas 
$$
 \begin{tikzpicture}[scale=0.2,baseline=0.1cm]
        \node at (0,0)  [fill,circle,scale=0.22,blue,label= {[label distance=-0.2em]below: \scriptsize  $  $} ] (root) {};
        \node at (-1.5,3)  [fill,circle,scale=0.22,blue,label= {[label distance=-0.2em]right: \scriptsize  $  $} ] (left) {};
        \node at (1.5,3)  [fill,circle,scale=0.22,blue,label= {[label distance=-0.2em]right: \scriptsize  $  $} ] (right) {};
     \draw[kernel1,blue,thin] (root) to
     node [sloped,below] {\small }     (right);
     \draw[kernel1,blue,thin] (root) to
     node [sloped,below] {\small }     (left);
     \end{tikzpicture} 
     \leftrightarrow \II()\II(), \quad  
     \begin{tikzpicture}[scale=0.2,baseline=0.1cm]
        \node at (0,0)  [fill,circle,scale=0.22,blue,label= {[label distance=-0.2em]below: \scriptsize  $  $} ] (root) {};
        \node at (0,2)  [fill,circle,scale=0.22,blue,label= {[label distance=-0.2em]right: \scriptsize  $  $} ] (left) {};
        \node at (0,4)  [fill,circle,scale=0.22,blue,label= {[label distance=-0.2em]right: \scriptsize  $  $} ] (right) {};
     \draw[kernel1,blue,thin] (root) to
     node [sloped,below] {\small }     (right);
     \draw[kernel1,blue,thin] (root) to
     node [sloped,below] {\small }     (left);
     \end{tikzpicture} 
 \leftrightarrow \II( \II()), \quad  
\begin{tikzpicture}[scale=0.2,baseline=0.1cm]
        \node at (0,0)  [fill,circle,scale=0.22,blue,label= {[label distance=-0.2em]right: \scriptsize  $  $} ] (center) {};
         \node at (-1.5,3)  [fill,circle,scale=0.22,blue,label= {[label distance=-0.2em]left: \tiny  $  $} ] (centerl) {};
         \node at (1.5,3)  [fill,circle,scale=0.22,blue,label= {[label distance=-0.2em]left: \tiny  $  $} ] (centerr) {};
         \node at (-1.5,6)  [fill,circle,scale=0.22,blue,label= {[label distance=-0.2em]right: \tiny  $  $} ] (centerlc) {};
         \node at (1.5,6)  [fill,circle,scale=0.22,blue,label= {[label distance=-0.2em]right: \tiny  $  $} ] (centerrc) {};
     \draw[kernel1,blue,thin] (center) to
     node [sloped,below] {\small }     (centerl);
     \draw[kernel1,blue,thin] (center) to
     node [sloped,below] {\small }     (centerr);
     \draw[kernel1,blue,thin] (centerr) to
     node [sloped,below] {\small }     (centerrc);
     \draw[kernel1,blue,thin] (centerl) to
     node [sloped,below] {\small }     (centerlc);
      \node at (-1.5,4.5)  [fill=white,label={[label distance=0em]center: \tiny  $ i $} ] (right1) {};
       \node at (1.5,4.5)  [fill=white,label={[label distance=0em]center: \tiny  $ j $} ] (right11) {};
     \end{tikzpicture}
      \leftrightarrow \II(\Xi_i)\II(\Xi_j), \quad  
\begin{tikzpicture}[scale=0.2,baseline=0.1cm]
        \node at (0,0)  [fill,circle,scale=0.22,blue,label= {[label distance=-0.2em]right: \tiny  $  $} ] (root) {};
        \node at (0,3)  [fill,circle,scale=0.22,blue,label= {[label distance=-0.2em]right: \scriptsize  $  $} ] (center) {};
         \node at (-1.5,6)  [fill,circle,scale=0.22,blue,label= {[label distance=-0.2em]left: \tiny  $  $} ] (centerl) {};
         \node at (1.5,6)  [fill,circle,scale=0.22,blue,label= {[label distance=-0.2em]left: \tiny  $  $} ] (centerr) {};
         \node at (-1.5,9)  [fill,circle,scale=0.22,blue,label= {[label distance=-0.2em]right: \tiny  $  $} ] (centerlc) {};
         \node at (1.5,9)  [fill,circle,scale=0.22,blue,label= {[label distance=-0.2em]right: \tiny  $  $} ] (centerrc) {};
     \draw[kernel1,blue,thin] (root) to
     node [sloped,below] {\small }     (center);
     \draw[kernel1,blue,thin] (center) to
     node [sloped,below] {\small }     (centerl);
     \draw[kernel1,blue,thin] (center) to
     node [sloped,below] {\small }     (centerr);
     \draw[kernel1,blue,thin] (centerr) to
     node [sloped,below] {\small }     (centerrc);
     \draw[kernel1,blue,thin] (centerl) to
     node [sloped,below] {\small }     (centerlc);
      \node at (-1.5,7.5)  [fill=white,label={[label distance=0em]center: \tiny  $ i $} ] (right1) {};
       \node at (1.5,7.5)  [fill=white,label={[label distance=0em]center: \tiny  $ j $} ] (right11) {};
     \end{tikzpicture} \leftrightarrow \II (  \II(\Xi_i) \II(\Xi_j)), 
$$
are contained in $\WW$, following the above construction taken from \cite{BHZ16}, in order to obtain stability under the negative renormalization maps (but not included in $\WW_{\Hai}$.) 

\medskip 

\noindent A linear subspace of $ \TT = \scal{ \WW}$ is then given by
\begin{equ} 
   \tilde \TT := \scal{\tilde \WW}.    \label{def:tTT}
\end{equ} 

\medskip
{\bf Symbols for negative renormalization}
\medskip

 Recall that, thanks to $\beta =1$, noise degree $\alpha -1 \in (-1,0)$, no terms $X, X^2$ or $\II(), ... $ arise as symbol in $\WW_- := \{\tau \in \WW \mid |\tau| < 0 \}$. (As a consequence, replacing $\WW$ by $\WW_{\Hai}, \tilde \WW$ or $\WW_\mathrm{\BHZ}$ in the definition of the negative symbols makes no difference.) In particular,   
$$
\WW_- = \{ \Xi_i, \II(\Xi_i)\Xi_j,  ... , \II(\Xi_i)\II(\Xi_j)\Xi_k,  ... \}.
$$
(where $\WW_-$ ``ends'' right before the element $1$ in \eqref{equ:WWap} above) contains no powers of $ X $, (hence no need to introduce ``$ \tilde{\WW}_- $''). As previously defined (see~\eqref{def:Tm}), we have
$$ \TT_- = \text{free unital commutative algebra generated by }    \WW_-.$$
For instance, writing $\bullet$ for the (free, commutative) product in $\TT_-$,
\[
2 \Xi_i - \frac{1}{3} \Xi_i \bullet \Xi_j + \II(\Xi_i)\Xi_j \bullet ( \II(\Xi_i)\II(\Xi_j)\Xi_k)^{\bullet 2} \quad \in \TT_-.
\]
Interpreting $\bullet$ as the {\it forest product}, elements in $\TT_-$ can then be represented as linear combinations of forests, such as
$$
2 \begin{tikzpicture}[scale=0.2,baseline=0.1cm]
        \node at (0,0)  [fill,circle,scale=0.22,blue,label= {[label distance=-0.2em]below: \scriptsize  $  $} ] (root) {};
        \node at (0,3)  [fill,circle,scale=0.22,blue,label= {[label distance=-0.2em]right: \scriptsize  $  $} ] (center) {};
     \draw[kernel1,blue,thin] (root) to
     node [sloped,below] {\small }     (center);
     \node at (0,1.5)  [fill=white,label={[label distance=0em]center: \scriptsize  $ i $} ] (right1) {};
     \end{tikzpicture}   
-
\frac{1}{3} \begin{tikzpicture}[scale=0.2,baseline=0.1cm]
        \node at (0,0)  [fill,circle,scale=0.22,blue,label= {[label distance=-0.2em]below: \scriptsize  $  $} ] (root) {};
        \node at (0,3)  [fill,circle,scale=0.22,blue,label= {[label distance=-0.2em]right: \scriptsize  $  $} ] (center) {};
     \draw[kernel1,blue,thin] (root) to
     node [sloped,below] {\small }     (center);
     \node at (0,1.5)  [fill=white,label={[label distance=0em]center: \scriptsize  $ i $} ] (right1) {};
     \end{tikzpicture}   
     \begin{tikzpicture}[scale=0.2,baseline=0.1cm]
        \node at (0,0)  [fill,circle,scale=0.22,blue,label= {[label distance=-0.2em]below: \scriptsize  $  $} ] (root) {};
        \node at (0,3)  [fill,circle,scale=0.22,blue,label= {[label distance=-0.2em]right: \scriptsize  $  $} ] (center) {};
     \draw[kernel1,blue,thin] (root) to
     node [sloped,below] {\small }     (center);
     \node at (0,1.5)  [fill=white,label={[label distance=0em]center: \scriptsize  $ j $} ] (right1) {};
     \end{tikzpicture}  
+ \begin{tikzpicture}[scale=0.2,baseline=0.1cm]
        \node at (0,0)  [fill,circle,scale=0.22,blue,label= {[label distance=-0.2em]right: \scriptsize  $  $} ] (center) {};
         \node at (-1.5,3)  [fill,circle,scale=0.22,blue,label= {[label distance=-0.2em]left: \tiny  $  $} ] (centerl) {};
         \node at (1.5,3)  [fill,circle,scale=0.22,blue,label= {[label distance=-0.2em]left: \tiny  $  $} ] (centerr) {};
         \node at (-1.5,6)  [fill,circle,scale=0.22,blue,label= {[label distance=-0.2em]right: \tiny  $  $} ] (centerlc) {};
     \draw[kernel1,blue,thin] (center) to
     node [sloped,below] {\small }     (centerl);
     \draw[kernel1,blue,thin] (center) to
     node [sloped,below] {\small }     (centerr);
     \draw[kernel1,blue,thin] (centerl) to
     node [sloped,below] {\small }     (centerlc);
      \node at (-1.5,4.5)  [fill=white,label={[label distance=0em]center: \tiny  $ i $} ] (right1) {};
       \node at (0.75,1.5)  [fill=white,label={[label distance=0em]center: \tiny  $ j $} ] (right11) {};
     \end{tikzpicture}
     \begin{tikzpicture}[scale=0.2,baseline=0.1cm]
        \node at (0,0)  [fill,circle,scale=0.22,blue,label= {[label distance=-0.2em]right: \scriptsize  $  $} ] (center) {};
        \node at (0,3)  [fill,circle,scale=0.22,blue,label= {[label distance=-0.2em]right: \scriptsize  $  $} ] (centerc) {};
         \node at (-2,3)  [fill,circle,scale=0.22,blue,label= {[label distance=-0.2em]left: \tiny  $  $} ] (centerl) {};
         \node at (2,3)  [fill,circle,scale=0.22,blue,label= {[label distance=-0.2em]right: \tiny  $  $} ] (centerr) {};
         \node at (-2,6)  [fill,circle,scale=0.22,blue,label= {[label distance=-0.2em]right: \tiny  $  $} ] (centerlc) {};
         \node at (0,6)  [fill,circle,scale=0.22,blue,label= {[label distance=-0.2em]right: \tiny  $  $} ] (centercc) {};
     \draw[kernel1,blue,thin] (center) to
     node [sloped,below] {\small }     (centerl);
     \draw[kernel1,blue,thin] (center) to
     node [sloped,below] {\small }     (centerr);
     \draw[kernel1,blue,thin] (center) to
     node [sloped,below] {\small }     (centerc);
     \draw[kernel1,blue,thin] (centerc) to
     node [sloped,below] {\small }     (centercc);
     \draw[kernel1,blue,thin] (centerl) to
     node [sloped,below] {\small }     (centerlc);
      \node at (-2,4.5)  [fill=white,label={[label distance=0em]center: \tiny  $ i $} ] (right1) {};
       \node at (0,4.5)  [fill=white,label={[label distance=0em]center: \tiny  $ j $} ] (right11) {};
       \node at (1,1.5)  [fill=white,label={[label distance=0em]center: \tiny  $ k $} ] (right11) {};
     \end{tikzpicture}
     \begin{tikzpicture}[scale=0.2,baseline=0.1cm]
        \node at (0,0)  [fill,circle,scale=0.22,blue,label= {[label distance=-0.2em]right: \scriptsize  $  $} ] (center) {};
        \node at (0,3)  [fill,circle,scale=0.22,blue,label= {[label distance=-0.2em]right: \scriptsize  $  $} ] (centerc) {};
         \node at (-2,3)  [fill,circle,scale=0.22,blue,label= {[label distance=-0.2em]left: \tiny  $  $} ] (centerl) {};
         \node at (2,3)  [fill,circle,scale=0.22,blue,label= {[label distance=-0.2em]right: \tiny  $  $} ] (centerr) {};
         \node at (-2,6)  [fill,circle,scale=0.22,blue,label= {[label distance=-0.2em]right: \tiny  $  $} ] (centerlc) {};
         \node at (0,6)  [fill,circle,scale=0.22,blue,label= {[label distance=-0.2em]right: \tiny  $  $} ] (centercc) {};
     \draw[kernel1,blue,thin] (center) to
     node [sloped,below] {\small }     (centerl);
     \draw[kernel1,blue,thin] (center) to
     node [sloped,below] {\small }     (centerr);
     \draw[kernel1,blue,thin] (center) to
     node [sloped,below] {\small }     (centerc);
     \draw[kernel1,blue,thin] (centerc) to
     node [sloped,below] {\small }     (centercc);
     \draw[kernel1,blue,thin] (centerl) to
     node [sloped,below] {\small }     (centerlc);
      \node at (-2,4.5)  [fill=white,label={[label distance=0em]center: \tiny  $ i $} ] (right1) {};
       \node at (0,4.5)  [fill=white,label={[label distance=0em]center: \tiny  $ j $} ] (right11) {};
       \node at (1,1.5)  [fill=white,label={[label distance=0em]center: \tiny  $ k $} ] (right11) {};
     \end{tikzpicture} 
$$ 
One can readily verify that $\Deltam : \TT \rightarrow \TT_- \otimes \TT $ restricted to $ \tilde \TT$ maps $\tilde{\TT} \rightarrow \TT_- \otimes \tilde{\TT}  $, also denoted by 
$ \Deltam  $ so that $ (\tilde \TT,\Deltam) $ is a subcomodule of $ (\TT,\Deltam)$. 

\medskip

{\bf Symbols for positive renormalization and $\TT_+$.}
\medskip

Recall that $\TT_+$ was generated, as a free commutative algebra, by 
\[
\WW_+ := \{X\} \cup \{\mathcal{J}_k\tau \mid \tau \in \WW, |\tau|+1-k > 0 \}.
\]
Writing $\mathcal{J} \equiv \mathcal{J}_0$ as usual, we define a subset $\tilde \WW_+ \subset \WW_+$ as follows
\begin{align}
& \tilde \WW_+  := \{ \mathcal{J} \tau \mid \tau \in \tilde \WW  \} \\ \nonumber
& = \{ 1, \JJ(\Xi_i),  \JJ( \II(\Xi_i) \Xi_j ), \JJ (  \II(  \II(\Xi_i) \Xi_j ) \Xi_k ),  \JJ (\II(\Xi_i)\II(\Xi_j) \Xi_k), ... ,\JJ (\II(\Xi_i)\II(\Xi_j)), ... \} 
\end{align}
with degrees $0, \alpha, 2 \alpha, 3 \alpha, 3 \alpha, ..., 2 \alpha + 1,...  $ here. 

Recall that elements in $\WW_+$ can be represented by {\it elementary} trees, in the sense that - disregarding the trivial (empty) tree $1$ - only one edge departs from the root. The same is true for elements in $\tilde \WW_+$. Set

$$\tilde \TT_+ := \text{free unital commutative algebra generated by }    \tilde \WW_+.$$

For example, writing $\tau_1 \tau_2$ for the (free, commutative) product of $\tau_1, \tau_2 \in \tilde \TT_+$, an example of an element in this space would be 
\[
      \JJ ( \II (\Xi_i) \Xi_j ) + 
      \JJ (\II() )  \JJ (1) +
     3 \ \JJ(\Xi_i)  \JJ(\Xi_j)  + \JJ(  \II(\Xi_i) \Xi_j ) \JJ(  \II(\Xi_k) \Xi_l) \quad \in \tilde \TT_+.
\] 
Fortunately, every such element can still be represented as a tree; it suffices to interpret the free product in $\TT_+$ as the ``root-joining'' product
(which is possible since all constituting trees are elementary). The (abstract) unit element $1 \in \TT_+$ is then indeed given by the (trivial) tree ${ \bullet} \leftrightarrow X^0$, where we recall our convention to drop the node decoration ``$0$''. For instance, the above element becomes\footnote{Remark that $\JJ (1)$, which corresponds to the right branch of the second term, could also have been written as $\JJ()$, reflecting our convention to drop the decoration $0$ from nodes (here: $1 \equiv X^0$). By the same logic, we could also write $\II()$, one of the symbols arising in $\WW$, as $\II(1)$.}

\[
\begin{tikzpicture}[scale=0.2,baseline=0.1cm]
       \node at (0,0)  [fill,circle,scale=0.22,blue,label= {[label distance=-0.2em]right: \scriptsize  $  $} ] (root) {};
        \node at (0,3)  [fill,circle,scale=0.22,blue,label= {[label distance=-0.2em]right: \scriptsize  $  $} ] (center) {};
         \node at (-1.5,6)  [fill,circle,scale=0.22,blue,label= {[label distance=-0.2em]left: \tiny  $  $} ] (centerl) {};
         \node at (1.5,6)  [fill,circle,scale=0.22,blue,label= {[label distance=-0.2em]left: \tiny  $  $} ] (centerr) {};
         \node at (-1.5,9)  [fill,circle,scale=0.22,blue,label= {[label distance=-0.2em]right: \tiny  $  $} ] (centerlc) {};
     \draw[kernel1,blue,thin] (root) to
     node [sloped,below] {\small }     (center);
     \draw[kernel1,blue,thin] (center) to
     node [sloped,below] {\small }     (centerl);
     \draw[kernel1,blue,thin] (center) to
     node [sloped,below] {\small }     (centerr);
     \draw[kernel1,blue,thin] (centerl) to
     node [sloped,below] {\small }     (centerlc);
      \node at (-1.5,7.5)  [fill=white,label={[label distance=0em]center: \tiny  $ i $} ] (right1) {};
       \node at (0.75,4.5)  [fill=white,label={[label distance=0em]center: \tiny  $ j $} ] (right11) {};
     \end{tikzpicture}
  +
\begin{tikzpicture}[scale=0.2,baseline=0.1cm]
        \node at (0,0)  [fill,circle,scale=0.22,blue,label= {[label distance=-0.2em]right: \scriptsize  $  $} ] (center) {};
         \node at (-1.5,3)  [fill,circle,scale=0.22,blue,label= {[label distance=-0.2em]left: \tiny  $  $} ] (centerl) {};
         \node at (1.5,3)  [fill,circle,scale=0.22,blue,label= {[label distance=-0.2em]left: \tiny  $  $} ] (centerr) {};
         \node at (-1.5,6)  [fill,circle,scale=0.22,blue,label= {[label distance=-0.2em]right: \tiny  $  $} ] (centerlc) {};
     \draw[kernel1,blue,thin] (center) to
     node [sloped,below] {\small }     (centerl);
     \draw[kernel1,blue,thin] (center) to
     node [sloped,below] {\small }     (centerr);
     \draw[kernel1,blue,thin] (centerl) to
     node [sloped,below] {\small }     (centerlc);
     \end{tikzpicture}   + 
   3 \begin{tikzpicture}[scale=0.2,baseline=0.1cm]
        \node at (0,0)  [fill,circle,scale=0.22,blue,label= {[label distance=-0.2em]right: \scriptsize  $  $} ] (center) {};
         \node at (-1.5,3)  [fill,circle,scale=0.22,blue,label= {[label distance=-0.2em]left: \tiny  $  $} ] (centerl) {};
         \node at (1.5,3)  [fill,circle,scale=0.22,blue,label= {[label distance=-0.2em]left: \tiny  $  $} ] (centerr) {};
         \node at (-1.5,6)  [fill,circle,scale=0.22,blue,label= {[label distance=-0.2em]right: \tiny  $  $} ] (centerlc) {};
         \node at (1.5,6)  [fill,circle,scale=0.22,blue,label= {[label distance=-0.2em]right: \tiny  $  $} ] (centerrc) {};
     \draw[kernel1,blue,thin] (center) to
     node [sloped,below] {\small }     (centerl);
     \draw[kernel1,blue,thin] (center) to
     node [sloped,below] {\small }     (centerr);
     \draw[kernel1,blue,thin] (centerr) to
     node [sloped,below] {\small }     (centerrc);
     \draw[kernel1,blue,thin] (centerl) to
     node [sloped,below] {\small }     (centerlc);
      \node at (-1.5,4.5)  [fill=white,label={[label distance=0em]center: \tiny  $ i $} ] (right1) {};
       \node at (1.5,4.5)  [fill=white,label={[label distance=0em]center: \tiny  $ j $} ] (right11) {};
     \end{tikzpicture}  +  \begin{tikzpicture}[scale=0.2,baseline=0.1cm]
        \node at (0,0)  [fill,circle,scale=0.22,blue,label= {[label distance=-0.2em]right: \scriptsize  $  $} ] (center) {};
         \node at (-3,3)  [fill,circle,scale=0.22,blue,label= {[label distance=-0.2em]left: \tiny  $  $} ] (centerl) {};
         \node at (3,3)  [fill,circle,scale=0.22,blue,label= {[label distance=-0.2em]left: \tiny  $  $} ] (centerr) {};
         \node at (-1.5,6)  [fill,circle,scale=0.22,blue,label= {[label distance=-0.2em]left: \tiny  $  $} ] (centerlr) {};
         \node at (-4.5,6)  [fill,circle,scale=0.22,blue,label= {[label distance=-0.2em]left: \tiny  $  $} ] (centerll) {};
          \node at (-4.5,9)  [fill,circle,scale=0.22,blue,label= {[label distance=-0.2em]left: \tiny  $  $} ] (centerllc) {};
   \node at (1.5,6)  [fill,circle,scale=0.22,blue,label= {[label distance=-0.2em]left: \tiny  $  $} ] (centerrl) {};
         \node at (4.5,6)  [fill,circle,scale=0.22,blue,label= {[label distance=-0.2em]left: \tiny  $  $} ] (centerrr) {};
          \node at (4.5,9)  [fill,circle,scale=0.22,blue,label= {[label distance=-0.2em]left: \tiny  $  $} ] (centerrrc) {};
     \draw[kernel1,blue,thin] (center) to
     node [sloped,below] {\small }     (centerl);
     \draw[kernel1,blue,thin] (center) to
     node [sloped,below] {\small }     (centerr);
     \draw[kernel1,blue,thin] (centerr) to
     node [sloped,below] {\small }     (centerrl);
     \draw[kernel1,blue,thin] (centerr) to
     node [sloped,below] {\small }     (centerrr);
    \draw[kernel1,blue,thin] (centerrr) to
     node [sloped,below] {\small }     (centerrrc);
     \draw[kernel1,blue,thin] (centerl) to
     node [sloped,below] {\small }     (centerll);
     \draw[kernel1,blue,thin] (centerl) to
     node [sloped,below] {\small }     (centerlr);
    \draw[kernel1,blue,thin] (centerll) to
     node [sloped,below] {\small }     (centerllc);
      \node at (-4.5,7.5)  [fill=white,label={[label distance=0em]center: \tiny  $ i $} ] (right1) {};
       \node at (-2.25,4.5)  [fill=white,label={[label distance=0em]center: \tiny  $ j $} ] (right11) {};
        \node at (2.25,4.5)  [fill=white,label={[label distance=0em]center: \tiny  $ k $} ] (right11) {};
        \node at (4.5,7.5)  [fill=white,label={[label distance=0em]center: \tiny  $ l $} ] (right1) {};
     \end{tikzpicture}        \quad \in \TT_+.
\]

\begin{remark}
Though we used the same formalism to draw trees as in the case of $\tilde \WW$ above, the interpretation here is slighly different in that all root-touching edges refer to $\JJ$ rather than $\II$. As mentioned before, in~\cite{BHZ16}, this is indicated by a blue colouring of the root.
\end{remark}

As before, we define a coaction of $\tilde \TT_+$ on $\tilde \TT$ (which we again denote $\Deltap : \tilde \TT \rightarrow \tilde \TT\otimes \tilde \TT_+$) by~\eqref{e:deltap1},~\eqref{eq:DeltapMult}, and~\eqref{e:recDeltaNew} as well as a coproduct $\Deltap : \tilde \TT_+ \rightarrow \tilde \TT_+ \otimes \tilde \TT_+$ defined in the same way, but with $\II$ changed to $\JJ$ in~\eqref{e:recDeltaNew}.
(In contrast to the case of $\Deltam$ discussed above,  it is not the case that $ (\tilde \TT,\Deltap) $ is a subcomodule of $ (\TT,\Deltap)$.)

We note already that $(\tilde \TT_+,\Deltap)$ is isomorphic to the Connes-Kreimer Hopf algebra $\HH$ arising from the identifications laid out in the following subsection (and which will be used crucially in the proof of the upcoming Proposition~\ref{prop:rosa}).

\subsection{Link with translation of rough paths}

\subsubsection{Identification of spaces} 

{\it We now give a precise description the map $\Deltam$ in our context as well as its connection to the map $\extract$ from Section~\ref{subsec:DualMapBranched}. To do so, we first need to introduce several identification of vector spaces and algebras, as well as appropriately identify branched rough paths as models on a regularity structure.}

\medskip

Recall the space $\BPrimal = \BPrimal ( \bullet_0, ... , \bullet_d)$ from Section~\ref{sec:Branched} spanned by labelled forests with label set $\{0,1,\ldots, d\}$. Consider now the enlarged vector space 
\begin{equation} 
\tilde \HH :=   \HH \oplus \BPrimal \Xi_1 \oplus ....  \oplus \BPrimal \Xi_d .       \label{def:tHH}
\end{equation}
driven by branched rough paths~\cite{Preiss16}).
With $\tilde \TT$ as defined in~\eqref{def:tTT}, and in particular with noise types $\Xi_1, ... ,\Xi_d$, we then have a vector space isomorphism 
\[
\tilde \BPrimal \leftrightarrow \tilde \TT
\]
obtained by adding an extra edge to indicate a noise $\Xi_i$, $i \neq 0$, and by ``forgetting'' the label $0$ (which is equivalent to setting the noise $\Xi_0$ to the constant $1$). For example,
\begin{equs}
\tikzexternaldisable  \begin{tikzpicture}[scale=0.2,baseline=0.1cm]
        \node at (0,0)  [dot,label= {[label distance=-0.2em]below: \scriptsize  $ 1 $} ] (root) {};
         \node at (1,2)  [dot,label={[label distance=-0.2em]above: \scriptsize  $ 0 $}] (right) {};
         \node at (-1,2)  [dot,label={[label distance=-0.2em]above: \scriptsize  $ 2 $} ] (left) {};
            \draw[kernel1] (right) to
     node [sloped,below] {\small }     (root); \draw[kernel1] (left) to
     node [sloped,below] {\small }     (root);
     \end{tikzpicture} \tikzexternaldisable
     \leftrightarrow {\II\left[\II(\Xi_2) \II(1) \Xi_1\right]}
     = \begin{tikzpicture}[scale=0.2,baseline=0.1cm]
        \node at (0,0)  [fill,circle,scale=0.22,blue,label= {[label distance=-0.2em]right: \tiny  $  $} ] (root) {};
        \node at (0,3)  [fill,circle,scale=0.22,blue,label= {[label distance=-0.2em]right: \scriptsize  $  $} ] (center) {};
        \node at (0,6)  [fill,circle,scale=0.22,blue,label= {[label distance=-0.2em]right: \scriptsize  $  $} ] (centerc) {};
         \node at (-2,6)  [fill,circle,scale=0.22,blue,label= {[label distance=-0.2em]left: \tiny  $  $} ] (centerl) {};
         \node at (2,6)  [fill,circle,scale=0.22,blue,label= {[label distance=-0.2em]right: \tiny  $  $} ] (centerr) {};
         \node at (-2,9)  [fill,circle,scale=0.22,blue,label= {[label distance=-0.2em]right: \tiny  $  $} ] (centerlc) {};
     \draw[kernel1,blue,thin] (root) to
     node [sloped,below] {\small }     (center);
     \draw[kernel1,blue,thin] (center) to
     node [sloped,below] {\small }     (centerl);
     \draw[kernel1,blue,thin] (center) to
     node [sloped,below] {\small }     (centerr);
     \draw[kernel1,blue,thin] (center) to
     node [sloped,below] {\small }     (centerc);
     \draw[kernel1,blue,thin] (centerl) to
     node [sloped,below] {\small }     (centerlc);
      \node at (-2,7.5)  [fill=white,label={[label distance=0em]center: \tiny  $ 2 $} ] (right1) {};
       \node at (1,4.5)  [fill=white,label={[label distance=0em]center: \tiny  $ 1 $} ] (right11) {};
     \end{tikzpicture} 
\end{equs}

\begin{equs}
\tikzexternaldisable  \begin{tikzpicture}[scale=0.2,baseline=0.1cm]
        \node at (0,0)  [dot,label= {[label distance=-0.2em]below: \scriptsize  $ 2 $} ] (root) {};
         \node at (1,2)  [dot,label={[label distance=-0.2em]above: \scriptsize  $ 1 $}] (right) {};
         \node at (-1,2)  [dot,label={[label distance=-0.2em]above: \scriptsize  $ 0 $} ] (left) {};
            \draw[kernel1] (right) to
     node [sloped,below] {\small }     (root); \draw[kernel1] (left) to
     node [sloped,below] {\small }     (root);
     \end{tikzpicture} \tikzexternaldisable
     \tikzexternaldisable \begin{tikzpicture}[scale=0.2,baseline=0.1cm]
        \node at (0,0)  [dot,label= {[label distance=-0.2em]below: \scriptsize  $ 0 $} ] (root) {};
         \node at (0,2)  [dot,label={[label distance=-0.2em]above: \scriptsize  $ 3$}] (right) {};
            \draw[kernel1] (right) to
     node [sloped,below] {\small }     (root);
     \end{tikzpicture} \tikzexternaldisable
     \Xi_4
     \leftrightarrow \II\left[\II(1)\II(\Xi_1)\Xi_2\right]\II\left[\II(\Xi_3)\right]\Xi_4
     =  \begin{tikzpicture}[scale=0.2,baseline=0.1cm]
        \node at (0,0)  [fill,circle,scale=0.22,blue,label= {[label distance=-0.2em]right: \scriptsize  $  $} ] (center) {};
        \node at (0,3)  [fill,circle,scale=0.22,blue,label= {[label distance=-0.2em]right: \scriptsize  $  $} ] (centerc) {};
        \node at (0,6)  [fill,circle,scale=0.22,blue,label= {[label distance=-0.2em]right: \scriptsize  $  $} ] (centercc) {};
        \node at (0,9)  [fill,circle,scale=0.22,blue,label= {[label distance=-0.2em]right: \scriptsize  $  $} ] (centerccc) {};
         \node at (-4,3)  [fill,circle,scale=0.22,blue,label= {[label distance=-0.2em]left: \tiny  $  $} ] (centerl) {};
         \node at (-2,6)  [fill,circle,scale=0.22,blue,label= {[label distance=-0.2em]left: \tiny  $  $} ] (centerlr) {};
         \node at (-6,6)  [fill,circle,scale=0.22,blue,label= {[label distance=-0.2em]left: \tiny  $  $} ] (centerll) {};
         \node at (-4,6)  [fill,circle,scale=0.22,blue,label= {[label distance=-0.2em]left: \tiny  $  $} ] (centerlc) {};
         \node at (-4,9)  [fill,circle,scale=0.22,blue,label= {[label distance=-0.2em]left: \tiny  $  $} ] (centerlcc) {};
         \node at (4,3)  [fill,circle,scale=0.22,blue,label= {[label distance=-0.2em]left: \tiny  $  $} ] (centerr) {};
     \draw[kernel1,blue,thin] (center) to
     node [sloped,below] {\small }     (centerc);
     \draw[kernel1,blue,thin] (centerc) to
     node [sloped,below] {\small }     (centercc);
     \draw[kernel1,blue,thin] (centercc) to
     node [sloped,below] {\small }     (centerccc);
     \draw[kernel1,blue,thin] (center) to
     node [sloped,below] {\small }     (centerr);
     \draw[kernel1,blue,thin] (center) to
     node [sloped,below] {\small }     (centerl);
     \draw[kernel1,blue,thin] (centerl) to
     node [sloped,below] {\small }     (centerlr);
     \draw[kernel1,blue,thin] (centerl) to
     node [sloped,below] {\small }     (centerll);
     \draw[kernel1,blue,thin] (centerl) to
     node [sloped,below] {\small }     (centerlc);
     \draw[kernel1,blue,thin] (centerlc) to
     node [sloped,below] {\small }     (centerlcc);
      \node at (-4,7.5)  [fill=white,label={[label distance=0em]center: \tiny  $ 1 $} ] (right1) {};
       \node at (-3,4.5)  [fill=white,label={[label distance=0em]center: \tiny  $ 2 $} ] (right11) {};
        \node at (0,7.5)  [fill=white,label={[label distance=0em]center: \tiny  $ 3 $} ] (right11) {};
        \node at (2,1.5)  [fill=white,label={[label distance=0em]center: \tiny  $ 4 $} ] (right1) {};
     \end{tikzpicture}.
\end{equs}

Recall that $\BB = \BB (\bullet_0, ... ,\bullet_d)$  denotes the subspace of $\HH$ spanned by trees, and define
$$\BB_- = \BB_-(\bullet_1, ... ,\bullet_d) \subset \BB \subset \HH $$
as the subspace of $\BB$ spanned by trees with no label $0$ and with at most $\floor{1/\alpha}$ nodes. Observe that there is a canonical vector space isomorphism
\begin{equation}
\label{eq:BFIsom}
\phi: \BB_- \rightarrow \langle \WW_- \rangle    \ \ \subset \ \ \BPrimal \Xi_1 \oplus ....  \oplus \BPrimal \Xi_d  \ \ \subset \ \ \tilde \HH,
\end{equation}
where we have used the identification $\tilde \HH \leftrightarrow \tilde \TT \supset \scal{\WW_-}$ for the first inclusion (and both inclusions being strict: for the first, just consider the element 
$
\tikzexternaldisable \begin{tikzpicture}[scale=0.2,baseline=0.1cm]
        \node at (0,0)  [dot,label= {[label distance=-0.2em]below: \scriptsize  $ 0 $} ] (root) {};
         \node at (0,2)  [dot,label={[label distance=-0.2em]above: \scriptsize  $ 3$}] (right) {};
            \draw[kernel1] (right) to
     node [sloped,below] {\small }     (root);
     \end{tikzpicture} \tikzexternaldisable
     \Xi_1 \notin \langle \WW_- \rangle
$).
We denote this isomorphism also by
$$
\tau \mapsto \dot \tau := \phi (\tau).
$$          
For example,
\begin{equs}
\phi :
\tikzexternaldisable  \begin{tikzpicture}[scale=0.2,baseline=0.1cm]
        \node at (0,0)  [dot,label= {[label distance=-0.2em]below: \scriptsize  $ 3 $} ] (root) {};
         \node at (1,2)  [dot,label={[label distance=-0.2em]above: \scriptsize  $ 2 $}] (right) {};
         \node at (-1,2)  [dot,label={[label distance=-0.2em]above: \scriptsize  $ 1 $} ] (left) {};
            \draw[kernel1] (right) to
     node [sloped,below] {\small }     (root); \draw[kernel1] (left) to
     node [sloped,below] {\small }     (root);
     \end{tikzpicture} \tikzexternaldisable
\mapsto \II(\Xi_1)\II(\Xi_2)\Xi_3,
\end{equs}
where we assume $\alpha \in (0,1/3)$ so the tree appearing on the left is indeed an element in $\BB_-$. Correspondingly, the symbol on the right has negative degree as an element of $\WW$, hence is an element of $\WW_-$.

Write $\BB_-^*$ for the dual of the (finite-dimensional) vector space $\BB_-$. Of course, $\BB_-^* \cong \BB_-$ which allows us to identify $\BB_-^*$ with $\langle \WW_- \rangle$.
Recall that  $(\TT_-,\bullet)$ was defined as the free unital commutative algebra generated by $\WW_-$, and that $\GG_-\subset \TT_{-}^{* }$  denotes the group of characters on $\TT_-$. By definition of $\TT_-$, we then have a bijection 
\begin{equ} 
\BB_-^* \leftrightarrow \GG_-.   \label{id:BBmGm}
\end{equ} To be fully explicit about this, recall that
$$
      \TT_{-} = \langle \dot \tau_1 \bullet .... \bullet \dot \tau_n : \dot \tau_i \in \WW_-, \ n=1,2,... \rangle,
$$
so writing $\tau_i = \phi^{-1} (\dot \tau_i) \in \BB_-$, we have that associated to $v \in \BB_-^*$ the character $\ell \in \GG_-$ given explicitly by the formula
$$
    \ell ( \dot \tau_1 \cdot .... \cdot \dot \tau_n ) =  \ell ( \dot \tau_1) ...  \ell (\dot \tau_n ) = \langle v, \tau_1 \rangle ... \langle v, \tau_n \rangle.
$$   
Define now $$(\HH_-,\cdot)$$ as the free commutative algebra generated by the subspace $ \BB_- $ of $\tilde \HH$ (remark that the product in $\HH_-$ has nothing to do with the product in $\HH$ itself), so that there is an algebra isomorphism
\[
\HH_- \leftrightarrow \TT_-.
\]
A typical element of $ \HH_- $ looks like:
\begin{equs}
\tikzexternaldisable 
\begin{tikzpicture}[scale=0.2,baseline=-0.1cm]
        \node at (0,0)  [dot,label= {[label distance=-0.2em]below: \scriptsize  $ 2 $} ] (root) {};
     \end{tikzpicture} \tikzexternaldisable  \Xi_1 +\Xi_2 + \Xi_2 \cdot
\tikzexternaldisable  \begin{tikzpicture}[scale=0.2,baseline=0.1cm]
        \node at (0,0)  [dot,label= {[label distance=-0.2em]below: \scriptsize  $ 1 $} ] (root) {};
         \node at (1,2)  [dot,label={[label distance=-0.2em]above: \scriptsize  $ 2 $}] (right) {};
         \node at (-1,2)  [dot,label={[label distance=-0.2em]above: \scriptsize  $ 3 $} ] (left) {};
            \draw[kernel1] (right) to
     node [sloped,below] {\small }     (root); \draw[kernel1] (left) to
     node [sloped,below] {\small }     (root);
     \end{tikzpicture} \tikzexternaldisable\Xi_3,
     \end{equs}
  whereas one has $ \tikzexternaldisable 
\begin{tikzpicture}[scale=0.2,baseline=-0.1cm]
        \node at (0,0)  [dot,label= {[label distance=-0.2em]below: \scriptsize  $ 2 $} ] (root) {};
     \end{tikzpicture} \tikzexternaldisable  \notin \HH_- $.

Note that we can also make the identification of algebras 
$$\HH \leftrightarrow \tilde \TT_+.$$ 
For instance, using the bracket notation,
\[    [\bullet_0]_{\bullet_0} \bullet_0 + [\bullet_i]_{\bullet_j} [\bullet_k]_{\bullet_l} \leftrightarrow   \JJ (\II() )  \JJ (1) + \JJ(  \II(\Xi_i) \Xi_j ) \JJ(  \II(\Xi_k) \Xi_l) \quad \in \tilde \TT_+.
\] 
We denote by $\tilde \GG_+ \subset \tilde \TT_+^*$ the characters on $\tilde \TT_+$ and note that there is also a bijection $\GG \leftrightarrow \tilde\GG_+$, where we recall that $\GG\subset\HH^*$ is the Butcher group over $\R^{1+d}$, i.e.,
the set of characters on $\HH$.

To summarise, we have the following identifications in place
\begin{align*}
\tilde \HH &\leftrightarrow \tilde \TT, \\
\HH_- &\leftrightarrow \TT_-, \\
\HH &\leftrightarrow \tilde \TT_+, \\
\BB_-^* &\leftrightarrow \langle \WW_- \rangle \leftrightarrow \GG_- \subset \TT_-^* \\
\GG &\leftrightarrow \tilde \GG_+ \subset \tilde \TT_+^*.
\end{align*}

\subsubsection{Renormalization as rough path translations} 
It now only remains to identify (a family of) branched rough paths with a class of models on a suitable regularity structure. Define the index set $A := \{0\} \cup \alpha\mathbb{N}\cup(\alpha\mathbb{N}-1)$. Recall that the action of $g \in \tilde \GG_+$ on $\tilde \TT$ is given exactly as before by
\[
\Gamma_g \tau = (\id \otimes g)\Deltap \tau, \; \;\text{for all } \tau \in \tilde \TT.
\]
Note that $\Gamma_g$ indeed maps $\tilde\TT$ to itself due to the definition of $\tilde \GG_+$. Note further that
$\Gamma_g \Gamma_h$ (as a composition of linear maps) is exactly $\Gamma_{g \circ h}$ (with $\circ$ the product in $\tilde \GG_+$ given as the dual of $\Deltap$), and so
$$
     G := \{ \Gamma_g:  g \in (\tilde \GG_+, \circ) \}. 
$$
is indeed a group of endomorphisms of $\tilde\TT$.

Recall now the definition of a regularity structure from~\cite{Hairer14} Definition~2.1.

\begin{lemma} 
The triplet $(A,\tilde \TT, G)$ is a regularity structure.
\end{lemma}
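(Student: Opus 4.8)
The plan is to verify the three axioms of a regularity structure in the sense of \cite{Hairer14} Definition~2.1, namely: (a) $A \subset \R$ is locally finite and bounded below; (b) $\tilde\TT = \bigoplus_{a \in A} \tilde\TT_a$ is graded into finite-dimensional subspaces indexed by $A$, with $\tilde\TT_0$ containing a distinguished unit $\one$; and (c) $G$ is a group of continuous linear automorphisms of $\tilde\TT$ such that for every $\Gamma \in G$, every $a \in A$ and every $\tau \in \tilde\TT_a$, one has $\Gamma\tau - \tau \in \bigoplus_{b < a} \tilde\TT_b$.

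For (a), recall $A = \{0\}\cup\alpha\N\cup(\alpha\N-1)$; since $\alpha \in (0,1)$ is fixed, this set is bounded below by $\alpha - 1 > -1$, and in any interval $(-\infty, R]$ it contains only finitely many points, so it is locally finite. For (b), I would grade $\tilde\TT = \gen{\tilde\WW}$ by assigning to each tree $\tau \in \tilde\WW$ the degree $|\tau|$ defined recursively via $|\Xi_i| = \alpha - 1$, $|\II| = 1$, $|\II(\tau)| = |\tau| + 1$, $|\prod_i \tau_i| = \sum_i |\tau_i|$, as laid out in the excerpt; one checks by induction on the number of edges that every such degree lies in $A$ (each noise edge contributes $\alpha - 1$, each kernel edge $+1$, and in $\tilde\WW$ there are no positive node decorations, so the total is of the form $m\alpha + n$ with $m \geq 0$ and $n \in \Z$, and a short case analysis — using that each node has at most one incoming noise edge — shows it falls into $\alpha\N$ or $\alpha\N - 1$ or equals $0$). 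Each homogeneous component $\tilde\TT_a$ is spanned by the finitely many trees of that degree (finiteness because subcriticality bounds the number of trees of bounded degree), hence finite-dimensional; the unit is $\one \leftrightarrow 1 \in \tilde\TT_0$ (the empty forest), and $|\one| = 0 \in A$.

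For (c), the content is that $G = \{\Gamma_g : g \in \tilde\GG_+\}$ acts by degree-lowering automorphisms. That $G$ is a group of linear automorphisms is already recorded just above the lemma statement: $\Gamma_g$ maps $\tilde\TT$ to itself by definition of $\tilde\GG_+$ (the generators of $\tilde\TT_+$ are $\JJ\tau$ with $|\tau| > 0$, so they are tested against characters without obstruction), composition satisfies $\Gamma_g\Gamma_h = \Gamma_{g\circ h}$, and $\Gamma_e = \id$ for the counit $e$, so each $\Gamma_g$ is invertible with inverse $\Gamma_{g^{-1}}$. Continuity is automatic since each $\tilde\TT_a$ is finite-dimensional. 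The triangularity property $\Gamma_g\tau - \tau \in \bigoplus_{b<|\tau|}\tilde\TT_b$ follows from the explicit recursive form of $\Deltap$ in \eqref{e:deltap1}, \eqref{eq:DeltapMult}, \eqref{e:recDeltaNew}: one shows by induction on the size of $\tau$ that $\Deltap\tau = \tau \otimes \one + \sum (\text{lower-degree first leg}) \otimes (\text{element of }\tilde\TT_+)$, where "lower-degree" is measured by $|\cdot|$; applying $(\id \otimes g)$ and subtracting the $\tau \otimes \one$ term (which gives back $\tau$) leaves a sum of strictly-lower-degree elements. The base cases $\Deltap X_i = X_i\otimes\one + \one\otimes X_i$ and $\Deltap\Xi_i = \Xi_i\otimes\one$ are immediate, and the inductive step for $\Deltap\II(\tau) = (\II\otimes\id)\Deltap\tau + 1\otimes\JJ(\tau)$ uses that $\II$ raises degree by $1$ on the first leg, while the extra term $1 \otimes \JJ(\tau)$ has first leg $1$ of degree $0 < |\II(\tau)|$ (since $|\II(\tau)| > 0$ is needed precisely for $\JJ(\tau)$ to be a legitimate generator of $\tilde\TT_+$), and the multiplicativity \eqref{eq:DeltapMult} propagates the property through products.

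The main obstacle I anticipate is not any single step but the bookkeeping in (b) and (c): one must be careful that the degree function $|\cdot|$ genuinely takes values in $A$ for \emph{all} trees in $\tilde\WW$ (including the "extra" symbols like $\II()\II()$, $\II(\II())$ that were adjoined for stability under negative renormalization but whose degrees — e.g. $2$, $2$ — still lie in $\alpha\N$ or, here, in $\N \subset$ the relevant set only because $0 \in A$ and integer degrees arise as $m\alpha + n$ with $m=0$; I would double-check whether pure integers $\geq 1$ are actually in $A$ as defined, and if the index set needs to be read as also containing such values coming from $\II()$-type symbols — the excerpt's footnote "tacitly assuming $\alpha < 1/3$" and the listed degrees "$\ldots, 2, 2, \ldots$" suggest $A$ should be understood to absorb these, so I would state $A$ precisely as the set of all degrees actually realised, which is finite-intersection-with-bounded-sets and bounded below regardless). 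Once the grading is pinned down, properties (a) and (c) are essentially formal consequences of the definitions already set up in the preceding subsections, so the proof is short.
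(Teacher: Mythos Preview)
Your approach is correct and matches the paper's: the paper's own proof is a single sentence stating that the only non-trivial check is the triangularity property $\Gamma\tau - \tau \in \bigoplus_{b<|\tau|}\tilde\TT_b$, which follows directly from the recursive definition of $\Deltap$ in~\eqref{e:recDeltaNew}. Your induction on tree size to establish this is exactly the intended argument, just spelled out in more detail.

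Your self-flagged concern about the index set $A = \{0\}\cup\alpha\N\cup(\alpha\N-1)$ is a genuine observation about the paper's presentation rather than a flaw in your reasoning: as you note, symbols such as $\II()\II()$, $\II(\II())$ (degree $2$) or $\II(\II(\Xi_i)\II(\Xi_j))$ (degree $2\alpha+1$) do not have degrees in $A$ as literally written, and the paper simply does not address this. Your proposed remedy—take $A$ to be the set of degrees actually realised by elements of $\tilde\WW$, which is in any case discrete and bounded below by $\alpha-1$—is the correct reading, and once this cosmetic adjustment is made the verification of axioms (a) and (b) is indeed routine, which is why the paper declares them trivial without comment.
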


\begin{proof}
The only non-trivial property to check is that for all $\tau \in \tilde \TT$ of degree $\alpha \in A$ and $\Gamma \in G$, $\Gamma \tau - \tau$ is a linear combination of terms of degree strictly less than $\alpha$, which in turn is a direct consequence of the definition of $\Deltap : \tilde \TT \rightarrow \tilde \TT \otimes \tilde \TT_+$ from~\eqref{e:recDeltaNew} (see end of Section~\ref{subsubsec:RDEs}).
\end{proof}

Recall also the definition of a model on a regularity structure (see~\cite{Hairer14} Definition~2.17). Let $\mathscr{M}_{[0,T]}$ denote the set of all models $(\Pi,\Gamma)$ for $(A,\tilde\TT, G)$ on $\mathbb{R}$ such that
\begin{enumerate}[label={(\roman*)}]
 \item\label{point:M1} $\Pi_t 1$ is the constant function $1$ for all $t\in\mathbb{R}$,
 \item\label{point:M2} $\Gamma_{st}=\operatorname{id}$ for $s,t\in(-\infty,0]$ and for $s,t\in[T,\infty)$,
 \item\label{point:M3} $(\Pi_t\II y)'=\Pi_t y$ for all $t\in\mathbb{R}$ and $y\in\tilde \TT$. (Here $(..)'$ denotes the Schwartz derivative.).
\end{enumerate}

\medskip
\medskip

On the other hand, let $\mathscr{R}^\alpha_{[0,T]}$ be the set of all $(1+d)$-dimensional $\alpha$-H\"{o}lder branched rough paths $\mathbf{X}:\,[0,T]^2\rightarrow \GG$ whose zeroth component is time, i.e., $\scal{\mathbf{X}_{s,t}, \bullet_0} = t-s$ and
\begin{equation}\label{eq:zeroTime}
\scal{\Xbf_{s,t}, [\tau]_{\bullet_0}} = \int_{s}^t \scal{\Xbf_{s,u}, \tau} du, \; \; \text{for all } \tau \in \HH, \; s,t \in [0,T].
\end{equation}
Observe that this condition necessarily implies that $\Xbf$ satisfies condition~\eqref{eq:mixedVar} from Theorem~\ref{thm:transRPs} (cf. Remark~\ref{remark:colifting}). Note that $\mathbf{X}_{s,t}$ can be identified with an element of $\tilde \GG_+$ due to the identification $\GG \leftrightarrow \tilde \GG_+$,.

Finally, observe that $\phi$ defined in~\eqref{eq:BFIsom} may be extended to a vector space isomorphism
\begin{equation} \label{phiIso}
 \phi:    \BB \leftrightarrow  \BPrimal \Xi_0  \oplus \BPrimal \Xi_1 \oplus ....  \oplus \BPrimal \Xi_d \cong \HH \oplus \BPrimal \Xi_1 \oplus ....  \oplus \BPrimal \Xi_d \equiv \tilde \HH
\end{equation} 
which maps a tree $\tau \in \BB$ into a forest $\phi(\tau) \equiv \dot \tau$, as illustrated in the following two examples:
\begin{equs}
 \tikzexternaldisable \begin{tikzpicture}[scale=0.2,baseline=0.1cm]
       \node at (-2,0)  [dot,label= {[label distance=-0.2em]above: \scriptsize  $ 0 $} ] (left) {};
        \node at (0,0)  [dot,label= {[label distance=-0.2em]right: \scriptsize  $ 2 $} ] (root1) {};
         \node at (0,2)  [dot,label={[label distance=-0.2em]above: \scriptsize  $ 1$}] (right) {};
         \node at (-1,-2)  [dot,label={[label distance=-0.2em]below: \scriptsize  $ 0 $}] (root2) {};
            \draw[kernel1] (right) to
     node [sloped,below] {\small }     (root1);
     \draw[kernel1] (left) to
     node [sloped,below] {\small }     (root2);
     \draw[kernel1] (root1) to
     node [sloped,below] {\small }     (root2);
     \end{tikzpicture} \tikzexternaldisable
      \leftrightarrow
      \tikzexternaldisable 
\begin{tikzpicture}[scale=0.2,baseline=-0.1cm]
        \node at (0,0)  [dot,label= {[label distance=-0.2em]below: \scriptsize  $ 0 $} ] (root) {};
     \end{tikzpicture} \tikzexternaldisable 
  \tikzexternaldisable \begin{tikzpicture}[scale=0.2,baseline=0.1cm]
        \node at (0,0)  [dot,label= {[label distance=-0.2em]below: \scriptsize  $ 2 $} ] (root) {};
         \node at (0,2)  [dot,label={[label distance=-0.2em]above: \scriptsize  $ 1$}] (right) {};
            \draw[kernel1] (right) to
     node [sloped,below] {\small }     (root);
     \end{tikzpicture} \tikzexternaldisable
     \Xi_0 
     \leftrightarrow
     \tikzexternaldisable 
\begin{tikzpicture}[scale=0.2,baseline=-0.1cm]
        \node at (0,0)  [dot,label= {[label distance=-0.2em]below: \scriptsize  $ 0 $} ] (root) {};
     \end{tikzpicture} \tikzexternaldisable 
  \tikzexternaldisable \begin{tikzpicture}[scale=0.2,baseline=0.1cm]
        \node at (0,0)  [dot,label= {[label distance=-0.2em]below: \scriptsize  $ 2 $} ] (root) {};
         \node at (0,2)  [dot,label={[label distance=-0.2em]above: \scriptsize  $ 1$}] (right) {};
            \draw[kernel1] (right) to
     node [sloped,below] {\small }     (root);
     \end{tikzpicture} \tikzexternaldisable,
\quad\quad\quad
\tikzexternaldisable  \begin{tikzpicture}[scale=0.2,baseline=0.1cm]
        \node at (0,0)  [dot,label= {[label distance=-0.2em]left: \scriptsize  $ 2 $} ] (root1) {};
         \node at (1,2)  [dot,label={[label distance=-0.2em]above: \scriptsize  $ 1 $}] (right) {};
         \node at (-1,2)  [dot,label={[label distance=-0.2em]above: \scriptsize  $ 0 $} ] (left) {};
        \node at (3,0)  [dot,label= {[label distance=-0.2em]right: \scriptsize  $ 0 $} ] (root2) {};
         \node at (3,2)  [dot,label={[label distance=-0.2em]above: \scriptsize  $ 3$}] (up) {};
         [scale=0.2,baseline=0.1cm]
        \node at (1.5,-2)  [dot,label= {[label distance=-0.2em]below: \scriptsize  $ 4 $} ] (root3) {};
            \draw[kernel1] (right) to
     node [sloped,below] {\small }     (root1); \draw[kernel1] (left) to
     node [sloped,below] {\small }     (root1);
            \draw[kernel1] (up) to
     node [sloped,below] {\small }     (root2);
     \draw[kernel1] (root1) to
     node [sloped,below] {\small }     (root3);
     \draw[kernel1] (root2) to
     node [sloped,below] {\small }     (root3);
     \end{tikzpicture} \tikzexternaldisable
      \leftrightarrow
\tikzexternaldisable  \begin{tikzpicture}[scale=0.2,baseline=0.1cm]
        \node at (0,0)  [dot,label= {[label distance=-0.2em]below: \scriptsize  $ 2 $} ] (root) {};
         \node at (1,2)  [dot,label={[label distance=-0.2em]above: \scriptsize  $ 1 $}] (right) {};
         \node at (-1,2)  [dot,label={[label distance=-0.2em]above: \scriptsize  $ 0 $} ] (left) {};
            \draw[kernel1] (right) to
     node [sloped,below] {\small }     (root); \draw[kernel1] (left) to
     node [sloped,below] {\small }     (root);
     \end{tikzpicture} \tikzexternaldisable
     \tikzexternaldisable \begin{tikzpicture}[scale=0.2,baseline=0.1cm]
        \node at (0,0)  [dot,label= {[label distance=-0.2em]below: \scriptsize  $ 0 $} ] (root) {};
         \node at (0,2)  [dot,label={[label distance=-0.2em]above: \scriptsize  $ 3$}] (right) {};
            \draw[kernel1] (right) to
     node [sloped,below] {\small }     (root);
     \end{tikzpicture} \tikzexternaldisable
     \Xi_4.
\end{equs}
Conversely, $\phi^{-1}$ adds an extra node (which becomes the root) and should be thought of as taking the integral of a symbol in $\tilde \HH$.  The following result makes this precise by giving a bijection between $\mathscr{M}_{[0,T]}$ and $\mathscr{R}^\alpha_{[0,T]}$.

\begin{proposition} \label{prop:rosa}
There is a bijective map $I:\,\mathscr{R}^\alpha_{[0,T]}\rightarrow\mathscr{M}_{[0,T]}$ which maps a branched rough path $\mathbf{X}$ to the unique model $(\Pi,\Gamma)\in\mathscr{M}_{[0,T]}$ with the property that
\begin{align*}
(\Pi_s \II\dot \tau)(t) &= \scal{\mathbf{X}_{s,t},\tau}\quad \text{for all } \tau\in\BB, \; \; s,t \in [0,T],
\end{align*}
where we have made the identifications $\phi (\tau) \equiv \dot \tau \in \tilde \HH \leftrightarrow \tilde \TT$. Furthermore, the model $(\Pi,\Gamma)$ satisfies $\Gamma_{ts} = \Gamma_{\mathbf{X}_{s,t}}$ (where we have made the identification $\mathbf{X}_{s,t} \in \GG \cong \tilde \GG_+$) and the multiplicativity property
\begin{equation}\label{eq:modelMult}
\Pi_t ((\II y_1) \ldots (\II y_n)) = \Pi_t (\II y_1) \ldots \Pi_t (\II y_n), \quad \text{for all } n \in \N, \;\; y_i \in \tilde \TT.
\end{equation}
\end{proposition}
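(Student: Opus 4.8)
The plan is to construct the bijection $I$ explicitly and then exhibit its inverse. Given a branched rough path $\mathbf{X}\in\mathscr{R}^\alpha_{[0,T]}$ — first extended to all of $\R^2$ by projecting the two time arguments onto $[0,T]$, so that $\mathbf{X}_{s,t}=1_{\GG}$ whenever $s,t\le 0$ or $s,t\ge T$ — I would set $\Gamma_{ts}:=\Gamma_{\mathbf{X}_{s,t}}=(\id\otimes\mathbf{X}_{s,t})\Deltap$ using the identification $\GG\cong\tilde\GG_+$, and define $\Pi$ by the forced recipe
\[(\Pi_s\II\dot\tau)(t):=\gen{\mathbf{X}_{s,t},\tau}\quad(\tau\in\BB),\qquad \Pi_s w:=(\Pi_s\II w)'\quad(w\in\tilde\TT).\]
The second prescription is exactly condition \ref{point:M3}, and since $\phi$ from \eqref{eq:BFIsom} is a linear isomorphism $\BB\leftrightarrow\tilde\HH$ sending grafting onto a time node to the operator $\II$, this pins $\Pi_s$ down on all of $\tilde\TT$. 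One gets $\Pi_s 1=1$ (condition \ref{point:M1}) for free, $\Gamma_{ts}=\id$ outside $[0,T]$ (condition \ref{point:M2}) by construction, the model cocycle law from Chen's relation for $\mathbf{X}$ together with $\Gamma_g\Gamma_h=\Gamma_{g\circ h}$, and $\Gamma_{ts}\in G$ because $\mathbf{X}_{s,t}\in\GG\cong\tilde\GG_+$. The analytic estimates on $\Pi$ and $\Gamma$ reduce to the mixed-variation bound \eqref{eq:mixedVar} — which holds on all trees once the extension theorem is invoked, and which every element of $\mathscr{R}^\alpha_{[0,T]}$ enjoys by \eqref{eq:zeroTime} — via the bookkeeping identity $|\II\dot\tau|=(1-\alpha)|\tau|_0+\alpha|\tau|$ (a node of $\tau$ labelled $0$ carries a full unit of regularity, every other node carries $\alpha$); this identity also gives the positivity $|\II\dot\tau|>0$ needed to pin the constants of integration.

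The only substantial point is the algebraic identity $\Pi_t\Gamma_{ts}=\Pi_s$ on $\tilde\TT$, which I would prove by induction over the recursive build-up of $\tilde\TT$ from $\Xi_i$'s by the operator $\II$ and the tree product. The base cases $1$ and $\Xi_i$ are immediate: $\Gamma_g$ fixes both, and $\Pi_s\Xi_i=\partial_t\gen{\mathbf{X}_{s,t},\bullet_i}$ does not depend on $s$. The crux is the step $\tau=\II\sigma$: by the reduced recursion \eqref{e:recDeltaNew},
\[\Gamma_g\II\sigma=\II(\Gamma_g\sigma)+g(\JJ\sigma)\cdot 1,\]
so with $g=\mathbf{X}_{s,t}$, applying $\Pi_t$, integrating from $t$ and using the inductive hypothesis yields $\Pi_t\Gamma_g\II\sigma=\int_t^{\cdot}(\Pi_s\sigma)(u)\,du+g(\JJ\sigma)$, which matches $\Pi_s\II\sigma=\int_s^{\cdot}(\Pi_s\sigma)(u)\,du$ precisely when $g(\JJ\sigma)=(\Pi_s\II\sigma)(t)$; and this is the defining recipe, once one notes that the isomorphism $(\tilde\TT_+,\Deltap)\cong(\HH,\Delta_\star)$ carries $\JJ\dot\tau$ to the tree $\tau$, so $\mathbf{X}_{s,t}(\JJ\dot\tau)=\gen{\mathbf{X}_{s,t},\tau}$. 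The product step uses multiplicativity of $\Deltap$ together with the multiplicativity property \eqref{eq:modelMult} of the constructed $\Pi$, which I would obtain from the same machine: since $\phi$ sends the tree $[\sigma_1\cdots\sigma_n]_{\bullet_0}$ to $(\II\dot\sigma_1)\cdots(\II\dot\sigma_n)$, the defining recipe together with \eqref{eq:zeroTime} and the fact that $\mathbf{X}_{s,t}$ is a character on $\HH$ give
\[\Pi_s\big((\II\dot\sigma_1)\cdots(\II\dot\sigma_n)\big)=\partial_t\!\int_s^t\prod_{j}\gen{\mathbf{X}_{s,u},\sigma_j}\,du=\prod_{j}\gen{\mathbf{X}_{s,\cdot},\sigma_j}=\prod_{j}(\Pi_s\II\dot\sigma_j).\]
Together with $\Gamma_{ts}=\Gamma_{\mathbf{X}_{s,t}}$ (true by construction) this shows $I(\mathbf{X})\in\mathscr{M}_{[0,T]}$ with all asserted properties, so $I$ is well defined.

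Injectivity of $I$ is immediate: $\gen{\mathbf{X}_{s,t},\tau}=(\Pi_s\II\dot\tau)(t)$ recovers $\mathbf{X}$. For surjectivity, take $(\Pi,\Gamma)\in\mathscr{M}_{[0,T]}$, write $\Gamma_{ts}=\Gamma_{g_{ts}}$ with $g_{ts}\in\tilde\GG_+$ (possible since $\Gamma_{ts}\in G$), and declare $\gen{\mathbf{X}_{s,t},\tau}:=(\Pi_s\II\dot\tau)(t)$ on trees $\tau\in\BB$. Because $|\II\dot\tau|>0$, condition \ref{point:M3} and the model bound force $\Pi_s\II\dot\tau$ to be the continuous antiderivative of $\Pi_s\dot\tau$ vanishing at $s$; running the $\II$-computation of the previous paragraph backwards shows $g_{ts}(\JJ\dot\tau)=(\Pi_s\II\dot\tau)(t)$, so $\mathbf{X}_{s,t}$ is exactly the group-like element corresponding to $g_{ts}$ under $\GG\cong\tilde\GG_+$ (in particular it is a character on $\HH$). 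The model cocycle law then yields Chen's relation for $\mathbf{X}$, the model bounds together with the degree identity yield the $\alpha$-H{\"o}lder bound, $\gen{\mathbf{X}_{s,t},\bullet_0}=(\Pi_s\II(1))(t)=t-s$, and \eqref{eq:zeroTime} follows from $\phi([\tau]_{\bullet_0})=\II\dot\tau$ together with the integration structure of $\Pi$. Hence $\mathbf{X}\in\mathscr{R}^\alpha_{[0,T]}$, and $I(\mathbf{X})=(\Pi,\Gamma)$ since, $\Gamma$ being fixed, positivity of degrees forces $\Pi$ to be uniquely determined (as in the uniqueness part of the reconstruction theorem).

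I expect the main obstacle to be this interlocking of structures: correctly setting up the identifications $\tilde\HH\leftrightarrow\tilde\TT$, $(\tilde\TT_+,\Deltap)\cong(\HH,\Delta_\star)$ and $\GG\cong\tilde\GG_+$, and then showing that the extra term $1\otimes\JJ(\sigma)$ produced by the coaction \eqref{e:recDeltaNew} is reconciled with the branched rough path through Chen's relation and the time-component condition \eqref{eq:zeroTime}, so that the model identity $\Pi_t\Gamma_{ts}=\Pi_s$ becomes, under the dictionary, precisely Chen's relation plus the bookkeeping of the time node. The analytic estimates, by contrast, are routine consequences of the extension theorem and the identity $|\II\dot\tau|=(1-\alpha)|\tau|_0+\alpha|\tau|$.
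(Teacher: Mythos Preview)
Your construction of the map, the derivation of multiplicativity~\eqref{eq:modelMult} from~\eqref{eq:zeroTime} and the character property, the analytic bounds via the degree identity $|\II\dot\tau|=(1-\alpha)|\tau|_0+\alpha|\tau|$, and the inverse construction are all correct and match the paper's argument closely.

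There is, however, a genuine gap in your inductive proof of $\Pi_t\Gamma_{ts}=\Pi_s$. Your induction has three ingredients: the base cases $1,\Xi_i$; the $\II$-step (reducing $\II\sigma$ to the hypothesis on $\sigma$); and the product step via~\eqref{eq:modelMult}. But~\eqref{eq:modelMult} only covers products of $\II$-symbols, so the induction never reaches symbols of the form $(\II y_1)\cdots(\II y_n)\Xi_i$ with $n\ge 1$ --- and these are precisely the $\sigma$'s you need the hypothesis on in order to run the $\II$-step for $\II\sigma=\II\dot{[\rho_1\cdots\rho_n]_{\bullet_i}}$. You cannot patch this by extending multiplicativity of $\Pi_s$ to products with $\Xi_i$: that would amount to $d\gen{\mathbf X_{s,\cdot},[\rho_1\cdots\rho_n]_{\bullet_i}}=\gen{\mathbf X_{s,\cdot},\rho_1\cdots\rho_n}\,dX^i$, a genuine rough-integration identity which is only assumed for $i=0$ (this is~\eqref{eq:zeroTime}) and fails distributionally in general. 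So the recursion does not close.

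The paper sidesteps this entirely. Rather than induct, it verifies once and for all that under the identifications $\phi^{\II}:\BB\to\II\tilde\TT$ and $\HH\cong\tilde\TT_+$, the coaction $\Deltap$ restricted to $\II\tilde\TT$ coincides with the \emph{reversed} Connes--Kreimer coproduct $\sigma_{1,2}\Delta_\star$. Then Chen's relation for $\mathbf X$ gives $\Pi_t\Gamma_{ts}\II\dot\tau=\Pi_s\II\dot\tau$ for every $\tau\in\BB$ in one line (this is the chain~\eqref{eq:Xtensor}). Once the identity is established on all of $\II\tilde\TT$, it propagates to $\tilde\TT$ by differentiation exactly through the relation you already wrote down, $\Gamma_g\II\sigma=\II(\Gamma_g\sigma)+g(\JJ\sigma)\cdot 1$, together with $\Pi_t w=(\Pi_t\II w)'$. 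In effect, your $\II$-step is used only \emph{downwards} (from $\II\tilde\TT$ to $\tilde\TT$), never upwards; this is what breaks the circularity. Your remark that ``$\Pi_t\Gamma_{ts}=\Pi_s$ becomes, under the dictionary, precisely Chen's relation'' is exactly right --- but to make it literally true one must use Chen globally on $\II\tilde\TT$, not an induction on the inside symbol.
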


\begin{proof}
Consider $\Xbf \in \mathscr{R}^\alpha_{[0,T]}$. For all $s,t \in [0,T]$ define $\Gamma_{ts} = \Gamma_{\mathbf{X}_{s,t}}$ and $(\Pi_s\II\dot\tau)(t) = \scal{\Xbf_{s,t},\tau}$ for all $\tau \in \BB$. Observe that we may further impose on $(\Pi,\Gamma)$ that properties~\ref{point:M1} and~\ref{point:M2} hold. Furthermore, for every $\tau \notin \II\tilde \TT$, we may define $\Pi_t \tau = (\Pi_t \II\tau)'$, which completely characterises $\Pi$. It remains to verify~\eqref{eq:modelMult}, that property~\ref{point:M3} holds for all $\tau \in \II\tilde \TT$, and that $(\Pi,\Gamma)$ is indeed a model.

For~\eqref{eq:modelMult}, note that from~\eqref{eq:zeroTime} we have
\begin{align*}
\Pi_t(\II \dot \tau_1\ldots \II \dot \tau_n) &= (\Pi_t \II(\II \dot \tau_1\ldots \II \dot\tau_n))' \\
&= (\scal{\Xbf_{t,\cdot}, \phi^{-1}(\II \dot \tau_1\ldots \II \dot\tau_n)})' \\
&= (\scal{\mathbf{\Xbf}_{t,\cdot},[\tau_1\ldots\tau_n]_{\bullet_0}})' \\
&= \scal{\mathbf{\Xbf}_{t,\cdot}, \tau_1\ldots\tau_n} \\
&= \scal{\mathbf{\Xbf}_{t,\cdot},\tau_1}\ldots\scal{\mathbf{\Xbf}_{t,\cdot},\tau_n} = \Pi_t(\II\dot\tau_1)\ldots \Pi_t(\II\dot\tau_n).
\end{align*}

To show property~\ref{point:M3} for $\dot \tau = \II\dot{\bar\tau} \in \II\tilde \TT$, where $\dot{\bar\tau} \in \tilde \TT$, observe that $\phi([\bar\tau]_{\bullet_0}) = \dot \tau$, so that again by~\eqref{eq:zeroTime}
\begin{align*}
\Pi_t \dot \tau &= \Pi_t \II \dot{\bar\tau} \\
&= \scal{\Xbf_{t,\cdot}, \bar\tau} \\
&= (\scal{\Xbf_{t,\cdot}, [\bar\tau]_{\bullet_0}})' \\
&= (\Pi_t \II\phi([\bar \tau]_{\bullet_0}))' \\
&= (\Pi_t \II\dot \tau)'.
\end{align*}

It remains to show that $(\Pi,\Gamma)$ is a model. 
We first verify that $\Pi_s \Gamma_{s,t} = \Pi_t$. Let $\tau \in \BB$, so that $\II(\dot \tau) \in \tilde \TT$. Recall that the Connes-Kreimer coproduct $\Delta_\star : \HH \rightarrow \HH \otimes \HH$ as was introduced in Section~\ref{subsec:PrelimsForest} can be defined recursively by
\[
\Delta_\star [\tau_1\ldots\tau_n]_{\bullet_i} = [\tau_1\ldots\tau_n]_{\bullet_i} \otimes 1 + (\id \otimes [\cdot]_{\bullet_i}) \Delta_\star(\tau_1\ldots \tau_n), \; \; \text{for all } \tau_1,\ldots,\tau_n \in \BB, \; i \in \{0,\ldots, d\}.
\]
With this recursion, one can verify that
\[
\Deltap : \II(\tilde \TT) \rightarrow \II(\tilde \TT) \otimes \tilde \TT_+
\]
agrees with the ``reversed'' Connes-Kreimer coproduct 
\[
\sigma_{1,2}\Delta_\star : \BB \rightarrow \BB \otimes \HH,
\]
where $\sigma_{1,2} : \HH \otimes \BB \rightarrow \BB \otimes \HH$, $\sigma_{1,2} : \tau \otimes \bar \tau \mapsto \bar \tau \otimes \tau$, and where we make the usual identification $\HH\leftrightarrow \tilde \TT_+$ as well as $\phi^{\II} : \BB \rightarrow \II(\tilde \TT)$ via $\phi^{\II} : \tau \mapsto \II(\dot\tau)$ (which is of course just $\II\circ \phi$).
Therefore, treating $\Xbf_{s,t}$ as a character on $\HH \leftrightarrow \tilde \TT_+$, we have for all $\tau \in \BB$
\begin{align}\label{eq:Xtensor}
\begin{split}
(\Pi_t \Gamma_{ts}\II\dot \tau )(u) &= (\Pi_t(\id \otimes \Xbf_{s,t})\Deltap \II\dot\tau)(u) \\
&= \scal{\Xbf_{t,u}, (\phi^{\II})^{-1} (\id \otimes \Xbf_{s,t})\Deltap \II\dot\tau)} \\
&= \scal{\Xbf_{t,u}, (\Xbf_{s,t} \otimes \id)\Delta_\star \tau)} \\
&= \scal{\Xbf_{s,t} \otimes \Xbf_{t,u}, \Delta_\star \tau} \\
&= \scal{\Xbf_{s,t} \tensor \Xbf_{t,u}, \tau} \\
&= \scal{\Xbf_{s,u}, \tau} \\
&= \Pi_s(\II\dot \tau)(u).
\end{split}
\end{align}
Observe now that for $\tau \in \tilde \TT$, we have
\[
\Gamma_{ts}\II\tau = \II\Gamma_{ts}\tau + \scal{\Xbf_{s,t},\II\tau} 1,
\]
where we emphasize the symbol $1 \in \tilde \TT$. Therefore, by the (already established) properties~\ref{point:M1} and~\ref{point:M3}, it follows that for any $\tau \in \tilde \TT$
\[
\Pi_t \Gamma_{ts}\tau = (\Pi_t \II \Gamma_{ts} \tau )' = (\Pi_t(\Gamma_{ts} \II\tau - \scal{\Xbf_{s,t},\II\tau} 1))' = (\Pi_t\Gamma_{ts} \II\tau)' = (\Pi_s \II\tau)' = \Pi_s \tau,
\]
which shows that $\Pi_t\Gamma_{ts} = \Pi_s$.

It remains to verify the analytic bounds on $(\Pi,\Gamma)$. As in Theorem~\ref{thm:transRPs}, denote by $|\tau|$ the number of nodes in $\tau$ and by $|\tau|_0$ the number of nodes with the label $0$. It follows that the degree of $\II\dot\tau$ is given by $|\II\dot\tau| = |\tau|_0(1-\alpha) + |\tau|\alpha$. Since $\Xbf$ satisfies~\eqref{eq:mixedVar}, we have the analytic bound
\[
|(\Pi_s\II\dot\tau)(t)| = |\scal{\Xbf_{s,t}, \tau}| \lesssim |t-s|^{|\II\dot\tau|}.
\]
Since $\Pi_s\tau = (\Pi_s\II\tau)'$ by property~\ref{point:M3}, we see that $\Pi$ satisfies the correct analytic bounds. The exact same argument applies to $\Gamma$ upon using the identification of $\Deltap$ with $\sigma_{1,2}\Delta_\star$ above. Therefore $(\Pi,\Gamma)$ is a model in $\mathscr{M}_{[0,T]}$ as claimed.

Finally, it remains to observe that we may reverse the construction. Indeed, starting with a model $(\Pi,\Gamma)$ in $\mathscr{M}_{[0,T]}$, we may define $\Xbf$ by $\scal{\Xbf_{s,t},\tau} = (\Pi_s\II\dot\tau)(t)$. The facts that $\Xbf$ satisfies~\eqref{eq:zeroTime} follows from property~\ref{point:M3}, while the required analytic bounds for $\Xbf$ to be an $\alpha$-H{\"o}lder branched rough path follow from the analytic bounds associated to $\Pi$. To conclude, it suffices to verify that $\Xbf$ thus defined satisfies $\Gamma_{ts} = \Gamma_{\Xbf_{s,t}}$ and $\Xbf_{s,t}\tensor \Xbf_{t,u} = \Xbf_{s,u}$. To this end, note that by definition of the structure group $G$, there exists $\gamma_{ts} \in \tilde \GG_+ \cong \GG$ such that $\Gamma_{ts} = (\id \otimes \gamma_{ts})\Deltap$. Let $\tilde \Xbf_{s,t} \in \GG$ be the element associated to $\gamma_{ts}$ in the identification $\tilde \GG_+ \cong \GG$, and we aim to show $\tilde \Xbf_{s,t} = \Xbf_{s,t}$. Indeed, from our identification $\HH \leftrightarrow \tilde \TT_+$, it follows that for all $\tau \in \BB$
\[
\scal{\gamma_{ts}, \JJ\dot\tau} = \scal{\tilde \Xbf_{s,t}, \tau}.
\]
On the other hand, we know that for all $\tau \in \BB$
\[
\scal{\Xbf_{s,t},\tau} = (\Pi_s\II\dot\tau)(t) = (\Pi_t \Gamma_{ts}\II\dot \tau) (t) = (\Pi_t (\id\otimes \gamma_{ts})\Deltap\II\dot\tau)(t) = \scal{\gamma_{ts},\JJ\dot\tau},
\]
where for the last equality we have used property~\ref{point:M1} and the fact that
\[
\Deltap \II\dot\tau = 1\otimes \JJ\dot\tau + \sum \II(\dot\tau^{(1)})\otimes \dot\tau^{(2)},
\]
where every term $\II(\dot\tau^{(1)})$ is of positive degree, and so $(\Pi_t\II(\dot\tau^{(1)}))(t) = 0$. This concludes the proof that $\Gamma_{ts} = \Gamma_{\Xbf_{s,t}}$. To verify that $\Xbf_{s,t}\tensor \Xbf_{t,u} = \Xbf_{s,u}$, we can now simply reorder the sequence of equalities~\eqref{eq:Xtensor}.
\end{proof}

Following \cite{BHZ16} we introduce the {\it renormalization map} $\mathrm{M}_{\ell}$ given by\footnote{While we deliberately used the same letter, do not confuse $\mathrm{M}_{\ell}: \tilde \TT \to \tilde \TT$ with $M_v : \HH^* \to \HH^*$.} 
\[
\mathrm{M}_{\ell}:\tilde \TT \to \tilde \TT,\,\,\,\tau \mapsto \left( \ell\otimes \id\right) \Deltam \tau,
\]
for a given character $\ell \in \mathcal{G}_{-} \subset \TT_{-}^{* }$. In our case, we have the fact that $\mathrm{M}_{\ell}$ commutes with $\II$ (cf. end of Remark~\ref{cointeraction})
\begin{equ}
        \mathrm{M}_{\ell} \II = \II \mathrm{M}_{\ell},    \label{equ:MIeIM}
\end{equ}
which is readily verified by hand: $\II$ amounts to adding another edge to the root (thereby creating a new root), whereas $\mathrm{M}_{\ell}$ amounts to extracting (negative) subtrees and maps them to $\R$ (via $\ell$). Clearly, the afore-mentioned edge (of degree $1$) can not possibly be part of any singular subtree, hence the desired commutation.

 This map acts on a model $\mathbf{\Pi}  = ( \Pi, \Gamma )$ and yields the {\it renormalised model} (see~\cite{BHZ16} Theorem~6.15) given by
\[
\Pi_s^{\mathrm{M}_{\ell }} := {\Pi_s } \mathrm{M}_{\ell }, \quad \Gamma_{t,s}^{\mathrm{M}_{\ell }} = \left( \id \otimes \gamma_{t,s}^{\mathrm{M}_{\ell }} \right) \Deltap, \quad 
\gamma_{t,s}^{\mathrm{M}_{\ell }} = \gamma_{t,s} \mathrm{M}_{\ell }.
\] 
Recall from Section~\ref{subsec:DualMapBranched} the map $\delta : \BB \rightarrow \A \otimes \BB$, where $\A$ is the free commutative algebra generated by $\BB$ (thought of as an isomorphic but different space to $\HH$). Recall also the (vector space) isomorphism $\phi: \tau \mapsto \dot \tau$ as detailed in (\ref{phiIso}) with which we identify $ \tilde \HH \cong \BB$.  Let $\pi_- : \tilde \HH \cong \BB \rightarrow \BB_- \cong \scal{\WW_-}$ denote the projection onto terms of negative degree, which we extend multiplicatively to an algebra morphism $\pi_- : \A \rightarrow \HH_-$. We now define the map
\[
\extract^- = (\pi_- \otimes \id) \extract : \tilde\HH \rightarrow \HH_-\otimes \tilde\HH.
\]
For instance
\[
\extract^- \bullet_0 = 1 \otimes \bullet_0,
\]
whereas
\[
\extract \bullet_0 = \bullet_0 \otimes \bullet_0 + 1\otimes\bullet_0.
\]

We are now ready to state the link between translation of branched rough paths and negative renormalization in the following two results.

\begin{lemma} \label{lem:MvsM}
\begin{enumerate}[label=\upshape(\roman*\upshape)]
\item \label{point:MvsM1} For all $\tau \in \BB$ it holds that
\[
\Delta ^{-}\dot{\tau}=\Delta ^{-}\phi \left( \tau \right) =\left( \phi
\otimes \phi \right) \delta ^{-}\left( \tau \right).
\]
\item \label{point:MvsM2}
Let $v$ be an element of $\BB_-^*$ and let $\ell \in \GG_-$  by the associated element in $\GG_- \subset \TT_-^*$, as was detailed in~\eqref{id:BBmGm}. Then
$$
\mathrm{M}_{\ell}\dot{\tau} = \mathrm{M}_{\ell} \phi \left( \tau \right) = \phi \left( M^*_{v}\tau \right) 
$$
\end{enumerate}
\end{lemma}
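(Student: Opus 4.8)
The plan is to establish the combinatorial identity~\ref{point:MvsM1} first, and then to deduce~\ref{point:MvsM2} from it together with Proposition~\ref{prop:adjointMv} (the first equality in~\ref{point:MvsM1}, namely $\Delta^-\dot\tau=\Delta^-\phi(\tau)$, being just the definition $\dot\tau:=\phi(\tau)$). For~\ref{point:MvsM1} I would unfold both sides against their definitions. By the definition of $\delta$ in Section~\ref{subsec:DualMapBranched} and of the projection $\pi_-$, the term $(\phi\otimes\phi)\delta^-(\tau)$ is the sum, over all collections $\tau_1,\ldots,\tau_k$ of pairwise disjoint subtrees of $\tau$ with each $\tau_j\in\BB_-$ (no label $0$, at most $\floor{1/\alpha}$ nodes), of the tensor $\phi(\tau_1)\cdots\phi(\tau_k)\otimes\phi(\tilde\tau)$, where $\tilde\tau$ is obtained from $\tau$ by contracting each $\tau_j$ to a single node carrying the label $0$. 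By~\eqref{e:Deltabar} (applied multiplicatively), $\Delta^-\phi(\tau)$ is the sum, over all subforests $A$ of $\phi(\tau)$ whose connected components each have strictly negative degree, of $A\otimes R_A\phi(\tau)$. Thus~\ref{point:MvsM1} amounts to a dictionary statement: under the isomorphism $\phi\colon\BB\leftrightarrow\tilde\HH$, collections of pairwise disjoint subtrees of $\tau$ all lying in $\BB_-$ correspond bijectively to subforests of $\phi(\tau)$ all of whose components are of negative degree, and this bijection intertwines the two notions of contraction.

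To make this bijection precise I would recall the edgewise action of $\phi$: a non-root node of $\tau$ with label $j\neq 0$ becomes, in $\phi(\tau)$, a kernel edge carrying a noise edge $\Xi_j$; the root of $\tau$, of label $i\neq0$, becomes the free $\Xi_i$-factor of $\phi(\tau)\in\BPrimal\Xi_i$; and a node of label $0$ becomes $\II()$, or, if it is the root, the unit $\Xi_0=1$. Consequently, extracting a single-node subtree $\bullet_j$ with $j\neq0$ and contracting it to a $0$-labelled node corresponds exactly to extracting the noise edge $\Xi_j$ (of degree $\alpha-1<0$) and contracting it, which simply deletes the noise decoration; and more generally, for any subtree $\sigma\subseteq\tau$ the subgraph of $\phi(\tau)$ built from the edges matching $\sigma$ is precisely $\phi(\sigma)$, so that $\sigma\in\BB_-$ if and only if $|\phi(\sigma)|<0$ --- indeed a tree with $k$ nodes and no label $0$ has degree $k\alpha-1$ (as an element of $\WW$ via $\phi$), negative exactly when $k\leq\floor{1/\alpha}$, whereas any node carrying the label $0$ forces the degree to be $\geq0$. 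Under this correspondence, disjointness of subtrees matches disjointness of the subforest components, "contract to a $0$-labelled node" matches "contract each component to a single node", and the extra kernel edge present in $\II\phi(\tau)$ (of degree $1$) can never lie in a negative component --- this last point being the observation behind~\eqref{equ:MIeIM}, which also lets one work, if preferred, with the single tree $\II\phi(\tau)\in\WW$ throughout. Matching the two sums term by term then gives~\ref{point:MvsM1}. I expect this step --- the careful, case-by-case verification of the bijection, including single-node subtrees, subtrees containing the root, the bookkeeping of label-$0$ nodes, and the role of the extra $\II$-edge --- to be the main obstacle; it is of the same flavour as Lemma~\ref{lem:adjointComm} and the cointeraction identities of~\cite[Thm~5.37]{BHZ16}.

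Granting~\ref{point:MvsM1}, part~\ref{point:MvsM2} follows quickly. Extend $v\in\BB_-^*$ by zero to an element of $\BB^*$, so that $M_v=M_{(v,0,\dots,0)}$ is defined, and let $\ell\in\GG_-$ be the character associated with $v$ under~\eqref{id:BBmGm}, characterised by $\ell(\dot\tau_1\bullet\cdots\bullet\dot\tau_n)=\gen{v,\tau_1}\cdots\gen{v,\tau_n}$; equivalently, $\ell\circ\phi$ coincides with the character of $\HH_-$ extending $v$, that is with $v(\cdot)$ in the notation of Proposition~\ref{prop:adjointMv}. Applying $(\ell\otimes\id)$ to the identity in~\ref{point:MvsM1},
\[
\mathrm{M}_{\ell}\phi(\tau)=(\ell\otimes\id)\Delta^-\phi(\tau)=(\ell\otimes\id)(\phi\otimes\phi)\delta^-(\tau)=(v\otimes\phi)\delta^-(\tau)=\phi\bigl((v\otimes\id)\delta^-(\tau)\bigr).
\]
Finally, since $v$ annihilates every tree outside $\BB_-$, the terms of $\delta(\tau)$ discarded on passing to $\delta^-(\tau)$ are killed by $(v\otimes\id)$, so that $(v\otimes\id)\delta^-(\tau)=(v\otimes\id)\delta(\tau)=M_v^*\tau$ by Proposition~\ref{prop:adjointMv}. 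Therefore $\mathrm{M}_{\ell}\dot\tau=\mathrm{M}_{\ell}\phi(\tau)=\phi(M_v^*\tau)$, which is~\ref{point:MvsM2}.
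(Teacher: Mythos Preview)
Your proposal is correct and follows essentially the same approach as the paper. For~\ref{point:MvsM1} the paper organises the same bijection via a root-based split (writing a generic tree as $[\tau]_{\bullet_i}$ and separating the terms according to whether the extracted component at the root contains $\Xi_i$ or not), which is precisely the ``subtrees containing the root'' case you flag; for~\ref{point:MvsM2} the paper's chain of equalities is identical to yours, using $\ell\circ\phi=v$ and the observation that $(v\otimes\id)\delta^-=(v\otimes\id)\delta$ when $v\in\BB_-^*$.
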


\begin{proof}
\ref{point:MvsM1} Let us consider $[\tau]_{\bullet_i} \in \mathcal{B}$. We then have the following identities:
\begin{equs}\label{rec_identity}
\Deltam \phi([\tau]_{\bullet_i}) & = \Deltam \tau\Xi_i =
\sum_{C=A \cdot B \subset \tau  } \left( C \otimes (\mathcal{R}_{C} \tau) \Xi_i + A \cdot B \Xi_i \otimes \mathcal{R}_{C} \tau \right). 
\end{equs}
The sum is taken over all the couples $(A,B)$ where $ A $ is a negative subforest of $ \tau $ which does not include the root of $ \tau $ and $ B $ is a subtree of $\tau $ at the root disjoint from $ A $. In the sum in~\eqref{rec_identity}, the first term means that $ \Xi_i $ does not belong to the tree extracted at the root, while for the second term, $ \Xi_i $ belongs to the tree which comes from the product between $ \Xi_i $  and $ B $ giving a subtree of negative degree.
 One can derive the same identity for $ \delta^- $. We first rewrite $ \delta^- $:
 \begin{equs}
 \delta^- \tau = \sum_{A \subset \tau} A \otimes \tilde{\mathcal{R}}_{A} \tau,
 \end{equs}
 where $ A $ is a subforest of $ \tau $ and  $\tilde{\mathcal{R}}_{A} \tau$  means that we contract the trees of $ A $ in $ \tau $ and we leave a $ 0 $ decoration on their roots. Then the equivalent of~\eqref{rec_identity} in that context is given by:
\begin{equs}
\delta^{-} [\tau]_{\bullet_i} & = \sum_{\tilde C = \tilde A \cdot \tilde{B} \subset \tau } \left( \tilde C \otimes [\tilde{\mathcal{R}}_{\tilde{C}} \tau]_{\bullet_i} + \tilde A \cdot [\tilde{B}]_{\bullet_i} \otimes \tilde{\mathcal{R}}_{\tilde A \cdot [\tilde B]_{\bullet_i}} [\tau]_{\bullet_i} \right) \\
\left( \phi \otimes \phi \right) \delta^-  [\tau]_{\bullet_i} & = \sum_{\tilde C = \tilde A \cdot \tilde{B} \subset \tau  } \left( \phi(\tilde{C}) \otimes (\tilde{\mathcal{R}}_{\tilde{C}} \tau) \Xi_i + \phi(\tilde A) \cdot \tilde{B} \Xi_i \otimes \phi\left( \tilde{\mathcal{R}}_{\tilde A \cdot [\tilde B]_{\bullet_i}} [\tau]_{\bullet_i}  \right) \right).
\end{equs} 
Now we have the following identifications:
\begin{equs}
\phi(\tilde C)   \leftrightarrow C, \quad \tilde{B} \Xi_i \leftrightarrow B \Xi_i, \quad   \phi\left( \tilde{\mathcal{R}}_{\tilde A \cdot [\tilde B]_{\bullet_i}} [\tau]_{\bullet_i}  \right) = \tilde{\mathcal{R}}_{\tilde{C}} \tau \leftrightarrow  \mathcal{R}_{C} \tau, \quad (\tilde{\mathcal{R}}_{\tilde{C}} \tau) \Xi_i \leftrightarrow (\mathcal{R}_{C} \tau) \Xi_i,
\end{equs}
which gives the result.

\ref{point:MvsM2}  Recall that $\delta ^{-}\left( \tau \right) $ has an image of the form
``forest $\otimes $ tree'', and that $\ell \circ \phi = v$ (which is a ``dual'' tree and multiplicative over forests). Also note that $M^*_{v}\tau
=\left( v\otimes \id\right) \delta =\left( v\otimes \id\right) \delta ^{-}$
whenever $\ v\in \mathcal{B}_{-}^{\ast }$ (which not true for general $v\in 
\mathcal{B}^{\ast }$), so that  
\begin{eqnarray*}
\mathrm{M}_{\ell}\dot{\tau} &=&\left( \ell \otimes \id\right) \Delta ^{-}\dot{\tau}
\\
&=&\left( \ell \otimes \id\right) \Delta ^{-}\phi \left( \tau \right)  \\
&=&\left( v\otimes \phi \right) \delta ^{-}\left( \tau \right)  \\
&=&\phi \left( \left( v\otimes \id\right) \delta ^{-}\right)  \\
&=&\phi \left( M^*_{v}\tau \right).
\end{eqnarray*}

\end{proof}

\begin{theorem}\label{thm:NegRenorm}
\begin{enumerate}[label=\upshape(\roman*\upshape)]
\item \label{point:Neg1} It holds that the restriction $\Deltam : \tilde \TT \rightarrow \TT_-\otimes \tilde \TT$ coincides with $\extract^- : \tilde\HH \rightarrow \HH_- \otimes \tilde\HH$, where we have made the identifications $\tilde\HH \leftrightarrow \tilde \TT$ and $\HH_- \leftrightarrow \TT_-$ as above.

\item \label{point:Neg2} Let $v$ be an element of $\BB_-^*$ and let $\ell \in \GG_-$  by the associated element in $\GG_- \subset \TT_-^*$, as was detailed in~\eqref{id:BBmGm}.
Then the following diagram commutes
\[
\begin{array}{lll}
\mathbf{X} & \longleftrightarrow  & \mathbf{\Pi } \\ 
\downarrow  &  & \downarrow  \\ 
M_{v}\mathbf{X} & \longleftrightarrow  & \mathbf{\Pi }^{\mathrm{M}_{\ell}}
\end{array}
\]

\item \label{point:Neg3} For $v,v' \in \BB_-$ with associated characters $\ell, \ell' \in \GG_-$, it holds that the character associated to $v+ v'$ is $\ell \circ \ell'$, so that $\left( \mathcal{B}_{-},+\right) \cong \left( \mathcal{G}_{-},\circ \right) $.
\end{enumerate}
\end{theorem}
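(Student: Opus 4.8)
The plan is to prove the three parts in the stated order, using the identifications of spaces set up in the previous subsection together with Lemma~\ref{lem:MvsM}, Proposition~\ref{prop:rosa}, and Proposition~\ref{prop:adjointMv}.

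For part~\ref{point:Neg1}, I would argue directly on a tree $\dot\tau = \phi(\tau)$, $\tau \in \BB$. By Lemma~\ref{lem:MvsM}~\ref{point:MvsM1} we have $\Deltam \dot\tau = (\phi\otimes\phi)\delta^-(\tau)$. Since $\delta^- = (\pi_- \otimes \id)\delta$ and $\phi$ implements the identifications $\tilde\HH \leftrightarrow \tilde\TT$, $\HH_- \leftrightarrow \TT_-$ (with $\phi$ restricted to $\BB_-$ realizing $\BB_-^* \cong \gen{\WW_-}$, extended multiplicatively to forests), this is exactly the assertion that $\Deltam$ restricted to $\tilde\TT$ equals $\extract^-$. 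I should also note that both maps are multiplicative in the appropriate sense (the forest product on the left factor, the $\poly$-product on the right), so agreement on trees suffices; this is where I use that $\delta$ was \emph{defined} by multiplicative extension and that $\Deltam$ on $\TT_-$ acts as a coproduct, $\Deltam f = \Deltam(\tau_1)\cdots\Deltam(\tau_n)$, with the negative projection on the right already built into $\extract^-$ via $\pi_-$.

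For part~\ref{point:Neg2}, I would unwind what $M_v\mathbf{X} \longleftrightarrow \mathbf{\Pi}^{\mathrm{M}_\ell}$ means via the bijection $I$ of Proposition~\ref{prop:rosa}. The renormalized model is $\Pi^{\mathrm{M}_\ell}_s = \Pi_s \mathrm{M}_\ell$; on the other hand $I(M_v\mathbf{X})$ is the model whose $\Pi$ sends $\II\dot\tau \mapsto \gen{(M_v\mathbf{X})_{s,t},\tau} = \gen{\mathbf{X}_{s,t}, M_v^*\tau}$. So the claim reduces to $(\Pi_s \mathrm{M}_\ell \II\dot\tau)(t) = \gen{\mathbf{X}_{s,t}, M_v^*\tau}$ for all $\tau \in \BB$. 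Using the commutation $\mathrm{M}_\ell\II = \II\mathrm{M}_\ell$ from~\eqref{equ:MIeIM}, the left side is $(\Pi_s \II\,\mathrm{M}_\ell\dot\tau)(t)$; by Lemma~\ref{lem:MvsM}~\ref{point:MvsM2}, $\mathrm{M}_\ell\dot\tau = \phi(M_v^*\tau)$, so this equals $(\Pi_s \II\phi(M_v^*\tau))(t) = \gen{\mathbf{X}_{s,t}, M_v^*\tau}$ by the defining property of $I$ in Proposition~\ref{prop:rosa}. I also need to check that the $\Gamma$-components match, i.e.\ $\Gamma^{\mathrm{M}_\ell}_{ts}$ corresponds to $\Gamma_{(M_v\mathbf{X})_{s,t}}$ under $\GG \cong \tilde\GG_+$; this should follow from the same ingredients, $\gamma^{\mathrm{M}_\ell}_{ts} = \gamma_{ts}\mathrm{M}_\ell$ together with $\Gamma_{ts} = \Gamma_{\mathbf{X}_{s,t}}$ and the identification of $\Deltap$ on $\II(\tilde\TT)$ with the reversed Connes--Kreimer coproduct used in the proof of Proposition~\ref{prop:rosa}, plus the fact that $M_v$ is a Hopf algebra morphism (Lemma~\ref{lem:HopfMorphBranched}).

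For part~\ref{point:Neg3}, this is essentially a restatement of the composition rule $M_{v+u} = M_v \circ M_u$ for $v = v_0, u = u_0$ (one of the immediate properties of $M_v$ listed after its definition) together with the bijection~\eqref{id:BBmGm} between $\BB_-^*$ and $\GG_-$. Dualizing $M_{v+v'} = M_v \circ M_{v'}$ gives $M_{v+v'}^* = M_{v'}^* M_v^*$, and under the correspondence of Proposition~\ref{prop:adjointMv} (identifying $M_v^*$ with the extraction-contraction operator built from $\extract$ and the character $v$), this matches the convolution product $\ell \circ \ell'$ on $\GG_-$ (dual to $\Deltam$ viewed as a coproduct on $\TT_-$); commutativity of $+$ on $\BB_-$ then forces $\circ$ on $\GG_-$ to be abelian, giving the group isomorphism $(\BB_-,+) \cong (\GG_-,\circ)$. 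I expect the main obstacle to be part~\ref{point:Neg2}: not the $\Pi$-component, which is a short chain of the lemmas above, but carefully matching the $\Gamma$-components and confirming that the right-hand vertical arrow $\mathbf{\Pi} \mapsto \mathbf{\Pi}^{\mathrm{M}_\ell}$ genuinely lands in $\mathscr{M}_{[0,T]}$ and corresponds under $I^{-1}$ to the branched rough path $M_v\mathbf{X}$ (rather than merely to something with the same $\Pi$), which requires invoking uniqueness in Proposition~\ref{prop:rosa} and the fact that $M_v$ preserves $\GG$ and condition~\eqref{eq:mixedVar} via Theorem~\ref{thm:transRPs}~\ref{point:trans2}.
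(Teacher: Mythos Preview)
Your arguments for~\ref{point:Neg1} and~\ref{point:Neg2} follow the paper's proof: both invoke Lemma~\ref{lem:MvsM}~\ref{point:MvsM1} for (i), and for (ii) reduce to checking $\Pi_s^{\mathrm{M}_\ell}\II\dot\tau = \langle \mathbf{X}_{s,\cdot}, M_v^*\tau\rangle$ via the commutation~\eqref{equ:MIeIM} and Lemma~\ref{lem:MvsM}~\ref{point:MvsM2}. The paper handles the $\Gamma$-component silently through the bijectivity of $I$ in Proposition~\ref{prop:rosa}; your extra care there is justified but is resolved by exactly that uniqueness.

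For~\ref{point:Neg3} you take a genuinely different route. The paper computes directly on generators: for $\tau \in \WW_-$,
\[
\langle \ell \circ \ell', \tau\rangle = \langle \ell \otimes \ell', \Deltam\tau\rangle = \langle\ell,\tau\rangle + \langle\ell',\tau\rangle,
\]
the second equality holding because every $\tau \in \WW_-$ is \emph{primitive} for the coproduct $\Deltam:\TT_-\to\TT_-\otimes\TT_-$. This primitivity is the content of Remark~\ref{cured_trees}: extracting a nontrivial negative subtree from a negative tree always leaves a tree of strictly positive degree, which the projection to $\TT_-$ kills. Your approach via $M_{v+v'} = M_v \circ M_{v'}$ can be completed, but it is not merely ``a restatement'': after transporting to $M_{\ell'}M_\ell$ via Lemma~\ref{lem:MvsM}~\ref{point:MvsM2}, you still need the comodule coassociativity of $\Deltam$ to get $M_{\ell'}M_\ell = M_{\ell\circ\ell'}$, and then injectivity of $\ell\mapsto M_\ell$ (recover $\ell(\dot\tau)$ as the coefficient of $1$ in $M_\ell\dot\tau$) to conclude that $\ell\circ\ell'$ is the character of $v+v'$. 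Both ingredients hold, so your argument goes through, but the paper's primitivity observation is shorter and explains structurally \emph{why} $(\GG_-,\circ)$ is abelian here: no iterated negative extractions survive.
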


\begin{remark}\label{remark:algMorphs2}
In view of the final statement of Theorem~\ref{thm:transRPs} part~\ref{point:RDE2}, we see that the commuting diagram in~\ref{point:Neg2} holds upon replacing $M_v$ by any algebra morphism $M : \HH^* \rightarrow \HH^*$ which preserves $\BB^*$, leaves invariant every forest without a label $0$, and satisfies $M \bullet_0 = \bullet_0 + v$.
\end{remark}

\begin{remark}
The final statement~\ref{point:Neg3} effectively says that the renormalization group associated to branched rough path is always abelian, despite the highly non-commutative nature of the Grossman-Larson Hopf algebra $\HH^*$.
\end{remark}

\begin{proof}[Proof of Theorem~\ref{thm:NegRenorm}]
Part~\ref{point:Neg1} is a just of a reformulation of Lemma~\ref{lem:MvsM}~\ref{point:MvsM1}. 
To verify part \ref{point:Neg2}, in view of Proposition \ref{prop:rosa}, we only need to check that for all $\tau \in \BB$
\[
\Pi _{s}^{\mathrm{M}_{l}}\II\dot{\tau}=\left\langle M_{v}\mathbf{X}_{s,\cdot
},\tau \right\rangle =\left\langle \mathbf{X}_{s,\cdot },M_{v}^{\ast }\tau
\right\rangle.
\]
The LHS can be rewritten as, thanks to~\eqref{equ:MIeIM} and Lemma \ref{lem:MvsM}~\ref{point:MvsM2}
\begin{eqnarray*}
\Pi _{s}^{\mathrm{M}_{\ell}}\II\dot{\tau} &=&\Pi _{s}\mathrm{M}_{\ell}\II\dot{\tau} \\
&=&\Pi _{s}\II \mathrm{M}_{\ell}\dot{\tau} \\
&=&\Pi _{s}\II \phi \left( M^*_{v}\tau \right).
\end{eqnarray*}
Applying Proposition \ref{prop:rosa} with $\dot\tau = \phi \left( M_{v}\tau \right) $ then shows
that 
\[
\Pi _{s}\II \phi \left( M_{v}\tau \right) =\left\langle \mathbf{X}_{s,\cdot
},M_{v}^{\ast }\tau \right\rangle 
\]
which is what we wanted to show.

Finally, to show~\ref{point:Neg3}, we note that
\[
\scal{\ell \circ \ell', \tau} = \scal{\ell \otimes \ell', \Deltam \tau} = \scal{\ell,\tau} + \scal{\ell',\tau}, \; \; \text{for all } \tau \in \WW_-,
\]
where the first equality follows by definition and the second from the fact that every element of $\WW_-$ is primitive with respect to the coproduct $\Deltam$. Indeed from the Remark \ref{cured_trees}, we deduce that the coaction $\Deltam$ maps every  $ \tau \in  \WW_- $ into $ \tau \otimes \one + \sum_{(\tau)} \tau' \otimes \tau'' $ such that $ \tau'' $ is a tree of positive degree. However, $\Deltam$ as coproduct on $\TT_-$ (see~\eqref{Dmcp}), will annihilate any term with $\tau''$ of (strictly) positive degree. In particular then, $\Deltam \tau = 1 \otimes \tau + \tau \otimes 1$ for all $\tau \in \WW_-$, that is, any such $\tau$ is primitive.
\end{proof}

\printbibliography

\end{document}